\newtheorem{lemma}{Lemma}
\newtheorem{cor}{Corollary}
\newtheorem{conj}{Conjecture}
\newtheorem{definition}{Definition}
\newcommand{\recurfrac}{\bigg[\frac{n}{k_{s}+k_{r}}\bigg]}
\newcommand{\subalign}[1]{%
  \vcenter{%
    \Let@ \restore@math@cr \default@tag
    \baselineskip\fontdimen10 \scriptfont\tw@
    \advance\baselineskip\fontdimen12 \scriptfont\tw@
    \lineskip\thr@@\fontdimen8 \scriptfont\thr@@
    \lineskiplimit\lineskip
    \ialign{\hfil$\m@th\scriptstyle##$&$\m@th\scriptstyle{}##$\hfil\crcr
      #1\crcr
    }%
  }%
}
\title{On the Number of Distinct Tilings of Finite Subsets of $\mathbb{Z}^{d}$ With Tiles of Fixed Size}
\author{Jesse Stern}
\begin{document}

%\pagecolor{back}
%\color{white}

\maketitle

\begin{abstract}
In this work, we study the number of finite tiles $A\subset\mathbb{Z}^{d}$ of size $\alpha$ that can translationally tile a finite $C\subset\mathbb{Z}^{d}$. Under these restrictions the tile $A$ can be translated any number of times to cover exactly $C$, but cannot be rotated or reflected. Further we consider two tiles $A$ and $A'$ to be congruent (i.e. not distinct) if and only if one can be transformed into the other via some translation. We make several significant contributions to the study of this problem. For any $\alpha\in\mathbb{Z}^{+}$ and $C=[x_{1}]\times[x_{2}]\times\ldots [x_{d}]$ where $x_{1},\ldots,x_{d}\in\mathbb{Z}^{+}$ (which we refer to as a finite contiguous $C$), we classify exactly which $A$ of size $\alpha$ can tile $C$. More specifically, we give an efficient\footnote{By efficient, we mean polynomial time with respect to $|\mathcal{T}(\alpha,C)|$.} method for enumerating all elements of $\mathcal{T}(\alpha,C)$, where $(A,B)\in \mathcal{T}(\alpha,C)$ if and only if
\begin{multicols}{2}
\begin{enumerate}[itemsep=0mm]
\item $A,B\subset\mathbb{Z}^{d}$
\item $A+B=C$
\item $|A|=\alpha$
\item $|C|=\alpha|B|$,
\end{enumerate}
\end{multicols}
\noindent where we use $A+B$ to mean the Minkowsji sum of $A$ and $B$. Further, we assume $(A,B)$ is some canonical representative (to be formally defined later) of the class of all $(A',B')$ such that $A$ is congruent to $A'$. This classification of the elements of $\mathcal{T}(\alpha,C)$ also allows us to prove a partial order on $|\mathcal{T}(\alpha,C)|$ with respect to $\alpha$ for any finite contiguous $C$.

We then study the extremal question as to the the growth rate of $\text{max}_{\alpha,C}[|\mathcal{T}(\alpha,C)|]$ with respect to $|C|$. The trivial bounds for this value, even for restricted classes of $C$ (such as $C=[n]$ for $d=1$), are very poor, with a lower bound of roughly $\log n$ and an upper bound of ${n\choose n/2}$. We improve these bounds for finite contiguous $C$ to the upper bound of
\[n^{\frac{(1+\epsilon)\log n}{\log\log n}}\]
and an infinitely often super-polynomial lower bound. More specifically, the lower bound states that, for all constants $c$ and some infinite $N\subset\mathbb{Z}^{+}$,
\[\forall n\in N, \exists\alpha\in\mathbb{Z}^{+}(|\mathcal{T}(\alpha,C)|>n^{c}),\]
where $n=|C|$ and $C$ is both finite and contiguous. 

We conjecture that the number of tilings of any finite contiguous $C$ by tiles of size $\alpha$ is an upper bound on the number of tilings of any finite $C'\subset \mathbb{Z}^{d}$ by tiles of size $\alpha$. In an effort to begin working towards this and other results for non-contiguous $C$, we prove that any $A$ of size $\alpha$ that tiles some finite contiguous $C$ itself has at most as many tilings by tiles of size $\alpha'$ (for any $\alpha'\in\mathbb{Z}^{+}$) as there are tilings of $[\alpha]$ by tiles of size $\alpha'$.
\end{abstract}

\section{Introduction}
Translational tilings of $\mathbb{Z}$ \cite{CM99,LW96,Tijdeman93}, and of $\mathbb{Z}^{d}$ \cite{BKT23,GT20,KM09,Szegedy98} more generally, are natural problems that have been studied in many prior works. While the vast majority of work in this area seeks to tile the infinite set $\mathbb{Z}^{d}$, Pederson and Wang \cite{PW01} initiated the study of tiling finite intervals of $\mathbb{Z}$. This was later followed by the independent work of Bodini and Rivals \cite{BR06}, with the combined works outlining necessary and sufficient conditions for such tilings. Additionally, Bodini and Rival \cite{BR06}, as well as Rivals \cite{Rivals07}, initiated the study of counting the number of such tilings. In this work, we extend these results to study the number of tilings of a finite $C\subset\mathbb{Z}^{d}$ by distinct tiles of size $\alpha$, the set of which we refer to as $\mathcal{T}(\alpha,C)$. While most of our results focus on contiguous $C$ (i.e. $C$ such that $C$ is the Cartesian product of $d$ intervals), we also prove several results related to non-contiguous $C$. In summery, we expand upon the prior works in the following ways: 
\begin{enumerate}
\item Introducing new characterizations of tilings of $[n]$ that allows for the specification of a fixed tile size.
\item Proving a partial order on $|\mathcal{T}(\alpha,C)|$ with respect to $\alpha$ for any finite contiguous $C$.
\item Proving strong upper and lower bounds as to the number of such tilings.
\item Extending these results to $d$ dimensions.
\item Proving similar results for tilings of $C$ for a family of finite non-contiguous $C$.
\end{enumerate}
We now discuss how each of these contributions and the prior works that relate to them.

\subsection{Characterizations of Tilings}
With respect to the tilings of finite intervals of $\mathbb{Z}$, a number of our results align with results of Bodini and Rivals \cite{BR06} and Pederson and Wang \cite{PW01}, but allow for specification of the additional parameter $\alpha$ (i.e. a specific tile size). For us to specify properties of tilings of finite intervals of $\mathbb{Z}$ by tiles of size $\alpha$, it must be the case that any properties we highlight must also be true of tilings of finite intervals of $\mathbb{Z}$ without a specified tile size. Thus, one can recover many of the structural results of these prior works from our own by considering them for all $\alpha$. Unfortunately, there does not appear to be a straightforward way to derive our results from those prior without writing new proofs that take tile size into consideration at each step. Despite this, there are several properties of tilings utilized in our proofs that were proved in these prior works and which we summarize in Lemma \ref{SegmentRiftSize}. For completeness, we provide a proof Lemma \ref{SegmentRiftSize} in the terminology of this paper.

\subsection{Counts on the Number of Distinct Tilings}
As to the counting of tilings of finite subsets of $\mathbb{Z}^{d}$, Bodini and Rivals \cite{BR06} and Rivals \cite{Rivals07} began working towards this by counting the tilings of finite intervals of the discrete line (i.e. the case of $d=1$). While the recent work of Benjamini, Kozma, and Tzalik counts the number of tiles from some finite contiguous subset of $\mathbb{Z}^{d}$, the tiles counted are those that tile the infinite set $\mathbb{Z}^{d}$ as a whole, which is a fundamentally different question then tiling a finite subset of $\mathbb{Z}^{d}$. Given this, we focus on the results of Bodini and Rivals \cite{BR06} and Rivals \cite{Rivals07}, who begin by proving that the number of tilings of a finite interval of $\mathbb{Z}$ is equal to the elements of the integer sequences A067824 and A107067 as indexed by The On-Line Encyclopedia of Integer Sequences \cite{Sloane64}, though we note that the equivalence of sequences A067824 and A107067 was independently established by Karhumaki, Lifshits, and Rytter \cite{KLR10}. Unfortunately, neither sequence has clear upper or lower bounds. Further, our goals differ somewhat from those of Rivals as we wish to study fixed tile sizes and extend beyond finite intervals to some finite non-contiguous $C$. Thus, even if one did derive a satisfactory upper or lower bound from these integer sequences, this would not provide an upper of lower bound on the number of tilings for any particular fixed tiles size. This means that we require new formulas for counting tilings, even in the case of finite intervals of $\mathbb{Z}$. While we need a new sequence and different analysis for our desired setting and bounds, their results are still highly relevant to this work. The integer sequence A107067 was originally established as a count on the number of polynomials with coefficients in $\{0,1\}$ that divide $x^{n-1}$. As there are sub-linearly many distinct tile sizes that can tile a finite interval of the discrete line and because our upper and lower bound are not sensitive to such small multiplicative factors, our upper and lower bounds also serve as bounds on the number of polynomials with coefficients in $\{0,1\}$ that divide $x^{n-1}$ due to these past results.

\subsection{Upper and Lower Bounds on the Number of Distinct Tilings}
While we are unaware of any previous literature counting the number of tilings of $[n]$ with tiles of a fixed tile size $\alpha$, one can derive some trivial upper and lower bounds on the number of such tilings. For an upper bound on $\mathcal{T}(\alpha,[n])$ and without relying on prior work, one can simply note that there are at most ${n-1\choose\alpha-1}$ ways to make a tile of size $\alpha$ using elements in $[n]$ and such that the tile always includes $1$. While many of these tiles would fail to tile $[n]$, by setting $\alpha$ to $n/2$ we get a very crude upper bound of ${n\choose n/2}$. Another more in-depth approach would be to use our arguments from Lemma \ref{DivisorsToTilings} and Corollary \ref{UpperBoundCalculation} along with Theorem 5 of Bodini and Rivals \cite{BR06}, but this would result in the upper bound we obtain raised to the $\log n$.\footnote{This is due to not knowing how the distinct tilings are distributed with respect to tile size, necessitating a multiplication by the number of distinct tile sizes (which itself equals the number of divisors of the size of the set to be tiled).} To shave of this additional $\log n$ multiplicative factor from the exponent, we are thus motivated to give Definition \ref{RecursiveDef} and prove Lemma \ref{TightBound}.

As to lower bounds for $\mathcal{T}(\alpha,[n])$, we note that a trivial lower bound of $\log n$ can be obtained without prior work by considering tilings of $[2^{k}]$ for tiles of size $2^{k/2}$. Despite the fact that we are seeking a lower bound with respect to a fixed tile size, it would be theoretically possible to use results about the properties of tilings of any size to achieve this. If one could show that the number of tilings of $C$ across all possible tile sizes were super-polynomial with respect to $|C|$, one could use the fact that the number of distinct tile sizes is small (i.e. sub-linear in $n$) to prove that there exists some tile size $\alpha$ such that there are super-polynomial distinct tiles of size $\alpha$ able to tile $C$. On the one hand, one could interpret the prior observation as highlighting that one need not consider the restriction of a fixed tile size to achieve a super-polynomial lower bound in the case of a fixed tile size. That being said, we feel a more appropriate takeaway is that it is unlikely that it would be substantially easier to prove a super-polynomial lower in the case of no fixed tile size than the proof we use here for fixed tile size (despite the former set of tilings technically being larger). Thus, the fact that our argument only allows tilings of a fixed size likely causes no additional complexity in the proof and makes the end result of Lemma \ref{LowerBoundCalculation} more transparent by highlighting the particular family of sets and tile sizes driving the lower bound.
Before we are able to proceed with the lower bound calculation however, we require a new formula for enumerating the elements of $\mathcal{T}(\alpha,[n])$. The way we chose to enumerate the elements of $\mathcal{T}(\alpha,[n])$ in Lemma \ref{TightBound} does not appear conducive to good lower bound arguments. Thus, in Lemma \ref{TightBound2}, we establish a second formula for enumerating the elements of $\mathcal{T}(\alpha,[n])$ utilizing the inclusion-exclusion principle. From this, we are able to achieve our super-polynomial lower bound on $|\mathcal{T}(\alpha,[n])|$ for infinitely many $n$ and certain $\alpha$ chosen based upon $n$. We note that such a lower bound cannot be achieved for all $n$ nor for all $\alpha$, as if $n$ is prime or $\alpha=1$ there is at most a single tiling of $[n]$. 

\subsection{Generalizations to Higher Dimension and Tiling Non-Contiguous Sets}
While our aforementioned results consider more restricted sets of tilings (by allowing for an additional restriction in the form of a fixed tile size), we also work to reduce the number of restrictions on the underlying set to be tiled. We do this by considering tilings in higher dimensions as well as tilings of sets with non-contiguous elements. In their conclusion, Bodini and Rivals \cite{BR06} claim that many of their results can be extended to tilings of a $d$-dimensional rectangle and give a brief high-level description as to the technique for doing so. We prove that our results with respect to fixed tile size can be extended to $d$-dimensions. For the structural results of this form, these proceed in essentially the manner Bodini and Rivals \cite{BR06} describe (though again, with the additional need to take into account fixed tile size). As one can remove the fixed tile size restriction in these results by summing over all $\alpha$, these results also confirm the claim of Bodini and Rivals \cite{BR06}. In addition we note that our upper and lower bounds from the $1$-dimensional case still apply in $d$-dimensions. As to our counting results, their extension to $d$-dimensions follows readily from the generalization of the structural results to $d$-dimensions.

As to the tiling of finite non-contiguous $C$ we prove an upper bound on the number of such tilings with a tile of size $\alpha$ for a particular family of such $C$. More specifically, we prove that any $A$ of size $\alpha$ that tiles $C=[x_{1}]\times[x_{2}]\times\ldots [x_{d}]$ has at most as many tilings of size $\alpha'$ (for any $\alpha'\in\mathbb{Z}^{+}$) as there are tilings of $[\alpha]$ by tiles of size alpha (i.e. $|\mathcal{T}(\alpha',A)|\leq|\mathcal{T}(\alpha',[\alpha])|$).

\subsection{Prior Work On Tilings of Some Finite Non-Contiguous Multisets}
While there does not appear to be prior work on the number of tilings of non-contiguous finite subsets of integers, some research on the Turnpike Problem \cite{LSS03} (also referred to as the Partial Digest problem \cite{SS94}) implicitly studies the number of ``tilings" of a restricted familiy of finite multisets of integers. By the tiling of a multiset, we mean that the elements of the multiset are covered by translates of a given tile a number of times exactly equal to their multiplicity in the multiset. The turnpike problem asks, given a multiset of ${n\choose 2}$ integers, if there exists a set of $n$ points on a line with exactly this multiset of interpoint distances. While never phrased as a tiling problem in prior work, we note that it is equivalent to the following multiset tiling problem:
\begin{enumerate}
\item Take the given multiset $M$ of ${n\choose 2}$ integers (counting multiplicity), and build $M'$ by including in it the elements of $M$, the elements of $-1\cdot M$, and $0$ with multiplicity $n$.
\item Ask if there exists a tile of size $n$ that tiles $M'$.
\end{enumerate}
It is straightforward to prove that such tilings of $M'$ are in one-to-one correspondence with valid sets of $n$ points on a line whose set ${n\choose 2}$ interpoint distances equal $M$. Thus, a count on the number of valid turnpike reconstructions is also a bound on the number of distinct tiles that can tile $M'$ of this highly specific form. Lemke, Skiena, and Smith \cite{LSS03} give a polynomial\footnote{The exact degree of the polynomial bound depends on if the underlying point set is permitted to have points with multiplicity greater than 1, though they handle both cases.} upper bound on the number of solutions to the turnpike problem. They do this by transforming the problem into a question about the factorization of related polynomials, then counting the number of irreducible factors that are not self-reciprocal. This is interesting, as this polynomial formulation of the problem has a similar flavor to characterizations of tilings of finite intervals studied in the algebraic approach of Bodini and Rivals \cite{BR06} and Theorem 2.5 of Pederson and Wang \cite{PW01}. While we did not use this approach in our method of counting, we leave as an open question whether or not some of the techniques from either body of work can be applied to the other to yield improved results.

\section{Definitions}
For $n\subset\mathbb{Z}^{+}$ and $x,y\subset\mathbb{Z}$, we let $[n]\triangleq\{1,\dots,n\}$ and $[x,y]\triangleq\{x, x+1, x+2,\dots,y-1, y\}$ where $x\leq y$. We take $p_{i}$ to be the $i^{\text{th}}$ prime and we use $x|y$ to mean $x$ divides $y$. Let the divisor function $\sigma_{0}(n)$ for $n\in \mathbb{Z}$ equal the number of positive divisors of $n$. We write $\omega^{*}(n)$ and $\Omega^{*}(n)$ to  the prime omega functions, where $\omega^{*}(n)$ equals the number of distinct prime factors of $n$ (ignoring multiplicity of these prime factors) and $\Omega^{*}(n)$ equals the total number of prime factors of $n$ (i.e. the sum of the exponents across all prime factors of $n$).\footnote{The prime omega functions are usually denoted by $\omega(n)$ and $\Omega(n)$ for ignoring and counting multiplicity of the prime factors respectively, however we use $\omega^{*}(n)$ and $\Omega^{*}(n)$ respectively so as to avoid confusion with asymptotic notation.} Throughout this work we use $\log$ to mean $\log_{2}$. All sets discussed in this work are assumed to be finite. For sets $A$ and $B$, let $A+B=\{a+b:a\in A,b\in B\}$ be the Minkowski sum of $A$ and $B$. For a finite set $A\subset\mathbb{Z}$, we use $\text{min}[A]$ and $\text{max}[A]$ to indicate the minimum and maximum element of $A$. For a function $f$, we write $\text{min}_{x}[f(x)]$ and $\text{max}_{x}[f(x)]$ to indicate the value of $f(x)$ for any choice of $x$ the minimizes or maximizes $f(x)$ respectively.\footnote{If such an $x$ does not exist we treat this as undefined though this circumstance does not occur in this work.} When we take the max or min of a set of sets, we take the max or min respectively from the union over all elements of the set (e.g. $\text{max}\big[\big\{\{1,2\}\{5,6\}\big\}\big]=6$). Similarly, if we use a set of sets $S$ in a set difference operation, we treat $S$ as the union of its elements. Let $\text{proj}_{i}(x)$ be the projection of $x\in\mathbb{Z}^{d}$ onto the $i^{\text{th}}$ dimension (i.e. $\text{proj}_{i}(x_{1},\ldots,x_{i},\ldots,x_{d})=x_{i}$). Further, we let $\text{proj}_{i}(A)$ for a set $A$ equal the set $\{\text{proj}_{i}(x)|x\in A\}$. 
\begin{definition}\label{Contiguous}
We call a set $C$ \textbf{contiguous} if, for some $d\in\mathbb{Z}^{+}$, we have $C=[x_{1}]\times[x_{2}]\times\ldots [x_{d}]$ where $x_{1},\ldots,x_{d}\in\mathbb{Z}^{+}$.\footnote{One could alternatively define contiguous $C$ to be $C$ such that $C=[x_{1},y_{1}]\times[x_{2},y_{2}]\times\ldots [x_{d},y_{d}]$ for $x_{1},\ldots,x_{d},y_{1},\ldots,y_{d}\in\mathbb{Z}$ and $\forall i(x_{i}\leq y_{i})$, however this complicates some statements and proofs without increasing the generality of the results.}
\end{definition}
When $d=1$, a contiguous $C$ is simply a finite interval of the discrete line, though we fix its minimum point to be at $1$ for convenience.
\begin{definition}\label{TilingDef}
For finite sets $A,B,C\subset\mathbb{Z}^{d}$ such that $|A|=\alpha$ and $|B|=\beta$, the pair $(A,B)$ is a \textbf{valid translational tiling} of $C$ if and only if $A+B=C$ and $|C|=\alpha\beta$. We refer to $A$ as the \textbf{tile} and $B$ as the \textbf{translations}.
\end{definition}
We will typically drop the term ``translational" from the above definition and simply refer to $(A,B)$ as a \textbf{valid tiling} of $C$. Definition \ref{TilingDef} allows for infinitely many tilings of all finite sets as, to tile $C$, one can set $A=C+m$ and $B=C-m$ for all $m\in\mathbb{Z}^{d}$. So that we may count the number of tilings up to such translations, we give the following definition.
\begin{definition}\label{CongClass}
The \textbf{congruence class} of a tiling $(A,B)$ is the set of all tilings $(A',B')$ such that $A+m=A'$ and $B-m=B'$ for some $m\in\mathbb{Z}^{d}$ (we would also then refer to $A$ and $A'$ as themselves congruent). Let the \textbf{canonical representative} of each congruence class of tilings be $(A,B)$ such that $(0,\ldots,0)\in B$ and $\forall i(\text{min}[\text{proj}_{i}(B)]\geq 0)$.
\end{definition}
For the remainder of the paper, we presume all tilings are the canonical representative of the congruence class of tilings to which they belong. With this, we can now define the set of fixed tile size tilings of a finite set $C$.
\begin{definition}
For finite sets $A,B,C\subset\mathbb{Z}^{d}$ such that $|A|=\alpha$ and $|B|=\beta$, we define $\mathcal{T}(\alpha,C)$ to be the set of all $(A,B)$ that are the canonical representative of their congruence class of tilings, are valid tilings of $C$, and are such that $|A|=\alpha$. Further, we define $\mathcal{T}((\alpha_{1},\alpha_{2},\ldots,\alpha_{d}),C)$ to be the set of all $(A,B)$ that are the canonical representative of their congruence class of tilings, are valid tilings of $C$, and are such that $\forall{i}\in[d](|\text{proj}_{i}(A)|=\alpha_{i})$.
\end{definition}
The remaining definitions in this section are purely with respect to finite intervals of $\mathbb{Z}$ (i.e $C$ such that $d=1$). So that we may refer to a specific member of $\mathcal{T}(\alpha,C)$ when necessary, we require the following.
\begin{definition}\label{TilingOrder}
Let $C$ be a finite subset of $\mathbb{Z}$. For $T=(A,B)$ and $T'=(A',B')$ such that $T,T'\in \mathcal{T}(\alpha,C)$, we say that $T<T'$ if and only if $\text{min}[A\setminus A']<\text{min}[A'\setminus A]$. Otherwise, $T=T'$. We use $T_{i}=(A_{i},B_{i})$ to represent the $i^{\text{th}}$ valid tiling of $C$ according to this total order.
\end{definition}
To see that Definition \ref{TilingOrder} is valid, we require that it indeed defines a total order on $\mathcal{T}(\alpha,C)$ when $d=1$. To see this notice that, if $i\neq j$, then $A_{i}\neq A_{j}$, as there is a unique $B$ for tiling $C$ with translates of any fixed $A$. For the rest of the properties of a total order, we can see $A_{i}$ as corresponding to an integer that is the sum of $2^{k}$ for all $k\in A_{i}$. Thus, this being a total order follows from any subset of the integers being totally ordered by their value. We define $a_{(i,j)}$ to denote the $i^{\text{th}}$ smallest element of $A_{j}$. We will usually drop the subscript $j$ from this and other notation when it is clear from context or irrelevant (e.g. writing $a_{i}$ as opposed to $a_{(i,j)}$). We define $b_{(i,j)}$ similarly to $a_{(i,j)}$, but with reference to $B$ instead of $A$.
\begin{definition}
Let $(A_{j},B_{j})\in\mathcal{T}(\alpha,[n])$ be the $j^{\text{th}}$ valid tiling of $[n]$. We define the \textbf{first segment} and \textbf{first rift} of the $j^{\text{th}}$ tiling (i.e. $s_{(1,j)}$ and $r_{(1,j)}$ respectively) to be:
\begin{itemize}
\item $s_{(1,j)}\triangleq\{x\in A:x<\textup{min}[C\setminus A]\}$
\item $r_{(1,j)}\triangleq\{x\in C\setminus A:x<\textup{min}[A\setminus s_{1}]\}$.
\end{itemize}
As we frequently require the size of the first segment and first rift of a tiling, we let $k_{(s,j)}\triangleq |s_{(1,j)}|$ and $k_{(r,j)}\triangleq |r_{(1,j)}|$. For $i>1$, we define the \textbf{$i^{\text{th}}$ segment} and \textbf{$i^{\text{th}}$ rift} of the \textbf{$j^{\text{th}}$ tiling} (i.e. $s_{(i,j)}$  and $r_{(i,j)}$ respectively) recursively as follows:
\begin{itemize}
\item $s_{(i,j)}\triangleq\big\{x\in A:\textup{max}[s_{i-1}]<x<\textup{min}[(C\setminus A)\setminus \{y\in C:y\leq \textup{max}[r_{i-1}]\}]\big\}$
\item $r_{(i,j)}\triangleq\Bigg\{x\in C\setminus A:\textup{max}[r_{i-1}]<x<\textup{min}\bigg[A\setminus \bigg(\bigcup\limits_{k=1}^{i-1} s_{k}\bigg)\bigg]\Bigg\}$.
\end{itemize}
\end{definition}
To put the above more intuitively, the $i^{\text{th}}$ segment of a valid tiling $T_{j}$ is the $i^{\text{th}}$ set of consecutive (relative to $C$) elements of $A_{j}$ where as the the $i^{\text{th}}$ rift of a tiling $T_{j}$ is the elements of $C$ between $s_{i}$ and $s_{i+1}$. We define $S_{j}$ and $R_{j}$ to be the set of all non-empty $s_{(i,j)}$ and $r_{(i,j)}$ respectively. For example, let $A=\{1,2,5,6,9,10\}$, $B=\{0,2\}$ and $C=[12]$. Then $s_{1}=\{1,2\}$, $s_{2}=\{5,6\}$, and $s_{3}=\{9,10\}$ while $r_{1}=\{3,4\}$ and $r_{2}=\{7,8\}$. The aforementioned segments and rifts would then be exactly the elements of $S$ and $R$ respectively, as all other segments and rifts are empty in this example. We define $\mathcal{T}(\alpha,C,(k_{s},\cdot))$ to be
\[\mathcal{T}(\alpha,C,(k_{s},\cdot))\triangleq\{(A_{i},B_{i})\in \mathcal{T}(\alpha,C):|s_{(1,i)}|=k_{s}\}\]
and define $\mathcal{T}(\alpha,C,(k_{s},\cdot))$ to be
\[\mathcal{T}(\alpha,C,(k_{s},k_{r}))\triangleq\{(A_{i},B_{i})\in \mathcal{T}(\alpha,C):(|s_{(1,i)}|=k_{s})\land(|r_{(1,i)}|=k_{r})\}.\]

\section{Formulas for Enumerating $\mathcal{T}(\alpha,[n])$}
In this section, we define two formulas for counting the exact number of distinct tilings for any $\alpha$ and $C=[n]$. By summing over all tile sizes,  To prove the first of these formulas is correct, we do two things:
\begin{itemize}
\item Define necessary and sufficient conditions as to the size of segments and rifts in valid tilings.
\item Group points into \textit{meta-points} such that each element of the meta-point is in the same segment or rift as each other element of the meta-point.
\end{itemize}
Taken together, we are able to calculate $|\mathcal{T}(\alpha,[n])|$ by taking the sum of a small number of $|\mathcal{T}(\alpha',[n'])|$ for $n'<n$ where $\alpha'$ and $n'$ are straightforward to calculate from $\alpha$ and $n$. This first formula is useful for proving both our partial order on $|\mathcal{T}(\alpha,[n])|$ with respect to tile size for fixed $n$, as well as for proving our upper bound on $|\mathcal{T}(\alpha,[n])|$. Unfortunately, it is less useful for deriving a strong lower bound. Thus, we define our second formula for counting $|\mathcal{T}(\alpha,[n])|$, the correctness of which we prove from the first formula using a combinatorial argument.

Before proceeding further with our results, we present a result characterizing the tilings of discrete intervals that was independently proven by Pederson and Wang \cite{PW01} as well as by Bodini and Rivals \cite{BR06}. For completeness, we present a proof of this as Lemma \ref{SegmentRiftSize}, but with the statement of the lemma and proof in the notation of this work.

\begin{lemma}[\cite{BR06,PW01}]\label{SegmentRiftSize}
For all $\alpha,n\subset\mathbb{Z}$ and any valid tiling $(A_{i},B_{i})\in\mathcal{T}(\alpha,[n])$, all segments of the $i^{\text{th}}$ tiling have size $k_{s}$ and the length of any rift of the $i^{\text{th}}$ tiling is divisible by $k_{s}$.
\end{lemma}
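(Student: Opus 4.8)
The plan is to analyze a valid tiling $(A,B) \in \mathcal{T}(\alpha,[n])$ by repeatedly ``peeling off'' the first segment and first rift, arguing that the structure of $B$ forces every segment to have the same size and every rift to have size divisible by $k_s = |s_1|$. First I would establish that $0 \in B$ (since $(A,B)$ is the canonical representative), so that $A \subseteq C = A+B$ and in particular the smallest elements of $C$ are exactly the smallest elements of $A$; hence $s_1 = \{1,2,\dots,k_s\}$ as an interval, and $1,\dots,k_s \notin A+b$ for any nonzero $b \in B$. The first rift $r_1$ then consists of the next $k_r$ consecutive elements of $C$, none of which lie in $A$; since $A+B$ must cover the point $k_s+1$, there must be some $b \in B$ with $b = k_s+1 - a$ for some $a \in A$, and minimality of the rift elements forces $b$ to equal $r_1$'s starting offset. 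The key structural claim is that $B$ is ``laminar'' with respect to this decomposition: writing $b^* = \min(B \setminus \{0\})$, one shows $b^* = k_s + k_r$ is impossible to exceed because otherwise the point $k_s+1$ could only be covered by a translate $a + 0$ with $a \in A$, contradicting $k_s+1 \notin A$; and $b^*$ cannot be smaller than the length of $s_1$ either, or two translates of $A$ would overlap.

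The heart of the argument is then an induction on the number of segments. Having identified $b^*$, consider the set $A' = A \cap [1, b^*]$ — this is precisely $s_1$, an interval of length $k_s$ — and observe that $A = A' + B'$ where $B' = B \cap [0, \text{(something)}]$ is the ``coarse'' part of $B$, because the translates $A' + b$ for the small elements $b \in B'$ tile an initial block, and the whole tiling factors as $(A' + B')+(\text{rest of }B)$. More carefully, I would show $A$ itself decomposes as $A = s_1 + E$ for some set $E \subseteq \mathbb{Z}$ with $0 \in E$, where $E$ records the offsets of the successive segments; and simultaneously $[n] = s_1 + (E + B)$. Since $|s_1| = k_s$ divides $|A| = \alpha$ (as $A = s_1 + E$ is itself a direct-sum tiling, a fact that needs its own short justification via the initial-block argument), and since the gaps between consecutive elements of $E$ are exactly the lengths of the rifts plus $k_s$, we get that every segment has size $k_s$ and every rift length is a multiple of $k_s$ — the rift length between consecutive segments is $(e_{j+1} - e_j) - k_s$ and one argues this is a multiple of $k_s$ by looking at how $B$ must fill the rift using translates that are themselves built from blocks of size $k_s$.

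The main obstacle I anticipate is making rigorous the claim that the tiling ``factors through'' the first segment — i.e., that $A = s_1 + E$ and correspondingly $B$ splits compatibly — rather than just pattern-matching on the small example. The clean way to do this is: let $\ell = k_s + k_r$ (the position where the second segment begins) and prove by covering/counting that $A + B = [n]$ forces $A$ to be a union of intervals each congruent mod $\ell$ to $\{1,\dots,k_s\}$... but that over-claims, since the later rifts can have length a larger multiple of $k_s$. So instead I would argue locally and inductively: peel off $s_1 \cup r_1$, show the ``remainder'' $A \setminus s_1$ together with an appropriately shifted $[n - k_s - k_r]$ (or rather the structure of $C$ beyond position $k_s + k_r$) forms a valid tiling with the same tile-size parameter $k_s$ for its own first segment, and invoke the inductive hypothesis. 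The delicate point is verifying that the peeled-off object really is a valid tiling in the sense of Definition \ref{TilingDef} — that the sizes multiply correctly and that no translate of $A$ straddles the boundary at position $k_s + k_r$ — which follows from the minimality built into the definitions of segment and rift but requires care to state without circularity.
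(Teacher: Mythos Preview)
Your opening moves are right --- $0\in B$, $s_1=[k_s]$, and the smallest positive translate $b^*$ is pinned down by the need to cover $k_s+1$ without overlap --- but note that your own argument actually forces $b^*=k_s$ exactly, not merely $b^*\le k_s+k_r$: since $k_s+1=a+b^*$ with $a\ge 1$ gives $b^*\le k_s$, and $b^*<k_s$ would make $s_1$ and $s_1+b^*$ overlap. Getting $b^*=k_s$ on the nose is the step that unlocks the short argument you are missing.

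The paper does not attempt either the factorization $A=s_1+E$ or the peel-off induction you describe. Instead, once it knows that $\{0,k_s,2k_s,\dots,k_r\}\subset B$ (because $r_1$ can only be covered by translates of $s_1$, which simultaneously gives $k_s\mid k_r$), it uses the single fact $k_s\in B$ to constrain \emph{every} segment directly, by induction on the segment index $i$. If $|s_i|>k_s$ then $s_i\cap(s_i+k_s)\neq\emptyset$, violating disjointness of translates. If $0<|s_i|<k_s$, consider the interval $I$ strictly between $\max s_i$ and $\min(s_i+k_s)$; one checks $1\le |I|<k_s$, and $I$ must be covered by a translate of some earlier segment $s_j$ with $j<i$, but $|s_j|=k_s>|I|$ by the inductive hypothesis, a contradiction. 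This is much shorter than peeling: no need to verify that a truncated object is again a valid tiling, and no worry about translates straddling the boundary.

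Your peel-off idea is not wrong in spirit, but as you yourself flag, the residual object after removing $s_1\cup r_1$ is \emph{not} naturally a tiling of a shorter interval by a smaller tile: the set $[k_s+k_r+1,n]$ is covered by $(A\setminus s_1)+B$ together with $s_1+(B\setminus B^*)$, which does not have the form $A'+B'$ for a single tile $A'$. So the induction you sketch would require a more elaborate invariant than ``valid tiling of $[n']$.'' The paper's segment-index induction sidesteps this entirely.
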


\begin{proof}
Once a $k_{s}$ and $k_{r}$ have been selected and knowing that $r_{1}\neq\emptyset$, the only way to tile $r_{1}$ with translates of elements of $A$ is with translates of elements of $s_{1}$. As elements of $s_{1}$ are consecutive as are those of $r_{1}$, the only way to do this is as in Lemma \ref{LowerBound} (i.e. by defining $B^{*}$ to be $\{x\cdot k_{s}|x\in[0,k_{r}/k_{s}]\}$ and let $B^{*}$ be a subset of $B$). This also justifies the restriction of $\mathcal{R}$ that $k_{s}|k_{r}$, as otherwise tiling $r_{1}$ with translates of $s_{1}$ would not be possible (example: for $s_{1}=\{1,2\}$ and $s_{2}=\{x,x+1\}$, the number of elements in $r_{1}=[3,x-1]$ must be divisible by $2$).

Now we prove that $\forall i[(|s_{i}|\neq 0)\implies (|s_{i}|=k_{s})]$. By definition, $|s_{1}|=k_{s}$. Suppose this is the case for all $s_{j}$ such that $j<i$. Consider $s_{i}$. If $|s_{i}|=0$ the statement holds. Suppose then that $|s_{i}|\neq 0$ and that $|s_{i}|>k_{s}$. Then, as $k_{s}$ is an element of $B^{*}$, we have that $s_{i}\cap(s_{i}+k_{s})\neq\emptyset$ which is a contradiction. Suppose $|s_{i}|<k_{s}$. Let $I=\big[\text{max}[s_{i}]+1,\text{min}[s_{i}+k_{s}]-1\big]$ and observe that $1\leq|I|<k_{s}$. The lower bound on $|I|$ follows directly from $|s_{i}|<k_{s}$ and the translation of $s_{i}$ by $k_{s}$, while the upper bound on $|I|$ follows from the fact that $|s_{i}|>0$ and the fact that we are taking the max from $s_{i}$ for lower bound $I$, but we are taking the min from $s_{i}+k_{s}$ the upper bound $I$. Taken together, with the fact that $|s_{i}|>0$, we have that $|I|<k_{s}$. Notice that introducing any element to $B$ smaller then any element of $B^{*}$ would result in a collision between translates of $s_{1}$. Given this, $I$ must be tiled by some translate of $s_{j}$ for $j<i$, but $|s_{j}|=k_{s}>|r^{*}|$ and thus, we have a contradiction. Together, these prove that $\forall i[(|s_{i}|\neq 0)\implies (|s_{i}|=k_{s})]$ as desired.
\end{proof}

With this, we proceed with defining the first counting formula and proving its correctness. We do this in two main steps in which we:
\begin{itemize}
\item Prove that the restrictions to the size of the first segment and rift (i.e. $k_{s}$ and $k_{r}$ respectively) and the sub-cases we sum over based upon these these values are \textbf{sufficient} to yield a valid tiling. This proves that our formula acts as a lower bound to $|\mathcal{T}(\alpha,[n])|$.
\item Prove that the restrictions to the size of the first segment and rift (i.e. $k_{s}$ and $k_{r}$ respectively) and the sub-cases we sum over based upon these these values are \textbf{necessary} to yield a valid tiling. This proves that our formula acts as an upper bound to $|\mathcal{T}(\alpha,[n])|$.
\end{itemize}
As the value produced by our formula is both an upper and lower bound on $|\mathcal{T}(\alpha,[n])|$, it follows that it calculates the exact value of $|\mathcal{T}(\alpha,[n])|$. We can now define the first formula in full detail.
\begin{definition}\label{RecursiveDef}
For $\mathcal{S}=\{k_{s}\in\mathbb{Z}^{+}:k_{s}|\alpha\}$ and
\[\mathcal{R}_{k_{s}}=\{k_{r}\in\mathbb{Z}^{+}:(k_{s}|k_{r})\land (k_{s}+k_{r}|k_{s}\beta)\land\big((k_{s}=\alpha)\Longleftrightarrow (k_{r}=0)\big)\}\]
we define the set $\Psi_{(\alpha,[n],(k_{s},k_{r}))}$ to be
\[
\Psi_{(\alpha,[n],(k_{s},k_{r}))}\triangleq
\begin{cases}
0, & \alpha\nmid n\\
1, & k_{r}=0\\
\bigg|\mathcal{T}\bigg(\alpha/k_{s},\recurfrac\bigg)\bigg|-\bigg|\mathcal{T}\bigg(\alpha/k_{s},\recurfrac ,(1,\cdot)\bigg)\bigg|, & \textup{otherwise} 
\end{cases}
\]
\end{definition}
Using this definition, we prove that the following method can be used to count the tilings of $[n]$ by sets of size $\alpha$.
\begin{lemma}\label{TightBound}
\[|\mathcal{T}(\alpha,[n])|=\sum\limits_{k_{s}\in\mathcal{S}}\text{ }\sum\limits_{k_{r}\in\mathcal{R}_{k_{s}}}\Psi_{(\alpha,[n],(k_{s},k_{r}))}.\]
\end{lemma}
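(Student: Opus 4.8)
The plan is to establish the claimed formula as an exact count by proving the two inequalities separately, exactly as the surrounding text advertises: the right-hand side is both a lower bound and an upper bound on $|\mathcal{T}(\alpha,[n])|$. The organizing idea throughout is the partition of $\mathcal{T}(\alpha,[n])$ according to the size of the first segment and first rift, namely $\mathcal{T}(\alpha,[n]) = \bigsqcup_{k_s}\bigsqcup_{k_r}\mathcal{T}(\alpha,[n],(k_s,k_r))$, which reduces the problem to showing $|\mathcal{T}(\alpha,[n],(k_s,k_r))| = \Psi_{(\alpha,[n],(k_s,k_r))}$ whenever $(k_s,k_r)$ ranges over $\mathcal{S}\times\mathcal{R}_{k_s}$, together with verifying that no tiling has a $(k_s,k_r)$ outside this index set. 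The latter is largely handled by Lemma~\ref{SegmentRiftSize} (which forces $k_s \mid k_r$ and all segments to have size $k_s$, hence $k_s \mid \alpha$) plus the elementary observation that $k_r = 0$ iff the tiling is the trivial one $A = [n]$, iff $k_s = \alpha$; I would also dispatch the $\alpha \nmid n$ case immediately, since then $\mathcal{T}(\alpha,[n])$ is empty and every $\Psi$ term is $0$.

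First I would handle the two easy branches of the case split. If $\alpha \nmid n$, both sides are zero. If $k_r = 0$, then by Lemma~\ref{SegmentRiftSize} every segment has size $k_s = \alpha$ and the rifts all have size divisible by $k_s$, but the first rift is empty, forcing $A = [n]$ and $B = \{0\}$, so there is exactly one such tiling — matching $\Psi = 1$. The substance is the ``otherwise'' branch: I claim $\mathcal{T}(\alpha,[n],(k_s,k_r))$ is in bijection with the set of tilings $(A',B') \in \mathcal{T}(\alpha/k_s, [n/(k_s+k_r)])$ whose own first segment has size \emph{strictly greater than} $1$, and the count of these is $|\mathcal{T}(\alpha/k_s,[n/(k_s+k_r)])| - |\mathcal{T}(\alpha/k_s,[n/(k_s+k_r)],(1,\cdot))|$. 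The forward map is the ``meta-point'' contraction promised in the prose: given a tiling of $[n]$ with first segment and first rift of sizes $k_s$ and $k_r$, the block structure from Lemma~\ref{SegmentRiftSize} says that translating $s_1 = \{1,\dots,k_s\}$ by multiples of $k_s$ is forced, so $B \supseteq \{0,k_s,2k_s,\dots,k_r\}$ and the whole configuration is built out of blocks of $k_s$ consecutive points separated by gaps that are multiples of $k_s$; collapsing each block of size $k_s$ to a single point yields a tiling of $[n/(k_s+k_r)]$ by a tile of size $\alpha/k_s$ whose first rift is a single point and hence whose first segment has size $>1$ (because the $k_s$-blocks of $A$ that were adjacent in $[n]$, forming $s_1$, get separated after contraction — more precisely the first post-contraction segment corresponds to the first "super-segment" and the fact that the original first rift was nonempty of size $k_r$ with $k_s \mid k_r$ but $k_r/k_s$ could be $1$, forcing a first rift of size exactly... ). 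I would make precise that contracting sends first-segment-size $k_s$ to a tiling with first-rift-size $k_r/k_s \geq 1$ and, crucially, that the contracted tiling cannot have first segment of size $1$: a contracted first segment of size $1$ would mean the original $s_1$ was immediately followed by $r_1$ and then another block, i.e. the ``super''-first-segment has size $1$, but one checks this is impossible given $r_1 \neq \emptyset$ and the definition of first segment at the coarse scale... alternatively, and more robustly, I would set up the bijection so that the excluded tilings $\mathcal{T}(\alpha/k_s,\cdot,(1,\cdot))$ are precisely the ones that would ``re-merge'' under a further contraction and hence be double-counted across different $(k_s,k_r)$, making the exclusion exactly what is needed for the outer double sum to partition $\mathcal{T}(\alpha,[n])$ without overlap. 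The inverse map — given a tiling of $[n/(k_s+k_r)]$ by a tile of size $\alpha/k_s$, ``blow up'' each point to a block of $k_s$ consecutive points — is straightforward, and one checks it lands in $\mathcal{T}(\alpha,[n],(k_s,k_r))$ and inverts the contraction.

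The main obstacle I anticipate is the bookkeeping around the $(1,\cdot)$ exclusion: getting the bijection domains to match \emph{exactly} so that every valid tiling of $[n]$ is counted in precisely one $(k_s,k_r)$ term and nowhere else. The risk is a tiling being representable both as ``blocks of size $k_s$ over a coarse tiling'' and as ``blocks of size $k_s' \neq k_s$ over a different coarse tiling''; the $-|\mathcal{T}(\alpha/k_s,\cdot,(1,\cdot))|$ correction is exactly the device that removes the coarse tilings that are themselves further decomposable (first segment of size $1$ at the coarse scale means the coarse tile is itself a non-trivial block configuration), so I would prove a unique-maximal-contraction statement: every tiling of $[n]$ has a canonical ``fully contracted'' form with first segment of size $1$ (or is the trivial tiling), and the pair $(k_s, k_r)$ one peels off to reach the next level is determined. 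Establishing that canonical-form/uniqueness claim cleanly, and checking the divisibility side-conditions defining $\mathcal{R}_{k_s}$ (that $k_s + k_r \mid k_s\beta$, equivalently $k_s + k_r \mid n$, is exactly the condition for $[n]$ to decompose into $n/(k_s+k_r)$ super-cells of size $k_s+k_r$ and for $\alpha/k_s$ to be a valid tile size for $[n/(k_s+k_r)]$) is where the real care is needed. Everything else — the two trivial branches, the block structure, the blow-up map — follows directly from Lemma~\ref{SegmentRiftSize} and the definitions.
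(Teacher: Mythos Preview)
Your approach matches the paper's: partition $\mathcal{T}(\alpha,[n])$ by $(k_s,k_r)$, dispatch the two trivial branches, and for the recursive branch establish a bijection between $\mathcal{T}(\alpha,[n],(k_s,k_r))$ and those tilings in $\mathcal{T}(\alpha/k_s,[n/(k_s+k_r)])$ with first segment $\geq 2$, via the meta-point contraction and its blow-up inverse (the paper splits the two directions into Lemmas~\ref{LowerBound} and~\ref{RecurUpperBound}, then assembles them with Lemma~\ref{NonRecurUpperBound}). The one place your exposition wobbles is precisely where you sense it: the contraction collapses each \emph{meta-point} of size $k_s+k_r$ (not each segment of size $k_s$) to a single point, and then the reason the contracted tile has first segment $\geq 2$ is immediate --- since the original first rift has size \emph{exactly} $k_r$, meta-points $1$ and $2$ both begin with segments $s_1,s_2\subset A$, so points $1$ and $2$ both lie in the contracted tile. (One small slip: $k_s+k_r\mid k_s\beta$ is \emph{not} equivalent to $k_s+k_r\mid n$ --- take $\alpha=4,\beta=3,k_s=k_r=2$ --- though the discrepancy is harmless because the extra $(k_s,k_r)$ pairs yield $\Psi=0$.)
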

To prove Lemma \ref{TightBound}, we first prove that the right hand side is an upper bound for the left hand side. We then use this to prove equality in all cases of $\Psi_{(\alpha,[n],(k_{s},k_{r}))}$ as well as justifying the definitions of $\mathcal{S}$ and $\mathcal{R}_{k_{s}}$. After these lemma, we proceed with the formal proof of Lemma \ref{TightBound}.

We note that Lemma \ref{LowerBound} and Lemma \ref{RecurUpperBound} require a structural result similar to Theorem 4 of Bodini and Rivals \cite{BR06} and Corollary 2.2 of Pedersen and Wang \cite{PW01} in their proofs. Unfortunately, the structural result we require is distinct from these, as it requires the additional allowance for the case of fixed tile size. This necessitates a new proof which is implicitly contained in the proofs of Lemma \ref{LowerBound} and Lemma \ref{RecurUpperBound}.
\begin{lemma}\label{LowerBound}
\[|\mathcal{T}(\alpha,[n])|\geq\sum\limits_{k_{s}\in\mathcal{S}}\text{ }\sum\limits_{k_{r}\in\mathcal{R}_{k_{s}}}\Psi_{(\alpha,[n],(k_{s},k_{r}))}.\]
\end{lemma}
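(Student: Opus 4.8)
The plan is to show that the right-hand side of Lemma~\ref{TightBound} is a lower bound by exhibiting, for each valid choice of $(k_s,k_r)$ with $k_s\in\mathcal{S}$ and $k_r\in\mathcal{R}_{k_s}$, an injection from a set of size $\Psi_{(\alpha,[n],(k_s,k_r))}$ into $\mathcal{T}(\alpha,[n],(k_s,k_r))$, and then observing that the sets $\mathcal{T}(\alpha,[n],(k_s,k_r))$ are pairwise disjoint (they are distinguished by the size of the first segment and first rift of the canonical representative) and all contained in $\mathcal{T}(\alpha,[n])$. Summing the injections then gives the inequality. The case $\alpha\nmid n$ contributes $0$, so nothing is needed; the case $k_r=0$ forces $k_s=\alpha$, giving the single ``fully contiguous'' tiling $A=[n]$, $B=\{0\}$, which is indeed in $\mathcal{T}(\alpha,[n],(\alpha,0))$, so $\Psi=1$ is matched. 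The substance is the ``otherwise'' case.

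For the generic case, first I would fix $k_s\mid\alpha$ and $k_r$ with $k_s\mid k_r$ and $k_s+k_r\mid k_s\beta$, and set $m\triangleq n/(k_s+k_r)$. The key construction is the ``meta-point'' blow-up promised in the introduction: given any tiling $(A',B')\in\mathcal{T}(\alpha/k_s,[m])$, replace each point of $[m]$ by a block of $k_s$ consecutive integers to build a candidate tile $A\subset[n]$ of size $\alpha$ whose first segment has size exactly $k_s$, and correspondingly expand $B'$ together with the forced sub-translation-set $B^*=\{j k_s : 0\le j\le k_r/k_s\}$ (the unique way, by the argument in Lemma~\ref{SegmentRiftSize}, to fill the first rift with translates of the first segment) into a $B$ with $A+B=[n]$, $|B|=\beta$. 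I would check that this map is well-defined (the scaling-and-inserting operation sends a valid tiling of $[m]$ to a valid tiling of $[n]$, using $k_s+k_r\mid k_s\beta$ to get the arithmetic of $|B|$ right, and that the canonical representative is preserved or can be canonicalized), that its image lands in $\mathcal{T}(\alpha,[n],(k_s,k_r))$, and that it is injective. The subtraction of $|\mathcal{T}(\alpha/k_s,[m],(1,\cdot))|$ accounts for exactly those $(A',B')$ whose own first segment has size $1$: after the blow-up these would produce a tile whose first segment has size $k_s$ but whose \emph{second} segment abuts it in a way that either collapses the segment structure or yields a non-canonical / already-counted object, so those must be excluded to keep the map injective into the $(k_s,k_r)$-stratum. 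So I would argue that restricting the domain to $\mathcal{T}(\alpha/k_s,[m])\setminus\mathcal{T}(\alpha/k_s,[m],(1,\cdot))$ makes the blow-up map an injection into $\mathcal{T}(\alpha,[n],(k_s,k_r))$.

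Having the injections, the last step is a disjointness/union argument: every $(A,B)\in\mathcal{T}(\alpha,[n])$ has a well-defined first segment size $k_s=|s_{(1)}|$ which divides $\alpha$ (by Lemma~\ref{SegmentRiftSize} all segments have that size, and there are $\alpha/k_s$ of them), and a well-defined first rift size $k_r=|r_{(1)}|$ which is a multiple of $k_s$ (again Lemma~\ref{SegmentRiftSize}) and satisfies $k_s+k_r\mid k_s\beta$ by counting how the segments and rifts partition $[n]$; moreover $k_r=0$ iff the tiling is the trivial one iff $k_s=\alpha$. Hence $\mathcal{T}(\alpha,[n])$ is the disjoint union of the strata $\mathcal{T}(\alpha,[n],(k_s,k_r))$ over exactly the pairs indexed by $\mathcal{S}$ and $\mathcal{R}_{k_s}$, and $|\mathcal{T}(\alpha,[n],(k_s,k_r))|\ge\Psi_{(\alpha,[n],(k_s,k_r))}$ for each, giving the claim.

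I expect the main obstacle to be verifying that the blow-up map genuinely lands \emph{inside} the $(k_s,k_r)$-stratum and is injective there — in particular pinning down why the ``$-|\mathcal{T}(\alpha/k_s,[m],(1,\cdot))|$'' correction is exactly the right one, i.e.\ that the blown-up images of the size-$1$-first-segment tilings of $[m]$ are precisely the elements that would otherwise be overcounted (either because two different $(A',B')$ map to the same $(A,B)$, or because the image leaves the stratum). This requires a careful look at how the first two segments of the image interact and a matching of the ``extra'' structure back to a smaller tiling. Everything else — the arithmetic conditions defining $\mathcal{S}$ and $\mathcal{R}_{k_s}$, the disjointness of strata, and the reduction of $k_r=0$ to the trivial tiling — should follow directly from Lemma~\ref{SegmentRiftSize} and bookkeeping.
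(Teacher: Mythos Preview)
Your approach is essentially the paper's: handle the trivial cases of $\Psi$ directly, and for the recursive case define a blow-up (what the paper calls the ``key property'' map $f_{k_s,k_r}$) from $\mathcal{T}(\alpha/k_s,[n/(k_s+k_r)])$ restricted to tilings with first segment $\ge 2$ into $\mathcal{T}(\alpha,[n],(k_s,k_r))$, then sum over the disjoint strata.

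Two small corrections. First, in the $k_r=0$ case you wrote $A=[n]$, $B=\{0\}$; this only has $|A|=\alpha$ when $\alpha=n$. The paper (and what you want) is $A=[\alpha]$, $B=\{j\alpha:0\le j\le\beta-1\}$, which is indeed the unique element of $\mathcal{T}(\alpha,[n],(\alpha,0))$. Second, your hedging about \emph{why} the $-|\mathcal{T}(\alpha/k_s,[m],(1,\cdot))|$ term is needed can be sharpened: it is not an injectivity failure but a stratum failure. If $(A',B')$ has first segment of size $1$, then after the blow-up the first segment of $A$ is still $[k_s]$, but the \emph{first rift} has size $k_r+k_{(r,j)}(k_s+k_r)>k_r$, so the image lands in $\mathcal{T}(\alpha,[n],(k_s,k_r'))$ for some $k_r'>k_r$ rather than in the intended stratum. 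Excluding exactly the $(1,\cdot)$ tilings guarantees the second point of $A'$ is $2$, hence both $[k_s]$ and $[k_s+k_r+1,2k_s+k_r]$ lie in $A$, giving first segment size $k_s$ and first rift size exactly $k_r$ as required. The paper states this restriction as ``assume $k_{(s,j)}\ge 2$'' at the outset and builds the map under that hypothesis.
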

\begin{proof}
The definition of $\Psi_{(\alpha,[n],(k_{s},k_{r}))}$ gives us three cases, which are where its value equals either $0$, $1$, or in which its value is based on $|\mathcal{T}(\alpha',[n'])|$ (where the negative term can be seen as subtracting away the case where $k_{s}=1$). The first case, where $|\mathcal{T}(\alpha,C)|=0$, need not be handled for the lower bound, as such cases only reduce the value of the sum. For $|\mathcal{T}(\alpha,C)|=1$, as $k_{r}=0$ and $\alpha|n$, we can always set $A=[\alpha]$ and $B=\{x\cdot\alpha:x\in[0,\beta-1]\}$, resulting in $A+B=C$. Lastly, we handle the case where the value of $\Psi_{(\alpha,[n],(k_{s},k_{r}))}$ is based upon $|\mathcal{T}(\alpha',[n'])|$. To address this case, we define an injective mapping
\[f_{k_{(s,i)},k_{(r,i)}}:\mathcal{T}(\alpha/k_{(s,i)},[n/(k_{(s,i)}+k_{(r,i)})])\rightarrow \mathcal{T}(\alpha,[n],(k_{(s,i)},k_{(r,i)}))\]
where we let $T_{i}=(A_{i},B_{i})\in \mathcal{T}(\alpha,[n],(k_{(s,i)},k_{(r,i)}))$ and $T_{j}=(A_{j},B_{j})\in \mathcal{T}(\alpha/k_{(s,i)},[n/(k_{(s,i)}+k_{(r,i)})])$ and assume $k_{(s,i)},k_{(r,i)}\geq 2$. For notational convenience, we suppress the subscripts of $f$ for the remainder of this proof. We abuse notation slightly and let $f(A_{j})=A_{i}$ (or $f(B_{j})=B_{i}$) if and only if $f(T_{j})=T_{i}$. To construct such an $f$, we map $T_{j}$ to $T_{i}$ such that $m\in A_{j}+B_{j}$ if and only if $[(m-1)(k_{(s,i)}+k_{(r,i)})+1,m(k_{(s,i)}+k_{(r,i)})]\in A_{i}+B_{i}$. We call this the \textbf{\textit{key property}} of $f$. The main idea behind the key property is that, as $T_{i}$ tiles a larger set than $T_{j}$, we can expand each point of $A_{j}$ to be multiple consecutive points in $A_{i}$.\footnote{The opposite direction also holds in that on can compress consecutive points in $A_{i}$ down to single points of $A_{j}$, but this follows from the upper bound, not the lower bound}

To complete the proof, we define $f$ and show it is injective. Let $T_{j}$ be an arbitrary valid tiling in $\mathcal{T}(\alpha/k_{(s,i)},[n/(k_{(s,i)}+k_{(r,i)})])$ such that $k_{(s,j)}\geq 2$. As $T_{j}$ is arbitrary, maintaining the key property forces us to make sure that $[2(k_{(s,i)}+k_{(r,i)})]\subset A_{i}+B_{i}$ for any $A_{i}$ and $B_{i}$ such that $f_{T_{j}}=(A_{i},B_{i})$ for some $j$. Thus, let $[k_{(s,i)}]$ and $[k_{(s,i)}+k_{(r,i)}+1,2k_{(s,i)}+k_{(r,i)}]$ both be subsets of $A_{i}$. Given these facts about $A_{i}$, it follows that $B^{*}=\{x\cdot k_{(s,i)}:x\in[0,k_{(r,i)}/k_{(s,i)}]\}$ is a subset of $B_{i}$. This gives us that $[2(k_{(s,i)}+k_{(r,i)})]\subset A_{i}+B_{i}$ as desired. For other $m\in A_{j}$, we let $[(m-1)(k_{(s,i)}+k_{(r,i)})+1,(m-1)(k_{(s,i)}+k_{(r,i)})+k_{(s,i)}]$ be in $f(A_{j})=A_{i}$ to ensure that key property is maintained. To see this, notice that this implies that $B^{*}+[(m-1)(k_{(s,i)}+k_{(r,i)})+1,(m-1)(k_{(s,i)}+k_{(r,i)})+k_{(s,i)}]=[(m-1)(k_{(s,i)}+k_{(r,i)})+1,m(k_{(s,i)}+k_{(r,i)})]\subset A_{i}+B_{i}$ as desired. For all $b_{(\ell,j)}\in B_{j}$, we have $A_{j}+b_{(\ell,j)}\subset A_{j}+B_{j}$ by definition. Let $m$ be in $A_{j}+b_{(\ell,j)}$ where $b_{(\ell,j)}\neq 0$. Notice, that by adding
\[B^{*}+\bigg\{\frac{b_{\ell,j}(k_{(s,i)}+k_{(r,i)})}{k_{(s,i)}}\bigg\}\]
to $f(B_{j})=B_{i}$, we get that $[(m-1)(k_{(s,i)}+k_{(r,i)})+1,m(k_{(s,i)}+k_{(r,i)})]\subset A_{i}+B_{i}$ as desired.
\end{proof}
To give an example of the above using $A_{i}+B_{i}=[24]$ and $A_{j}+B_{j}=[48]$, the tiling $T_{(k',j)}=(\{1,3,9,11\},\{0,1,12,13\})$ would map to \[T_{(k,i)}=(\{1,2,5,6,17,18,21,22\},\{0,2,24,26\})\] via $f$. In order to prove that the sum from Lemma \ref{TightBound} (along with the given definitions for $\mathcal{S}$ and $\mathcal{R}_{k_{s}}$) act as an upper bound to the number distinct tilings of $C=[n]$, we require the following definition.
\begin{definition}\label{meta-points}
For $(A,B)\in\mathcal{T}(\alpha,C)$, let the $x^{\text{th}}$ meta-point of $C$ with respect to $A$ (denoted by $x^{*}$) be the set $[(x-1)(k_{s}+k_{r})+1,x(k_{s}+k_{r})]$. We use the terminology of segments to refer to consecutive sets of meta-points in $C$ and refer to these as \textbf{meta-segments} (i.e. $s^{*}_{i}$). We extend the idea of rifts to \textbf{meta-rifts} (i.e. $r^{*}_{i}$) similarly. More formally we have that
\begin{itemize}
\item $s^{*}_{1}\triangleq\big\{x^{*}\subset A:\textup{max}[x^{*}]<\textup{min}[C\setminus A]\big\}$
\item $r^{*}_{1}\triangleq\Big\{x^{*}\subset C\setminus A:\textup{max}[x^{*}]<\textup{min}\big[A\setminus s^{*}_{1}\big]\Big\}$.
\item $s^{*}_{i}\triangleq\bigg\{x^{*}\subset A:\textup{max}[s^{*}_{i-1}]<x<\textup{min}\Big[(C\setminus A)\setminus \big\{y\in C:y\leq \textup{max}[r^{*}_{i-1}]\big\}\Big]\bigg\}$
\item $r^{*}_{i}\triangleq\Bigg\{x^{*}\subset C\setminus A:x^{*}\subset\bigg(\textup{max}[r^{*}_{i-1}],\textup{min}\bigg[A\setminus \Big(\bigcup\limits_{k=1}^{i-1} s^{*}_{k}\Big)\bigg]\bigg)\Bigg\}$.
\end{itemize}
\end{definition}
\begin{lemma}\label{RecurUpperBound}
Suppose that $\alpha|n$, $k_{s}\in\mathcal{S}$, and $k_{r}\in\mathcal{R}_{k_{s}}\setminus\{0\}$. Then it follows that
\[\Psi_{(\alpha,[n],(k_{s},k_{r}))}=\bigg|\mathcal{T}\bigg(\alpha/k_{s},\recurfrac\bigg)\bigg|-\bigg|\mathcal{T}\bigg(\alpha/k_{s},\recurfrac ,(1,\cdot)\bigg)\bigg|=\big|\mathcal{T}(\alpha,[n],(k_{s},k_{r}))]\big|.\]
In addition, no selection of $k_{s}$ and $k_{r}$ such that $k_{s}\not\in\mathcal{S}$ and $k_{r}\not\in \mathcal{R}_{k_{s}}$ has any valid tilings associated with them. 
\end{lemma}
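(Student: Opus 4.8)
The plan is to establish the two assertions separately. The second one---that $\mathcal{T}(\alpha,[n],(k_s,k_r))=\emptyset$ whenever $k_s\notin\mathcal{S}$ or $k_r\notin\mathcal{R}_{k_s}$---amounts to showing that the defining conditions of $\mathcal{S}$ and $\mathcal{R}_{k_s}$ are \emph{necessary} for a tiling with first segment of size $k_s$ and first rift of size $k_r$ to exist. For the first assertion, recall that Lemma \ref{LowerBound} already produces an injection $f$ from the tilings of $[n/(k_s+k_r)]$ by tiles of size $\alpha/k_s$ whose first segment has size at least $2$ into $\mathcal{T}(\alpha,[n],(k_s,k_r))$, giving $|\mathcal{T}(\alpha,[n],(k_s,k_r))|\ge\Psi_{(\alpha,[n],(k_s,k_r))}$; so it remains to prove the reverse inequality, which I will get by showing that $f$ is also \emph{surjective} onto $\mathcal{T}(\alpha,[n],(k_s,k_r))$. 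Combining the two inequalities with the already-handled cases $\alpha\nmid n$ and $k_r=0$ then gives $\Psi_{(\alpha,[n],(k_s,k_r))}=|\mathcal{T}(\alpha,[n],(k_s,k_r))|$ in full.

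\textbf{Necessity of the conditions.} Fix $(A_i,B_i)\in\mathcal{T}(\alpha,[n],(k_s,k_r))$. By Lemma \ref{SegmentRiftSize} every segment has size $k_s$ and every rift has size divisible by $k_s$; since $A_i$ is the disjoint union of its segments, $k_s\mid\alpha$, that is $k_s\in\mathcal{S}$, and applying the rift statement to $r_1$ gives $k_s\mid k_r$. If $k_s=\alpha$ then $s_1=A_i$, so $A_i=[\alpha]$ and there is no rift following it, i.e.\ $k_r=0$; conversely, distinct consecutive segments of a tiling are separated by a nonempty rift, so $k_r=0$ forces $A_i$ to be a single segment and $k_s=\alpha$. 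This is the biconditional $(k_s=\alpha)\Longleftrightarrow(k_r=0)$. The remaining condition $k_s+k_r\mid k_s\beta$ I obtain as a byproduct of the structural result below: it is exactly the assertion that the translation set of the compressed tiling has integer size $k_s\beta/(k_s+k_r)$. Finally, if $\alpha\nmid n$ there are no valid tilings at all (Definition \ref{TilingDef}), matching $\Psi=0$, and when $k_r=0$ (so $k_s=\alpha$) the only tiling is $A=[\alpha]$, $B=\{\alpha y:y\in[0,\beta-1]\}$, matching $\Psi=1$.

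\textbf{The structural result and surjectivity.} Assume now $k_s\in\mathcal{S}$, $k_r\in\mathcal{R}_{k_s}$, $k_r\ne0$, and let $(A_i,B_i)\in\mathcal{T}(\alpha,[n],(k_s,k_r))$. I will prove, by a simultaneous induction on the meta-point index, that $(A_i,B_i)$ respects the meta-point grid of Definition \ref{meta-points}: for each $x$, either $A_i\cap x^{*}=\emptyset$ or $A_i\cap x^{*}$ is the run of the first $k_s$ points of $x^{*}$, and $B_i=B^{*}+(k_s+k_r)\cdot B_j$ where $B^{*}=\{yk_s:y\in[0,k_r/k_s]\}$ and $B_j=\{y\ge0:(k_s+k_r)y\in B_i\}$. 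The base case is the argument from the proof of Lemma \ref{SegmentRiftSize}: the first rift is a run of $k_r$ consecutive points immediately after the run $s_1=[k_s]$, it can only be covered by translates of $s_1$ (any smaller or overlapping translate collides with $s_1$, any later segment lies entirely past it), and the unique disjoint way to do this is via $B^{*}$, which puts $1^{*}$ in the stated form and places $B^{*}\subseteq B_i$. In the inductive step one looks at the first point of the next meta-point $x^{*}$: if it lies in $A_i$ it starts a segment of size exactly $k_s$ (Lemma \ref{SegmentRiftSize}), and the $B^{*}$-translates of the already-placed aligned structure leave room for it only in the first $k_s$ slots of $x^{*}$, with $B^{*}$ again filling the remainder; if it does not lie in $A_i$ it must be covered by a translate of an earlier segment, and grid-alignment of that segment together with $B^{*}\subseteq B_i$ and a collision argument force the translation to be a multiple of $k_s+k_r$ off an earlier aligned segment, so $x^{*}$ becomes a gap meta-point covered entirely by a single $(k_s+k_r)$-shifted copy of $B^{*}$. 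Given the structural result, setting $A_j=\{x:x^{*}\cap A_i\ne\emptyset\}$ makes $(A_j,B_j)$ the canonical representative of a valid tiling of $[n/(k_s+k_r)]$ by tiles of size $|A_j|=\alpha/k_s$ with $f(A_j,B_j)=(A_i,B_i)$ (which also yields $k_s\beta/(k_s+k_r)=|B_j|\in\mathbb{Z}$, i.e.\ $k_r\in\mathcal{R}_{k_s}$), so $f$ is surjective. To see that the image is exactly the tilings with first segment of size at least $2$, note that $(A_i,B_i)$ has first rift of size exactly $k_r$ iff the first point of $2^{*}$ lies in $A_i$ iff $2\in A_j$, i.e.\ iff $(A_j,B_j)$ has first segment containing $\{1,2\}$; if instead $(A_j,B_j)$ has first segment $\{1\}$, blowing it up produces a tiling whose first rift has size $k_{(r,j)}(k_s+k_r)+k_r>k_r$, which therefore does not lie in $\mathcal{T}(\alpha,[n],(k_s,k_r))$. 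Hence $|\mathcal{T}(\alpha,[n],(k_s,k_r))|=|\mathcal{T}(\alpha/k_s,[n/(k_s+k_r)])|-|\mathcal{T}(\alpha/k_s,[n/(k_s+k_r)],(1,\cdot))|=\Psi_{(\alpha,[n],(k_s,k_r))}$.

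\textbf{Main obstacle.} The crux is the inductive structural claim: that every valid tiling is forced onto the meta-point grid, with no segment beginning in the interior of a meta-point and no element of $B_i$ escaping the coset structure $B^{*}+(k_s+k_r)\mathbb{Z}$. The delicate interaction is between the ``micro'' translates $B^{*}$, which tile within a single meta-point, and the ``macro'' translates, which move whole meta-point patterns; ruling out a hybrid translate---one shifting a segment by an amount that is neither a multiple of $k_s$ below $k_s+k_r$ nor a multiple of $k_s+k_r$---requires combining maximality of segments with a collision argument against the already-placed translates of $s_1$, carried out carefully enough to sustain the induction. The small cases $k_s=1$ or $k_r=k_s$, where ``segments'' and ``rifts'' degenerate to short runs, are routine once the general induction is in place.
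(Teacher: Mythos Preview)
Your proposal is correct and follows essentially the same route as the paper: both arguments show that every tiling in $\mathcal{T}(\alpha,[n],(k_s,k_r))$ is forced onto the meta-point grid (segments occupy exactly the first $k_s$ slots of certain meta-points, $B^{*}\subseteq B_i$, and all remaining translates are multiples of $k_s+k_r$), then read off the bijection with the smaller instance minus the $(1,\cdot)$ tilings. The paper's proof is organized as a case analysis on how the point $2(k_s+k_r)+1$ (and successors) can be covered---either by a new segment $s_3$ or by a macro-translate of $s_1$---whereas you package the same dichotomy as an explicit induction on the meta-point index and frame the conclusion as surjectivity of the map $f$ from Lemma~\ref{LowerBound}; your version also makes the decomposition $B_i=B^{*}+(k_s+k_r)B_j$ explicit, which the paper leaves implicit. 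These are presentational differences, not mathematical ones.
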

\begin{proof}
Lemma \ref{SegmentRiftSize} justifies the first restriction of $\mathcal{S}$, as $A$ is made up of segments, so if each segment has cardinality $k_{s}$, then it must be the case that $k_{s}|\alpha$. Further, from the proof of Lemma \ref{SegmentRiftSize}, we can conclude that $(s_{1}\cup s_{2})+B^{*}$ must tile exactly $[2(k_{s}+k_{r})]$ in any valid tiling as done in Lemma \ref{LowerBound}. For example, if $s_{1}=\{1,2\}$ and $s_{2}=\{7,8\}$, we know that $\{0,2,4\}\subset B$ as these are necessary to tile $r_{1}=[3,6]$ with translates of $s_{1}$. These elements of $B$ then also sum with the elements $s_{2}$ so that $(s_{1}\cup s_{2})+\{0,2,4\}=[12]$. Unless $C=[12]$, there are two possible ways the next elements of $C$ (i.e. $\{13, 14\}$) can be tiled. Either these elements are in $s_{3}$ or they are tiled by further translates of $s_{1}$. As we will see below, this decision for $C=[n]$ in this example ends up being akin to the choice of whether or not to include $3$ in $A$ for $C=[n/(k_{s}+k_{r})]=[n/6]$ (and with the size of $A$ reduced by a factor of $k_{s}=2$).

If $n=2(k_{s}+k_{r})$, we have found the unique valid tiling for this $k_{s}$ and $k_{r}$. In terms of meta-points, this case corresponds to tiling the set $\{1^{*},2^{*}\}$, where $1^{*}=[k_{s}+k_{r}]$ and $2^{*}=[k_{s}+k_{r}+1,2(k_{s}+k_{r}]$. Consider the case of $n=\ell(k_{s}+k_{r})$ for $\ell>2$. There are two ways to tile $2(k_{s}+k_{r})+1$ in $A+B$. Either $2(k_{s}+k_{r})+1\in s_{3}$ or $2(k_{s}+k_{r})+1\in s_{1}+b_{i}$ for some $b_{i}\in B$. It cannot be the case that $2(k_{s}+k_{r})+1\in s_{2}+b_{i}$ for some $b_{i}\in B$, as this would imply that $(s_{1}+b_{i})\cap[2(k_{s}+k_{r})]\neq \emptyset$ which would not yield a valid tiling. Suppose $2(k_{s}+k_{r})+1\in s_{3}$. It follows that $(s_{1}\cup s_{2}\cup s_{3})+B^{*}=[3(k_{s}+k_{r})]$. The number of times we repeat this process determines $|s_{i}^{*}|$, as each such decision to add $s_{i}$ for a new $i$ as soon as possible essentially adds one new point to the first meta-segment. Suppose $2(k_{s}+k_{r})+1\in s_{1}+b_{i}$. It follows that $\min[s_{3}]>4(k_{s}+k_{r})$. This is because the number of elements between $s_{1}+b_{i}$ and $s_{2}+b_{i}$ is $2(k_{s}+k_{r})-k_{s}$, but $s_{3}+B^{*}$ is a set of $2(k_{s}+k_{r})$ consecutive elements. Thus, $s_{3}$ (and by extension, $s_{3}+B^{*}$) cannot appear until at least $4(k_{s}+k_{r})+1$. Thus, this decision of how to tile $2(k_{s}+k_{r})+1$ leads to a meta-point being added to a meta-rift.

The choice between the two options outlined above as to how to tile $2(k_{s}+k_{r})+1$ is repeated once every $k_{s}+k_{r}$ elements and are the only ones, as repeating the case analysis from above leads to similar contradictions. As these potential tilings align exactly with those in Lemma \ref{LowerBound}, we know these tilings are valid. Thus, the solutions as to how to tile $[n]$ are exactly the ways to tile $[n/(k_{s}+k_{r})]$ with a valid tiling for which $k_{(s,j)}\geq 2$ (which is accounted for by the negative term with $k_{(s,j)}$ fixed to $1$). To justify the second restriction (i.e. $k_{s}+k_{r}|k_{s}\beta$) of $\mathcal{R}_{k_{s}}$ notice that, due to the fact that segments are of length $k_{s}$ and the definition of $B^{*}$, we have that $k_{s}+k_{r}$ elements are grouped into meta-points and are either tiled or not tiled as a group. Once $r_{1}$ is tiled by translates of $s_{1}$, the first $a\cdot (k_{s}+k_{r})/k_{s}$ elements of $C$ will be tiled. Thus, it must be that $(\alpha\cdot\mathcal{T}(k_{s}+k_{r})/k_{s})|\alpha\beta$, as otherwise translates of these $a\cdot (k_{s}+k_{r})/k_{s}$ elements could not tile $C$. This divisibility requirement simplifies to the restriction $(k_{s}+k_{r})|k_{s}\beta$ as required. For the last restriction to elements of $\mathcal{R}_{k_{s}}$, its necessity follows from the definition of segments and rifts.
\end{proof}
We now handle the other two cases for $\Psi_{(\alpha,[n],(k_{s},k_{r}))}$ relevant to the upper bound.
\begin{lemma}\label{NonRecurUpperBound}
For some $\mathcal{T}(\alpha,[n],(k_{s},k_{r}))$, if $\alpha\nmid n$, then $|\mathcal{T}(\alpha,[n],(k_{s},k_{r}))|=0$. Otherwise, if $k_{r}=0$, then $|\mathcal{T}(\alpha,[n],(k_{s},k_{r}))|=1$.
\end{lemma}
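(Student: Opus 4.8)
The plan is to dispatch the two cases of the statement separately; each is a short unwinding of the definitions. For the first case, suppose $\alpha \nmid n$. By Definition~\ref{TilingDef} every valid tiling $(A,B)$ of $[n]$ satisfies $n = |[n]| = \alpha\beta$, so $\alpha \mid n$; taking the contrapositive, $\alpha \nmid n$ forces $\mathcal{T}(\alpha,[n]) = \emptyset$. Since $\mathcal{T}(\alpha,[n],(k_s,k_r))$ is by definition a subset of $\mathcal{T}(\alpha,[n])$, it is empty for every choice of $k_s$ and $k_r$, i.e.\ it has cardinality $0$. I expect no obstacle here.

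For the second case, assume $\alpha \mid n$, so $\beta = n/\alpha \in \mathbb{Z}^{+}$, and $k_r = 0$. First I would exhibit the expected tiling: take $A = [\alpha]$ and $B = \{j\alpha : 0 \le j \le \beta - 1\}$. One checks that $0 \in B$ and $\min[B] = 0 \ge 0$, so $(A,B)$ is the canonical representative of its congruence class; that $A + B = [n]$ with $|[n]| = \alpha\beta$, so it is a valid tiling in $\mathcal{T}(\alpha,[n])$; and that its first segment is $[1,\alpha]$ while its first rift is empty, so it lies in $\mathcal{T}(\alpha,[n],(\alpha,0))$. For uniqueness, let $(A,B) \in \mathcal{T}(\alpha,[n],(k_s,0))$ be arbitrary. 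Canonicity gives $\min[B] = 0$, and since the minimum of a sumset of integers is the sum of the minima, $A + B = [n]$ forces $\min[A] = 1$; hence $s_1$ is the maximal initial run $[1,k_s] \subseteq A$, with $k_s + 1 \notin A$ unless $A = [1,k_s]$ already. If $k_s < \alpha$, then $s_1$ is a proper subset of $A$, so $k_s + 1 \notin A$ by maximality of the initial run, and $k_s + 1 \le \alpha \le n$ gives $k_s + 1 \in [n] \setminus A$ with $k_s + 1 < \min[A \setminus s_1]$; by the definition of the first rift this puts $k_s + 1 \in r_1$, contradicting $k_r = 0$. Therefore $k_s = \alpha$, so $[1,\alpha] \subseteq A$, and $|A| = \alpha$ gives $A = [\alpha]$. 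Finally, the translation set of a valid tiling is uniquely determined by its tile (as noted after Definition~\ref{TilingOrder}), so $(A,B)$ is exactly the tiling exhibited above. Thus $|\mathcal{T}(\alpha,[n],(\alpha,0))| = 1$, while $\mathcal{T}(\alpha,[n],(k_s,0)) = \emptyset$ for $k_s \ne \alpha$ --- a case that never arises in the sum of Lemma~\ref{TightBound}, since $\mathcal{R}_{k_s}$ pairs $k_r = 0$ only with $k_s = \alpha$.

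The closest thing to an obstacle is the degenerate behavior of the segment and rift definitions in the extremal tiling $A = [\alpha]$ (and more generally whenever $A \setminus s_1 = \emptyset$), where $\min[A \setminus s_1]$ is literally undefined. There I would simply observe that in that situation $[n] \setminus A = [\alpha+1, n]$ contains nothing below the start of any further segment, so $r_1 = \emptyset$ and all higher segments and rifts are empty as well, which is consistent with the witness tiling above. Everything else is a direct check against the definitions, so I anticipate the proof being short.
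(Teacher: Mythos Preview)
Your proposal is correct and follows essentially the same route as the paper: both dispatch $\alpha\nmid n$ by the divisibility constraint in Definition~\ref{TilingDef}, and both handle $k_r=0$ by arguing that $A$ must equal $[\alpha]$ and then that $B$ is forced. The only cosmetic difference is that you justify $A=[\alpha]$ more carefully (via the contradiction $k_s<\alpha\Rightarrow r_1\neq\emptyset$) and then invoke the paper's remark after Definition~\ref{TilingOrder} for the uniqueness of $B$, whereas the paper asserts $A=[\alpha]$ directly and instead spells out an element-by-element induction on $B$; neither variation changes the substance of the argument.
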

\begin{proof}
If $a\nmid n$, then $|C|=\alpha\beta$ is impossible and the lemma holds. If $k_{r}=0$, then $r_{1}=\emptyset$ and $A=[\alpha]$. Consider trying to change $B$ from $B=\{x\cdot \alpha:x\in[0,b-1]\}$ as defined in Lemma \ref{LowerBound}. We attempt to do this via induction on the elements of $B$. The base case would be to change $0$, but this is not allowed by the definition of $\mathcal{T}$. Suppose that $b_{i}$ is the first element that should be adjusted and assume without loss of generality that we cannot reduce it below $b_{i-1}+1$ or increase it to be greater then $b_{i+1}-1$. If we increase $b_{i}$, then $b_{i}+1$ is no longer in $A+B$ which is a contradiction. If we decrease $b_{i}$, then $A+b_{i-1}\cap A+b_{i}\neq\emptyset$. Thus, $b_{i}$ cannot be changed while still yielding a valid tiling.
\end{proof}
With the prior results of this section in hand, Lemma \ref{TightBound} immediatly follows.
\begin{proof}[Proof of Lemma \ref{TightBound}]
Lemma \ref{RecurUpperBound} bounds shows equality in the recursive case of $\Psi_{(\alpha,[n],(k_{s},k_{r}))}$ and justifies the restrictions to $\mathcal{S}$ and $\mathcal{R}_{k_{s}}$. Lemma \ref{NonRecurUpperBound} shows equality in the other two cases of $\Psi_{(\alpha,[n],(k_{s},k_{r}))}$. Summing over all valid $k_{s}$ and $k_{r}$ yields the lemma.
\end{proof}
While the formula from Lemma \ref{TightBound} is the one we use to prove our upper bound, it is not in a convenient form for the purpose of lower bound analysis. Thus, we give an alternative formula that we prove to be equivalent and which we use to prove our lower bound.
\begin{lemma}\label{TightBound2}
Let $\mathcal{P}_{(n,k)}$ be the set of products of $k$ distinct prime divisors of $n$.
\[|\mathcal{T}(\alpha,[n])|=\sum\limits_{k\in [n]}\sum\limits_{v\in\mathcal{P}_{(n,k)}}(-1)^{k+1}\bigg(|\mathcal{T}(\alpha,[n/v])|+|\mathcal{T}(\alpha/v,[n/v])|\bigg).\]
\end{lemma}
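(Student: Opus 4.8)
The plan is to reduce the identity to two ingredients: a tile/translation duality identifying $\mathcal{T}(\alpha,[n])$ with $\mathcal{T}(n/\alpha,[n])$, and a Möbius inversion of a divisor-indexed sum extracted from Lemma~\ref{TightBound} and Lemma~\ref{RecurUpperBound}. Throughout I assume $\alpha\mid n$ and $n\geq 2$ (if $\alpha\nmid n$ both sides vanish, since $\alpha\nmid n/v$ for every $v\mid n$ while $\alpha/v\mid n/v$ would force $\alpha\mid n$; the case $n=1$ is degenerate and excluded), and I write $\beta=n/\alpha$, $N_{1}(a,m):=|\mathcal{T}(a,[m],(1,\cdot))|$, and $\Phi(a,e):=|\mathcal{T}(a,[ae])|-N_{1}(a,ae)$ --- the number of tilings of $[ae]$ by $a$-element tiles whose first segment has at least two points (all segments of a tiling share a common size by Lemma~\ref{SegmentRiftSize}).

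First I would record the duality: shifting $B$ up by one to obtain the new tile and $A$ down by one to obtain the new translation set is a bijection $\mathcal{T}(\alpha,[n])\to\mathcal{T}(\beta,[n])$ whose inverse is given by the same rule, since the Minkowski sum and the cardinalities are unchanged. In particular $|\mathcal{T}(\gamma,[M])|=|\mathcal{T}(M/\gamma,[M])|$ for $\gamma\mid M$, and a tiling $(A,B)$ has $1\notin B$ exactly when its image has a singleton first segment. Next I would prove the dichotomy: for every $(A,B)\in\mathcal{T}(\alpha,[n])$ with $n\geq 2$, exactly one of ``the first segment of $A$ is a single point'' (i.e.\ $k_{s}=1$) and ``$1\notin B$'' holds. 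They cannot both hold, because when $k_{s}=1$ the forced initial block $B^{*}=\{jk_{s}:0\leq j\leq k_{r}/k_{s}\}$ from the proofs of Lemma~\ref{SegmentRiftSize} and Lemma~\ref{RecurUpperBound} equals $\{0,1,\dots,k_{r}\}$ with $k_{r}\geq 1$, so $1\in B$; and at least one holds, because if $k_{s}\geq 2$ and $1\in B$ then the distinct translates $A$ and $A+1$ both contain $2\in[k_{s}]\subseteq A$, violating exactness of the tiling. Partitioning $\mathcal{T}(\alpha,[n])$ according to this dichotomy and carrying the ``$1\notin B$'' class through the duality yields the key identity
\[|\mathcal{T}(\alpha,[n])|=N_{1}(\alpha,n)+N_{1}(\beta,n).\]

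It remains to expand $N_{1}$. By Lemma~\ref{RecurUpperBound} with first-segment size $1$, each set $\mathcal{T}(a,[m],(1,k_{r}))$ has $|\mathcal{T}(a,[m/(1+k_{r})])|-N_{1}(a,m/(1+k_{r}))=\Phi(a,(m/a)/(1+k_{r}))$ elements, and as $k_{r}$ runs over $\mathcal{R}_{1}\setminus\{0\}$ the integer $1+k_{r}$ runs over the divisors of $m/a$ that exceed $1$; summing over $k_{r}$ and then adding $\Phi(a,m/a)$ to both sides gives, for $a\geq 2$, the identity $|\mathcal{T}(a,[m])|=\sum_{e\mid m/a}\Phi(a,e)$. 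Hence $\gamma\mapsto|\mathcal{T}(a,[a\gamma])|$ is the Dirichlet summatory function of $\gamma\mapsto\Phi(a,\gamma)$, and Möbius inversion together with $N_{1}(a,m)=|\mathcal{T}(a,[m])|-\Phi(a,m/a)$ gives
\[N_{1}(a,m)=\sum_{\substack{d\mid m/a,\ d>1\\ d\ \text{squarefree}}}(-1)^{\omega^{*}(d)+1}\,|\mathcal{T}(a,[m/d])|\]
(both sides equal $1$ when $a=1$, $m\geq 2$, which disposes of the last degenerate cases). I would apply this with $(a,m)=(\alpha,n)$ and with $(a,m)=(\beta,n)$, in the latter case rewriting $|\mathcal{T}(\beta,[n/d])|=|\mathcal{T}(\alpha/d,[n/d])|$ via the duality, and add the two resulting expansions; by the key identity this expresses $|\mathcal{T}(\alpha,[n])|$ as a sum of $(-1)^{\omega^{*}(d)+1}|\mathcal{T}(\alpha,[n/d])|$ over squarefree $d\mid\beta$ with $d>1$, plus a sum of $(-1)^{\omega^{*}(d)+1}|\mathcal{T}(\alpha/d,[n/d])|$ over squarefree $d\mid\alpha$ with $d>1$. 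Each of these extends to all squarefree $d\mid n$ with $d>1$ without change --- the added terms vanish because $\alpha\nmid n/d$, respectively $\alpha/d\notin\mathbb{Z}^{+}$ --- and regrouping the squarefree divisors of $n$ by their number $k$ of prime factors, i.e.\ writing $d=v\in\mathcal{P}_{(n,k)}$, produces exactly the stated formula.

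I expect the dichotomy of the second paragraph, together with pinning down the duality bijection precisely enough to feed it, to be the main obstacle: it is the only place the structural theory of the earlier lemmas is genuinely needed, and both halves --- excluding $k_{s}=1$ simultaneously with $1\notin B$, and excluding $k_{s}\geq 2$ simultaneously with $1\in B$ --- have to be squeezed out of exactness of the tiling and the forced block $B^{*}$. The remainder is Dirichlet/Möbius bookkeeping; the only other point requiring care is the tile-size-$1$ boundary, where the summatory identity $|\mathcal{T}(a,[m])|=\sum_{e\mid m/a}\Phi(a,e)$ fails and has to be checked by hand.
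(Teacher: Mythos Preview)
Your proof is correct, and it takes a genuinely different route from the paper's.

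The paper argues by constructing two explicit families of bijections $f_{1}$ and $f_{2}$: for each prime $p\mid n$, $f_{1}$ injects $\mathcal{T}(\alpha,[n/p])$ onto those tilings of $[n]$ with $k_{s}=1$ and $p\mid(k_{s}+k_{r})$, while $f_{2}$ injects $\mathcal{T}(\alpha/p,[n/p])$ onto those with $p\mid k_{s}$. These images cover $\mathcal{T}(\alpha,[n])$ but overlap across different primes, and the inclusion--exclusion over products of distinct primes is then done by hand. Your argument instead uses the $A\leftrightarrow B$ duality of Corollary~\ref{abSwap} together with the dichotomy ``$k_{s}=1$'' versus ``$1\notin B$'' to obtain the clean split $|\mathcal{T}(\alpha,[n])|=N_{1}(\alpha,n)+N_{1}(\beta,n)$, and then recognises $|\mathcal{T}(a,[a\,\cdot\,])|$ as the Dirichlet summatory function of $\Phi(a,\cdot)$, so that a single M\"obius inversion delivers the inclusion--exclusion automatically.

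What each buys: the paper's bijections $f_{1},f_{2}$ are reused verbatim in the proof of Lemma~\ref{TightBound2PartialLowerBound}, so defining them here is an investment that pays off later in the lower-bound analysis; if you follow your route you would have to introduce those maps separately when you reach that lemma. On the other hand, your approach is more structural --- once the dichotomy is established, everything reduces to standard Dirichlet-convolution bookkeeping, and the passage from ``squarefree $d\mid\beta$'' (resp.\ $d\mid\alpha$) to ``squarefree $d\mid n$'' via vanishing terms is transparent. One small remark: in the dichotomy, your argument that $k_{s}=1$ forces $1\in B$ invokes $k_{r}\geq 1$, which fails when $\alpha=1$; but then $B=[0,n-1]$ and $1\in B$ directly, so the dichotomy still holds and your later handling of the $a=1$ boundary covers it.
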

\begin{proof}
Let $k_{(s,i)}$ and $k_{(r,i)}$ be the size of the first segment and rift of the sumset tile $(A_{i},B_{i})$. We prove this by showing this formula's equivalence to the formula from Lemma \ref{TightBound}. We break the tilings from Lemma \ref{TightBound} into two case: tilings of $\mathcal{T}(\alpha,[n])$ such that $k_{s}>1$ and tilings such that $k_{s}=1$. We define a function $f_{1}$ that takes as input $(\alpha,[n])$ and an element of $\mathcal{T}(\alpha,[n/v])$, and outputs an element of $\mathcal{T}(\alpha,[n])$ such that $v|(k_{s}+k_{r})$ and $k_{s}=1$. Further we prove that, for any $(A_{i},B_{i})\in \mathcal{T}(\alpha,[n])$ such that $v|(k_{(s,i)}+k_{(r,i)})$ and $k_{(s,i)}=1$, there exists a unique $(A_{j},B_{j})\in \mathcal{T}(\alpha,[n/v])$ such that $f_{1}((\alpha,[n]),(A_{j},B_{j}))=(A_{i},B_{i})$. Similarly, we define a function $f_{2}$ that takes as input $(\alpha,[n])$ and an element of $\mathcal{T}(\alpha/v,[n/v])$, and outputs an element of $\mathcal{T}(\alpha,[n])$ such that $v|k_{s}$. Further we prove that, for any $(A_{i},B_{i})\in \mathcal{T}(\alpha,[n])$ such that $v|k_{(s,i)}$, there exists a unique $(A_{j},B_{j})\in \mathcal{T}(\alpha/v,[n/v])$ such that $f_{2}((\alpha,[n]),(A_{j},B_{j}))=(A_{i},B_{i})$.

We begin with $f_{1}$. For any $(A_{j},B_{j})\in \mathcal{T}(\alpha,[n/v])$ such that $f_{1}((\alpha,[n]),(A_{j},B_{j}))=(A_{i},B_{i})$, we let $A_{i}=\{x:x/v\in A_{j}\}$ and $B_{i}=\{y:y/v\in B_{j}\}+[0,v]$. $k_{(s,i)}=1$ follows from the fact that $v\geq 2$. As for $d|(k_{(s,i)}+k_{(r,i)})$, one can see from the definition of $B_{i}$ that the first rift must end at some multiple of $v$ which implies that $v|(k_{(s,i)}+k_{(r,i)})$. For any $(A_{i},B_{i})\in \mathcal{T}(\alpha,[n])$ such that $v|(k_{(s,i)}+k_{(r,i)})$ and $k_{(s,i)}=1$, it follows from Lemma \ref{TightBound} that the single point segments of $(A_{i},B_{i})$ occur only at positions $t$ such that $p_{i}|(t-1)$ (this follows from the division of $n$ by $k_{s}+k_{r}$ in the last case of the definition of $\Psi_{(\alpha,[n],(k_{s},k_{r}))}$). Thus, $f_{1}$ is invertable and the claim follows. As for $f_{2}$, for any $(A_{j},B_{j})\in \mathcal{T}(\alpha/v,[n/v])$ such that $f_{1}((\alpha,[n]),(A_{j},B_{j}))=(A_{i},B_{i})$, we let $A_{i}=\{x:\lfloor x/v\rfloor\in A_{j}\}$ and $B_{i}=\{y:y/v\in B_{j}\}$. $v|k_{(s,i)}$ follows from the fact that $\lfloor x/v\rfloor$ has the same value for every $v$ consecutive values of $x$. For any $(A_{i},B_{i})\in \mathcal{T}(\alpha,[n])$ such that $v|k_{(s,i)}$, it follows from Lemma \ref{TightBound} that every segment and rift is divisible by $v$. Thus, $f_{2}$ is invertable and the claim follows.

Let $\{p_{1},\ldots, p_{m}\}$ be the prime divisors of $n$. Notice that $f_{1}$ and $f_{2}$ map to disjoint subsets of $\mathcal{T}(\alpha,[n])$, but that the union of their codomains on inputs from $\mathcal{T}(\alpha,[n/p_{i}])$ and $\mathcal{T}(\alpha/p_{i},[n/p_{i}])$ respectively for all $i\in[m]$ is exactly $\mathcal{T}(\alpha,[n])$. The issue then in simply summing the size of these codomains is that an element of $\mathcal{T}(\alpha,[n/p_{i}])$ and an element of $\mathcal{T}(\alpha,[n/p_{j}])$ for $i\neq j$ may map to the same element $(A_{z},B_{z})\in \mathcal{T}(\alpha,[n])$ by $f_{1}$. By the definition of $f_{1}$, this would imply that $p_{i}p_{j}|(k_{(s,z)}+k_{(r,z)})$ and $k_{(s,z)}=1$, which means we can remove the over counting by subtracting cases for which $v$ is composed of $2$ distinct prime factors of $n$ (though now we may be under counting). More generally, we can apply the inclusion-exclusion principle with respect to the number of prime factors of $v$ to arrive at an exact count as desired. 
\end{proof}

\section{Upper and Lower Bound Calculations}
In this section, we have three primary results: 
\begin{enumerate}
\item A partial order on $|\mathcal{T}(\alpha,[n])|$ with respect to $\alpha$.
\item A upper bound on $|\mathcal{T}(\alpha,[n])|$ for sufficiently large $n$ utilizing $\Psi_{(\alpha,[n],(k_{s},k_{r}))}$.
\item A super-polynomial lower bound on $|\mathcal{T}(\alpha,[n])|$ for specific $\alpha$ and infinitely many $n$, proved by analyzing the formula from Lemma \ref{TightBound2}.
\end{enumerate}
Prior to beginning the proof (or series of proofs) necessary to prove each of these results, we provide a high level outline as to our approach. Further, before we dive into any of these, we wish to establish the following useful corollary.
\begin{cor}\label{abSwap}
$\mathcal{T}(\alpha,[n])=\mathcal{T}(\beta,[n])$.
\end{cor}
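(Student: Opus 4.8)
The plan is to exhibit an explicit bijection between $\mathcal{T}(\alpha,[n])$ and $\mathcal{T}(\beta,[n])$ obtained by interchanging the roles of the tile and the translation set, where $\beta$ is the value determined by $\alpha\beta=n$. We may assume $\alpha\mid n$, so that $\beta=n/\alpha\in\mathbb{Z}^{+}$; otherwise no set of size $\alpha$ can tile $[n]$ (by the condition $|C|=\alpha|B|$ of Definition \ref{TilingDef}), so $\mathcal{T}(\alpha,[n])=\emptyset$ and the statement is vacuous.

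The key observation is that the Minkowski sum is commutative, so $(A,B)$ is a valid tiling of $[n]$ if and only if $(B,A)$ is: indeed $B+A=A+B=[n]$ and $|[n]|=n=\beta\alpha$. The only thing preventing the assignment $(A,B)\mapsto(B,A)$ from mapping $\mathcal{T}(\alpha,[n])$ into $\mathcal{T}(\beta,[n])$ is that $(B,A)$ need not be the canonical representative of its congruence class. To correct for this, I would first note that for $(A,B)\in\mathcal{T}(\alpha,[n])$ canonicity forces $\min B=0$, and since $\min(A+B)=\min A+\min B$ for finite subsets of $\mathbb{Z}$ while $A+B=[n]$, this gives $\min A=1$. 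Consequently $(B+1,\,A-1)$ satisfies $0\in A-1$, $\min(A-1)=0$, and $(B+1)+(A-1)=A+B=[n]$, so it is precisely the canonical representative of the congruence class of $(B,A)$, and its tile has size $|B+1|=\beta$. I therefore define $\phi\colon\mathcal{T}(\alpha,[n])\to\mathcal{T}(\beta,[n])$ by $\phi(A,B)=(B+1,\,A-1)$.

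Next I would verify that the map given by the identical formula in the reverse direction, $\psi\colon\mathcal{T}(\beta,[n])\to\mathcal{T}(\alpha,[n])$ with $\psi(A',B')=(B'+1,\,A'-1)$, is a two-sided inverse: $\psi(\phi(A,B))=\psi(B+1,A-1)=\big((A-1)+1,\,(B+1)-1\big)=(A,B)$, and symmetrically $\phi(\psi(A',B'))=(A',B')$. Hence $\phi$ is a bijection, giving $|\mathcal{T}(\alpha,[n])|=|\mathcal{T}(\beta,[n])|$, which is the content of the corollary.

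I do not expect any genuine obstacle here: the statement is essentially the remark that a tiling and its ``transpose'' $(B,A)$ carry the same information, and the entire argument is the bookkeeping needed to pass between canonical representatives via the shift by $1$. The one point that deserves care in the write-up is the justification of $\min A=1$, which is exactly where canonicity of $(A,B)$ together with the elementary identity $\min(A+B)=\min A+\min B$ is used; everything else is immediate from commutativity of the Minkowski sum.
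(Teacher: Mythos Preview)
Your argument is correct and essentially identical to the paper's: both swap the tile and translation set and then shift by $1$ to restore the canonical representative, with your version additionally spelling out why $\min A=1$ and explicitly checking that the map is a bijection.
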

\begin{proof}
Let $P_{j}=(A,B)$ be a tiling of $[n]$ such that $|A|=\alpha$ and $|B|=\beta$. Define $P'_{j}$ to be $(A',B')$, where $A'=B+\{1\}$ and $B'=A-\{\text{min}[A]\}$. Notice that $P'_{j}$ is a tiling of $[n]$ such that $|A'|=\beta$ and $|B'|=\alpha$.
\end{proof}
As our results allow for fixed tile sizes, it is natural to ask, for a fixed $[n]$, are there more tiles of size $\alpha$ or of size $\alpha'$ that tile $C$? While we are unable to prove a total order on $|\mathcal{T}(\alpha,[n])|$ with respect to $\alpha$ and fully resolve this question, we are able to prove a partial order by leveraging the intuition that, the closer $\alpha$ is in its prime factorization to being a square root of $n$, the more distinct tiles there will be of size $\alpha$ that tile $[n]$. The difficulty in proving this statement is in formalizing this notion of ``closeness" to being a square root of $n$. Our proof simplifies this question by proving an order with respect to $\alpha$ and $\alpha'$ in cases where one has strictly higher multiplicity in each of its prime factors (and such that neither has too high of a multiplicity in any prime factor).

With this restriction as to the $\alpha$ we compare, we can utilize the recursive case of $\Psi_{(\alpha,[n],(k_{s},k_{r}))}$ to give an inductive proof and derive our desired partial order. In Lemma \ref{PartialOrder}, we first prove a base case with respect to $\alpha$ by comparing tiles of size $1$ to tiles of size $p_{i}$. We then make our inductive hypothesis with respect to both $\alpha$ and $n$, as to apply our inductive step, we will be relying on the recursion from Definition \ref{RecursiveDef}, which reduces the size of both $\alpha$ and $n$. Lastly, we pick apart the elements from the sets $\mathcal{S}$ and $\mathcal{R}_{k_{s}}$ for the cases in question, separating these pairs into groups of cases that can be more easily compared via an equality or inequality. Once all cases have been accounted for in this manner and with all inequalities being in the same direction, the proof will be complete.
\begin{lemma}\label{PartialOrder}
Let $\alpha=p_{1}^{\mu_{1}}\cdot p_{2}^{\mu_{2}}\cdot\ldots\cdot p_{k}^{\mu_{k}}$, let $n=p_{1}^{\psi_{1}}\cdot p_{2}^{\psi_{2}}\cdot\ldots\cdot p_{k}^{\psi_{k}}$, and assume $\forall i(2\mu_{i}\leq\psi_{i})$. It follows that, for all $j$ such that $1\leq\mu_{j}$, we have $|\mathcal{T}(\alpha/p_{j},[n])|<|\mathcal{T}(\alpha,[n])|$.
\end{lemma}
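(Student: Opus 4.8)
The plan is to proceed by strong induction, taking advantage of the fact that the recursion defining $\Psi_{(\alpha,[n],(k_{s},k_{r}))}$ in Definition~\ref{RecursiveDef} strictly decreases the interval length whenever it is actually invoked ($k_{s}+k_{r}\ge 2$ when $k_{r}\ne 0$, and the $k_{r}=0$ case is settled outright). Thus I would induct on $n$, with a secondary induction on $\alpha$, so that every recursive subcall has both parameters no larger and at least one strictly smaller. For the base case take $\alpha=p_{i}$ prime; then $\mu_{j}\ge 1$ forces $j=i$ and $\alpha/p_{j}=1$, so $|\mathcal{T}(1,[n])|=1$ and it suffices to exhibit a second size-$p_{i}$ tiling of $[n]$. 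Since $2\mu_{i}\le\psi_{i}$ gives $\psi_{i}\ge 2$, the pair with tile $A=\{1,\,1+p_{i},\,1+2p_{i},\,\ldots,\,1+(p_{i}-1)p_{i}\}$ and translations $B=\{0,1,\ldots,p_{i}-1\}+\{kp_{i}^{2}:0\le k<n/p_{i}^{2}\}$ is a valid, canonical tiling of $[n]$ whose tile is not congruent to $[p_{i}]$ (as $p_{i}\ge 2$), giving $|\mathcal{T}(p_{i},[n])|\ge 2>1$.

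For the inductive step, with $\alpha$ composite (so $\alpha\mid n$ and $\alpha/p_{j}\mid n$), expand both $|\mathcal{T}(\alpha,[n])|$ and $|\mathcal{T}(\alpha/p_{j},[n])|$ via Lemma~\ref{TightBound}, grouping each by first-segment size so that $|\mathcal{T}(\cdot,[n])|=\sum_{k_{s}}|\mathcal{T}(\cdot,[n],(k_{s},\cdot))|$, and compare the two expansions class by class in $k_{s}$. The divisors of $\alpha/p_{j}$ are exactly the divisors of $\alpha$ whose $p_{j}$-exponent is at most $\mu_{j}-1$; the others (those with $p_{j}$-exponent $\mu_{j}$, the smallest being $p_{j}^{\mu_{j}}$ itself) occur only on the $\alpha$-side and contribute nonnegatively — in fact $p_{j}^{\mu_{j}}$ contributes at least one, which is what will make the final inequality strict. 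For a divisor $k_{s}$ common to both sides and a rift size $k_{r}$ admissible for both, the recursive term compares $|\mathcal{T}(\tfrac{\alpha}{k_{s}},[\tfrac{n}{k_{s}+k_{r}}])|-|\mathcal{T}(\tfrac{\alpha}{k_{s}},[\tfrac{n}{k_{s}+k_{r}}],(1,\cdot))|$ against the same expression with $\tfrac{\alpha}{k_{s}}$ replaced by $\tfrac{\alpha/p_{j}}{k_{s}}=\tfrac{\alpha/k_{s}}{p_{j}}$ over the identical interval, so that whenever the induction hypothesis applies to that smaller instance it yields $\Psi_{(\alpha/p_{j},[n],(k_{s},k_{r}))}\le\Psi_{(\alpha,[n],(k_{s},k_{r}))}$. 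Because each $\Psi$-term is itself the number of tilings of a smaller interval whose first segment has size $\ge 2$, I expect one must actually carry through the induction not just the displayed inequality but a small family of companion inequalities controlling $|\mathcal{T}(\cdot,(1,\cdot))|$ (hence the correction terms). The remaining task is to balance the two mismatches between the expansions — the admissible rift sizes for $\alpha/p_{j}$ form a weakly larger set, since the divisibility condition $k_{s}+k_{r}\mid k_{s}\beta$ loosens when $\beta$ becomes $p_{j}\beta$, while $\alpha$ carries the extra divisors $p_{j}^{\mu_{j}}d$ — by pairing the surplus $(k_{s},k_{r})$-pairs on the $\alpha/p_{j}$-side against these surplus divisors on the $\alpha$-side so that each pairing nets in favour of $\alpha$, invoking Corollary~\ref{abSwap} to swap the roles of tile and translation set wherever that makes a comparison transparent.

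I expect the main obstacle to be that the hypothesis $2\mu_{i}\le\psi_{i}$ is \emph{not} preserved under the recursion: passing from $(\alpha,[n])$ to $\bigl(\tfrac{\alpha}{k_{s}},[\tfrac{n}{k_{s}+k_{r}}]\bigr)$ can remove more prime factors from $n$ than from $\alpha$ — for instance $(k_{s},k_{r})=(2,6)$ sends $(4,[16])$ to $(2,[2])$, where the analogous inequality is outright false, and $(k_{s},k_{r})=(3,9)$ sends $(12,[144])$ to $(4,[12])$, again outside the hypothesis. Consequently the induction hypothesis cannot be applied to every recursive subcall, and the real content of the proof — the ``separating the $(k_{s},k_{r})$-pairs into groups'' step of the outline — is to partition these pairs so that the hypothesis is invoked only where it still holds, while the remaining subcalls are shown either to cancel against their exact counterpart in the $\alpha$-versus-$\alpha/p_{j}$ tally or to be dominated by the surplus-divisor contributions, all with the $(1,\cdot)$ corrections propagated consistently. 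Getting this accounting exactly right — so that every inequality points the same way and strictness is located — is where essentially all the difficulty lies.
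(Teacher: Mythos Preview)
Your outline follows essentially the same inductive strategy as the paper: induct on $\Omega^{*}(\alpha)$ and $n$, expand both $|\mathcal{T}(\alpha,[n])|$ and $|\mathcal{T}(\alpha/p_{j},[n])|$ via Lemma~\ref{TightBound}, match up the $(k_{s},k_{r})$-classes common to both sides, and argue that the surplus $k_{s}$-values on the $\alpha$-side outweigh the surplus $k_{r}$-values on the $\alpha/p_{j}$-side. The paper carries out exactly this pairing, and in particular asserts, for each common $k_{s}\in\mathcal{S}\setminus\{\alpha,\alpha/p_{1}\}$ and common $k_{r}$, the inequality
\[
\bigl|\mathcal{T}(\alpha/k_{s},[n/(k_{s}+k_{r})])\bigr|>\bigl|\mathcal{T}(\alpha/(p_{1}k_{s}),[n/(k_{s}+k_{r})])\bigr|
\]
``by the inductive hypothesis,'' and separately that the $(1,\cdot)$-correction terms differ by exactly one.

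You have put your finger on the genuine gap: the hypothesis $2\mu_{i}\le\psi_{i}$ is \emph{not} inherited by the recursive subcall, so the inductive hypothesis cannot be invoked as the paper does. Your example $(\alpha,n)=(12,144)$ with $p_{1}=2$ and $(k_{s},k_{r})=(3,9)$ is decisive: it lies in the range $k_{s}\in\mathcal{S}\setminus\{\alpha,\alpha/p_{1}\}$ with $k_{s}+k_{r}\mid k_{s}\beta$, but the subcall is $(\alpha/k_{s},[n/(k_{s}+k_{r})])=(4,[12])$, where $2\cdot 2\not\le 2$, and the inequality the paper needs there, $|\mathcal{T}(4,[12])|>|\mathcal{T}(2,[12])|$, is false (one computes $3$ versus $4$). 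The paper's companion claim that the $(1,\cdot)$-corrections differ by one is likewise false in this instance: $|\mathcal{T}(4,[12],(1,\cdot))|=1$ while $|\mathcal{T}(2,[12],(1,\cdot))|=3$. Curiously, the two errors happen to compensate for this particular $(k_{s},k_{r})$ --- the net $\Psi$-comparison still comes out $2\ge 1$ --- but neither your outline nor the paper's argument explains why such compensation should hold in general. So your diagnosis is correct, and your proposal is at least as careful as the paper's own proof; but the hard accounting you describe in your final paragraph remains to be done, and the paper does not supply it either.
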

\begin{proof}
We proceed by induction on
\[\Omega^{*}(\alpha)=\sum_{i=1}^{k}\mu_{i}\]
and $n$, where $\Omega^{*}(\alpha)$ is the prime omega function. For $\Omega^{*}(\alpha)=1$ and all $n$, we have that $|\mathcal{T}(\alpha/p_{j},[n])|=|\mathcal{T}(1,[n])|=1$. By assumption we have $p_{j}^{2}|n$ and thus, in conjunction with Lemma \ref{TightBound}, we have that $|\mathcal{T}(p_{j},[n])|\geq|\mathcal{T}(1,[n/p_{j}])|+|\mathcal{T}(p_{j},[n/p_{j}])\geq 2$. Thus, the base case with respect to $\Omega^{*}(\alpha)$ holds for all $n$. Assuming that $|\mathcal{T}(\alpha/p_{j},[n])|<|\mathcal{T}(\alpha,[n])|$ holds for all $\alpha$ such that $\Omega^{*}(\alpha)\leq m$ and for contiguous $C$ such that $|C|<n$, we prove that this implies it holds for $\Omega^{*}(\alpha)=m+1$ and $n$. Without loss of generality, let this case be such that
\[\alpha=p_{1}^{\mu_{1}+1}\cdot p_{2}^{\mu_{2}}\cdot\ldots\cdot p_{k}^{\mu_{k}}\]
and assume that $2\mu_{1}\leq\psi_{1}$. By the assumption that $\forall i(2\mu_{i}\leq\psi_{i})$, it follows that $\mathcal{T}(\alpha,[n],(k_{s},k_{r}))$ has strictly more potential values for $k_{s}$ (i.e. has larger $|\mathcal{S}|$) when compared to $\mathcal{T}(\alpha/p_{1},[n],(k_{s},k_{r}))$, but that each value of $k_{s}\in\mathcal{S}\setminus\{\alpha,\alpha/p_{1}\}$ has exactly one more option for $k_{r}$ (i.e. $|\mathcal{R}_{k_{s}}|$ is one larger in the case of $\alpha/p_{1}$). This latter fact follows from the requirement that $k_{s}+k_{r}|k_{s}\beta$ and that the case with $\alpha/p_{1}$ results in an associated $\beta'$ such that $\beta'=p_{1}\beta$, as we have $(\alpha/p_{1})(\beta')=n$ by the definition of a valid tiling. Thus, $k_{s}+k_{r}=p_{1}k_{s}\beta$ is possible in the case of $\mathcal{T}(\alpha/p_{i},[n],(k_{s},k_{r}))$, but not $\mathcal{T}(\alpha,[n],(k_{s},k_{r}))$. The only exception to this is when we have $\mathcal{T}(\alpha,[n],(\alpha,k_{r}))$ and $\mathcal{T}(\alpha/p_{1},[n],(\alpha/p_{1},k_{r}))$ as in both such cases the only possible $k_{r}$ is $0$.

Based on the above, we begin by separating out the easiest cases to compare for both $\mathcal{T}(\alpha,[n])$ and $\mathcal{T}(\alpha/p_{1},[n])$, then proceed with handling the outliers. First, we have that
\[|\mathcal{T}(\alpha,[n],(\alpha,0))|=|\mathcal{T}(\alpha/p_{1},[n],(\alpha/p_{1},0))|=1.\]
Next, for $k_{s}\in\mathcal{S}\setminus\{\alpha,\alpha/p_{1}\}$ and $k_{r}$ such that $k_{s}+k_{r}|k_{s}\beta$, we have that 
\[|\mathcal{T}(\alpha,[n],(k_{s},k_{r}))|=\bigg|\mathcal{T}\bigg(\alpha/k_{s},\bigg[\frac{n}{k_{s}+k_{r}}\bigg]\bigg)\bigg|-\bigg|\mathcal{T}\bigg(\alpha/k_{s},\bigg[\frac{n}{k_{s}+k_{r}}\bigg],(1,\cdot)\bigg)\bigg|\]
as well as
\[|\mathcal{T}(\alpha/p_{1},[n],(k_{s},k_{r}))|=\bigg|\mathcal{T}\bigg(\alpha/p_{1}k_{s},\bigg[\frac{n}{k_{s}+k_{r}}\bigg]\bigg)\bigg|-\bigg|\mathcal{T}\bigg(\alpha/p_{1}k_{s},\bigg[\frac{n}{k_{s}+k_{r}}\bigg],(1,\cdot)\bigg)\bigg|.\]
By our inductive hypothesis, we have that
\[\bigg|\mathcal{T}\bigg(\alpha/k_{s},\bigg[\frac{n}{k_{s}+k_{r}}\bigg]\bigg)\bigg|>\bigg|\mathcal{T}\bigg(\alpha/p_{1}k_{s},\bigg[\frac{n}{k_{s}+k_{r}}\bigg]\bigg)\bigg|.\]
By Lemma \ref{TightBound} and our prior observation about the relative number of $k_{r}$ in such cases, we have that
\[\bigg|\mathcal{T}\bigg(\alpha/k_{s},\bigg[\frac{n}{k_{s}+k_{r}}\bigg],(1,\cdot)\bigg)\bigg|=\bigg|\mathcal{T}\bigg(\alpha/p_{1}k_{s},\bigg[\frac{n}{k_{s}+k_{r}}\bigg],(1,\cdot)\bigg)\bigg|+1.\]
Thus, it follows that $|\mathcal{T}(\alpha,[n],(k_{s},k_{r}))|\geq |\mathcal{T}(\alpha/p_{1},[n],(k_{s},k_{r}))|$ for all such $k_{s}$ and $k_{r}$.

We now classify all remaining tilings for tiles of size $\alpha/p_{1}$. In this case we have $\mathcal{T}(\alpha/p_{1},[n],(k_{s},k_{r}))$ for $k_{s}\in\mathcal{S}\setminus\{\alpha/p_{1}\}$ and $k_{r}$ such that $k_{s}+k_{r}=p_{1}k_{s}\beta$. As $n=\alpha\beta$ by definition, we also have that
\[\frac{n}{k_{s}+k_{r}}=\frac{\alpha\beta}{p_{1}k_{s}\beta}=\frac{\alpha}{p_{1}k_{s}}\]
from which it immediately follows that
\[|\mathcal{T}(\alpha/p_{1},[n],(k_{s},(p_{1}\beta-1)k_{s}))|=|\mathcal{T}(\alpha/p_{1}k_{s},[\alpha/p_{1}k_{s}])|-|\mathcal{T}(\alpha/p_{1}k_{s},[\alpha/p_{1}k_{s}],(1,(\cdot)|=1.\]
Further, this implies that $\sum_{k_{s}}|\mathcal{T}(\alpha/p_{1},[n],(k_{s},(p_{1}\beta-1)k_{s}))|=|\mathcal{S}\setminus\{\alpha,\alpha/p_{1}\}|$ for $k_{s}\in\mathcal{S}\setminus\{\alpha,\alpha/p_{1}\}$.

Lastly, we show that there are a strictly greater then $|\mathcal{S}\setminus\{\alpha,\alpha/p_{1}\}|$ tilings in $\mathcal{T}(\alpha,[n])$ we have not yet counted, thus proving the lemma. Consider the tilings in $\mathcal{T}(\alpha,[n],(\alpha/p_{1},k_{r}))$. Notice that, by our assumption that $\forall i(2\mu_{i}\leq\psi_{i})$ and with $k_{s}=\alpha/p_{i}$, it follows that
\[|\mathcal{R}_{\alpha/p_{i}}|>|\mathcal{S}\setminus\{\alpha,\alpha/p_{1}\}|.\]
As each $k_{r}\in\mathcal{R}_{\alpha/p_{i}}$ has at least one valid tiling associated with it, this proves the lemma.
\end{proof}
We now move on to proving our upper bound on $|\mathcal{T}(\alpha,[n])|$. Lemma \ref{DivisorsToTilings} leverages the fact that the formula from Lemma \ref{TightBound} has a recursive structure, with a branching factor based upon $|\mathcal{S}|$ and the size of $|\mathcal{R}_{k_{s}}|$ for each $k_{s}$ and at most logarithmic depth. We then use the fact that the definitions of $\mathcal{S}$ and $\mathcal{R}_{k_{s}}$ are based around divisibility to estimate the maximum number of branches based on the number of divisors of $\alpha$ and $\beta$ (and thus, the maximum number of valid tilings). We conclude the upper bound proof with Corollary \ref{UpperBoundCalculation}, which combines the result of Lemma \ref{DivisorsToTilings}, an estimate on the number of divisors of an integer, and an ideal setting of $\alpha$ to yield our upper bound. A similar proof to the one just outlined likely works to prove an upper bound on the number of tilings of an interval of the discrete line via the structural results Bodini and Rivals \cite{BR06}. However, it would introduce an additional $\sigma_{0}(n)$ multiplicative factor to Lemma \ref{DivisorsToTilings}, resulting in a final upper bound that would be larger by approximately a $\log n$ multiplicative factor in the exponent.
\begin{lemma}\label{DivisorsToTilings}
$|\mathcal{T}(\alpha,[n])|\leq (\sigma_{0}(\alpha)\cdot \sigma_{0}(\beta)-\sigma_{0}(\alpha)-\sigma_{0}(\beta)+2)^{\log n}$.
\end{lemma}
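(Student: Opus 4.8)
The plan is to read the recursive formula of Lemma~\ref{TightBound} as a tree whose branching factor is controlled by $D:=\sigma_{0}(\alpha)\sigma_{0}(\beta)-\sigma_{0}(\alpha)-\sigma_{0}(\beta)+2=(\sigma_{0}(\alpha)-1)(\sigma_{0}(\beta)-1)+1$ and whose depth is at most $\log n$, then to induct on $n$. Throughout one may assume $\alpha\mid n$ (with $\beta=n/\alpha$); otherwise $\mathcal{T}(\alpha,[n])=\emptyset$ and there is nothing to prove.

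\textbf{Counting the pairs $(k_{s},k_{r})$.} Since $\mathcal{S}=\{k_{s}:k_{s}\mid\alpha\}$ we have $|\mathcal{S}|=\sigma_{0}(\alpha)$. Fix $k_{s}\in\mathcal{S}$ with $k_{s}\neq\alpha$; writing $k_{r}=k_{s}t$ with $t\geq 1$ (forced by $k_{s}\mid k_{r}$), we get $k_{s}+k_{r}=k_{s}(t+1)$, so the condition $k_{s}+k_{r}\mid k_{s}\beta$ is equivalent to $(t+1)\mid\beta$. Hence $k_{r}$ ranges over exactly the $\sigma_{0}(\beta)-1$ divisors of $\beta$ of size at least $2$, i.e. $|\mathcal{R}_{k_{s}}|=\sigma_{0}(\beta)-1$, while for $k_{s}=\alpha$ the biconditional in $\mathcal{R}_{k_{s}}$ forces the single choice $k_{r}=0$. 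Therefore the double sum in Lemma~\ref{TightBound} has exactly $(\sigma_{0}(\alpha)-1)(\sigma_{0}(\beta)-1)+1=D$ terms; exactly one, the $(k_{s},k_{r})=(\alpha,0)$ term, has $\Psi_{(\alpha,[n],(\alpha,0))}=1$, and each of the remaining $D-1$ terms satisfies $\Psi_{(\alpha,[n],(k_{s},k_{r}))}\leq\big|\mathcal{T}(\alpha/k_{s},[n/(k_{s}+k_{r})])\big|$ since the subtracted term $|\mathcal{T}(\alpha/k_{s},[n/(k_{s}+k_{r})],(1,\cdot))|$ is nonnegative.

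\textbf{Sub-instances do not enlarge $D$ and at least halve $n$.} In a recursive term $\alpha/k_{s}$ divides $\alpha$, and (with $k_{r}=k_{s}t$) the associated number of translations is $(n/(k_{s}+k_{r}))/(\alpha/k_{s})=\beta/(t+1)$, which divides $\beta$ because $(t+1)\mid\beta$; since $\sigma_{0}$ does not increase along divisors and $(a-1)(b-1)+1$ is nondecreasing in $a,b\geq 1$, the base quantity for the sub-instance is $\leq D$. Moreover $k_{s}+k_{r}\geq 2$, so $n/(k_{s}+k_{r})\leq n/2$. Inducting on $n$ with hypothesis $|\mathcal{T}(\alpha',[n'])|\leq D^{\log n'}$ for all $n'<n$ (after bounding the sub-instance's base quantity by the ambient $D$), we obtain
\[
|\mathcal{T}(\alpha,[n])|\;\leq\;1+(D-1)D^{\log(n/2)}\;=\;1+D^{\log n}-D^{\log n-1}\;\leq\;D^{\log n},
\]
the last step holding because $D\geq 1$ and $\log n-1\geq 0$ force $D^{\log n-1}\geq 1$; the base case $n=1$ gives $\alpha=\beta=1$ and reads $1\leq 1$.

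\textbf{Main obstacle.} The only genuinely delicate point is the count above: verifying that the number of admissible pairs $(k_{s},k_{r})$ collapses exactly to $D=(\sigma_{0}(\alpha)-1)(\sigma_{0}(\beta)-1)+1$ and that the sole non-recursive term contributes precisely $1$. Once that is pinned down, the rest is bookkeeping: the $\Psi$ recursion is an upper bound because its subtracted term is nonnegative, the base quantity never grows down the tree because each new tile and translation size divides the old one, and every recursive call at least halves $n$, so the tree has depth at most $\log n$.
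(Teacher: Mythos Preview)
Your proof is correct and follows essentially the same approach as the paper: count the admissible $(k_{s},k_{r})$ pairs to get exactly $D=(\sigma_{0}(\alpha)-1)(\sigma_{0}(\beta)-1)+1$, observe that each recursive call at least halves $n$, and conclude the exponential bound. Your write-up is in fact slightly tighter than the paper's in two respects: you explicitly verify that the base quantity $D'$ for each sub-instance is at most the ambient $D$ (since $\alpha/k_{s}\mid\alpha$ and $\beta/(t+1)\mid\beta$), and you close with the clean inequality $1+(D-1)D^{\log n-1}\leq D^{\log n}$ rather than the paper's informal ``branching factor $D$, depth $\log n$'' assertion.
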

\begin{proof}
By Corollary \ref{abSwap}, we can assume without loss of generality that $\alpha\geq \beta$. We prove that the number of tilings that do not violate the restrictions of Lemma \ref{LowerBound} on $\mathcal{S}$ and $\mathcal{R}_{k_{s}}$ is exactly $\sigma_{0}(\alpha)\cdot \sigma_{0}(\beta)-\sigma_{0}(\alpha)-\sigma_{0}(\beta)+2$. The number of valid choices of $k_{s}$ is exactly $\sigma_{0}(\alpha)$. As to $\mathcal{R}_{k_{s}}$, we first handle the case of $k_{s}=\alpha$. As the third restriction forces $k_{r}=0$, this results in a single valid $k_{r}$. Next, consider when $k_{s}=1$. In this case, $k_{s}|k_{r}$ for any choice of $k_{r}$, so the first restriction is satisfied. Lastly, the second restriction simplifies to $k_{r}+1|\beta$. The number of $k_{r}$ that satisfy this is exactly $\sigma_{0}(\beta)-1$, as the only divisor of $\beta$ we cannot form with the sum $k_{r}+1$ is $1$. 

We can now handle any remaining cases. Let $\textit{div}(\beta)$ be the set of divisors of $\beta$. Notice that $\textit{div}(k_{s}\beta)$ is exactly $\big(k_{s}\cdot\textit{div}(\beta)\big)\cup\textit{div}(k_{s})\cup\textit{div}(\beta)$. Due to the second restriction of $\mathcal{R}_{k_{s}}$ (i.e. $k_{s}+k_{r}|k_{s}\beta$) and that fact that $k_{s}+k_{r}>k_{s}$, we can narrow down options for $k_{r}$ from $\textit{div}(k_{s}\beta)$ to $(k_{s}\cdot\textit{div}(\beta))\cup\textit{div}(\beta)$. By the first restriction of $\mathcal{R}_{k_{s}}$, we have that $k_{s}|k_{r}$, so we can further simplify valid options for $k_{r}$ from $k_{s}\cdot\textit{div}(\beta)\cup\textit{div}(\beta)$ to $k_{s}\cdot\textit{div}(\beta)$. Lastly, $k_{r}\neq k_{s}$ and thus, the number of valid options for $k_{r}$ equals
\[|k_{s}\cdot\textit{div}(\beta)|-1=\sigma_{0}(\beta)-1.\]
Thus, for all $k_{s}$ such that $k_{s}|a$ and $k_{s}\neq a$, we have $\sigma_{0}(\beta)-1$ options for $\beta$. When $k_{s}=\alpha$, we have exactly $1$ option for $k_{s}$. Each of these $\sigma_{0}(\alpha)\cdot \sigma_{0}(\beta)-\sigma_{0}(\alpha)-\sigma_{0}(\beta)+2$ sub-cases then divides $n$ by at least $2$. Thus, if we very conservatively assume that each sub-case decreases $n$ by a factor of $2$, we get that 
\[|\mathcal{T}(\alpha,[n])|\leq (\sigma_{0}(\alpha)\cdot \sigma_{0}(\beta)-\sigma_{0}(\alpha)-\sigma_{0}(\beta)+2)^{\log n}\]
as desired.
\end{proof}
By Theorem 317 of Hardy and Wright \cite{HW79} (which is attributed to Wigert (1907)) we have that, for any $\epsilon>0$ and infinitely many sufficiently large $n$,\footnote{Only the lower bound is infinitely often. The upper bound holds for all sufficiently large $n$.}
\[2^{(1-\epsilon)\log (n)/\log\log(n)}<\sigma_{0}(n)<2^{(1+\epsilon)\log (n)/\log\log(n)}.\]
Thus, for infinitely many $n$, we have that $\sigma_{0}(n)\sim 2^{\log (n)/\log\log(n)}$.
\begin{cor}\label{UpperBoundCalculation}
For $C=[n]$, all $n$, and any $\epsilon>0$ we have that
\[\underset{\alpha}{\textup{\text{max}}}\big[|\mathcal{T}(\alpha,C)|\big]\leq (2^{\frac{(1+\epsilon)\log n}{\log\log\sqrt{n}}}-2^{\frac{\log \sqrt{n}}{\log\log\sqrt{n}}+1}+2)^{\log n}<n^{\frac{(1+\epsilon')\log n}{\log\log n}}.\]
\end{cor}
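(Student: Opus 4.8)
The plan is to combine Lemma~\ref{DivisorsToTilings} with the Wigert bound on $\sigma_0$, optimizing over the choice of $\alpha$. Since $\alpha\beta = n$ is fixed, the quantity $\sigma_0(\alpha)\cdot\sigma_0(\beta) - \sigma_0(\alpha) - \sigma_0(\beta) + 2 = (\sigma_0(\alpha)-1)(\sigma_0(\beta)-1)+1$ is maximized (heuristically) when $\sigma_0(\alpha)$ and $\sigma_0(\beta)$ are both as large as possible, which by AM–GM-type reasoning on the exponent means balancing $\alpha$ and $\beta$, i.e.\ taking $\alpha \approx \sqrt{n}$. So first I would observe that for any factorization $n = \alpha\beta$, we have $\sigma_0(\alpha) \le \sigma_0(n)$ but more usefully that $\sigma_0(\alpha)\sigma_0(\beta)$ is bounded in terms of the divisor structure; the cleanest route is to note $\sigma_0(\alpha), \sigma_0(\beta) \le 2^{(1+\epsilon)\log(\sqrt n)/\log\log(\sqrt n)}$ whenever $\alpha,\beta \le \sqrt n \cdot(\text{something})$ — more precisely, each of $\alpha,\beta \le n$ so $\sigma_0(\alpha),\sigma_0(\beta) < 2^{(1+\epsilon)\log n/\log\log n}$, but to get the $\log\log\sqrt n$ in the denominator we want to restrict attention to $\alpha \le \sqrt n$ (WLOG by Corollary~\ref{abSwap}), whence $\sigma_0(\alpha) < 2^{(1+\epsilon)\log\sqrt n/\log\log\sqrt n}$ and likewise $\beta = n/\alpha$ need not be small, so instead bound $\sigma_0(\alpha)\sigma_0(\beta)$ directly.

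The key step is therefore the estimate
\[
\sigma_0(\alpha)\cdot\sigma_0(\beta) \le 2^{\frac{(1+\epsilon)\log n}{\log\log\sqrt n}},
\]
which I would justify as follows: write $\alpha = \prod p_i^{a_i}$, $\beta = \prod p_i^{b_i}$ with $a_i + b_i = c_i$ the exponent of $p_i$ in $n$; then $\sigma_0(\alpha)\sigma_0(\beta) = \prod (a_i+1)(b_i+1) \le \prod \left(\frac{c_i}{2}+1\right)^2 = \left(\prod(\tfrac{c_i+2}{2})\right)^2$, and $\prod\frac{c_i+2}{2} \le \prod(c_i+1) = \sigma_0(n)$ — wait, that gives $\sigma_0(\alpha)\sigma_0(\beta)\le\sigma_0(n)^2$, too weak by itself. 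The better bound uses that $\prod\frac{c_i+2}{2} \le \prod\sqrt{c_i+1}\cdot(\text{const})$ so $\sigma_0(\alpha)\sigma_0(\beta)\lesssim \sigma_0(n)$; I'd make this precise by checking $(a_i+1)(b_i+1)\le c_i+1$ when... no, that fails ($a_i=b_i=1$ gives $4 > 3$). So the honest claim is $\sigma_0(\alpha)\sigma_0(\beta)\le \sigma_0(n^2)^{1/2}\cdot\sigma_0(n)^{1/2}$ or simply $\le \sigma_0(n)^2$, and then apply Wigert to $\sigma_0(n) < 2^{(1+\epsilon)\log n/\log\log n}$ to get the term raised to the $\log n$ is at most $2^{(2+\epsilon)(\log n)^2/\log\log n} < n^{(2+\epsilon')\log n/\log\log n}$. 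To land exactly the stated bound with $\log\log\sqrt n$, I would instead argue that WLOG $\beta \le \sqrt n$, apply Wigert to get $\sigma_0(\beta) < 2^{(1+\epsilon)\log\sqrt n/\log\log\sqrt n}$, and crucially bound $\sigma_0(\alpha)$: since $\alpha = n/\beta$ can be large, bound $\sigma_0(\alpha) \le \sigma_0(n)$ and use $\log\log n > \log\log\sqrt n$ only loosely — alternatively note $\sigma_0(\alpha)\sigma_0(\beta)\le \sigma_0(n)\cdot\sigma_0(\beta) \le \sigma_0(n)^{1+o(1)}$.

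The cleanest correct path, which I would actually write: use Corollary~\ref{abSwap} to take $\beta\le\sqrt n$; then $\sigma_0(\alpha)\sigma_0(\beta)$, with the subtracted terms, is at most $\sigma_0(\alpha)\sigma_0(\beta) \le \sigma_0(n)\sigma_0(n) = \sigma_0(n)^2$; plugging into Lemma~\ref{DivisorsToTilings} gives $|\mathcal{T}(\alpha,[n])|\le \sigma_0(n)^{2\log n}$, and by Wigert $\sigma_0(n)^{2\log n} < 2^{(2+2\epsilon)(\log n)^2/\log\log n} = n^{(2+2\epsilon)\log n/\log\log n}$. But the stated corollary has a single $(1+\epsilon)$ in front, so the intended argument must actually be balancing: choose $\alpha$ to be the maximizer, and for the maximizer one shows $\sigma_0(\alpha)\approx\sigma_0(\beta)\approx\sigma_0(\sqrt n)\approx 2^{\log\sqrt n/\log\log\sqrt n}$, making the product $\approx 2^{2\log\sqrt n/\log\log\sqrt n} = 2^{\log n/\log\log\sqrt n}$ — this is where the $\log\log\sqrt n$ denominator and single $(1+\epsilon)$ come from. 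The main obstacle, then, is establishing that the \emph{maximum over $\alpha$} of $(\sigma_0(\alpha)-1)(\sigma_0(\beta)-1)$ is at most $2^{(1+\epsilon)\log n/\log\log\sqrt n}$; I would handle this by the multiplicativity argument above, $\prod(a_i+1)(b_i+1)\le\prod\big(\lceil(c_i+1)/2\rceil\lceil(c_i+1)/2\rceil\big)$ roughly $\big(\prod\lceil(c_i+1)/2\rceil\big)^2$, recognizing $\prod\lceil(c_i+1)/2\rceil$ as essentially $\sigma_0$ of the "square-root-shaped" divisor of $n$, bounded by Wigert's upper bound applied at argument $\sqrt n$ up to the $\epsilon$ slack; then raising to the $\log n$ and absorbing lower-order terms into $\epsilon'$ gives the final inequality $< n^{(1+\epsilon')\log n/\log\log n}$, using $\log\log\sqrt n = \log\log n - 1 = (1-o(1))\log\log n$.
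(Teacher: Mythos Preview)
Your approach matches the paper's: apply Lemma~\ref{DivisorsToTilings}, argue that $\sigma_0(\alpha)\sigma_0(\beta)-\sigma_0(\alpha)-\sigma_0(\beta)+2$ is maximized near $\alpha=\beta=\sqrt{n}$, and then invoke Wigert's bound on $\sigma_0$. The paper's own proof is in fact terser than yours---it simply asserts the maximization at $\alpha=\beta=\sqrt n$ and plugs in---whereas you correctly supply the per-prime AM--GM justification $\prod_i (a_i+1)(b_i+1)\le\prod_i(c_i/2+1)^2$ and the absorption step $\log\log\sqrt n=(1-o(1))\log\log n$ needed to reach the final form.
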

\begin{proof}
Notice that the inner $\sigma_{0}(\alpha)\cdot \sigma_{0}(\beta)-\sigma_{0}(\alpha)-\sigma_{0}(\beta)+2$ is maximized when $\alpha=\beta=\sqrt{n}$. By Lemma \ref{DivisorsToTilings}, we have at most $(\sigma_{0}(\sqrt{n})^{2}-2\sigma_{0}(\sqrt{n})+2)^{\log n}$ valid tilings. Using the bounds of Wigert \cite{HW79} yields the statement.
\end{proof}
This is a significant improvement over the trivial upper bound of ${n\choose 2}$. Next, we prove our super-polynomial lower bound on $|\mathcal{T}(\alpha,[n])|$. The primary lemmas that achieve this are Lemma \ref{TightBound2PartialLowerBound} and Lemma \ref{LowerBoundCalculation}. Rather then proving these directly, we feel that it is better to first prove several strictly weaker lemmas that only achieve sub-linear lower bounds, but which highlight the key ideas we require. From there, we add several layers to the weaker arguments to take our lower bound from sub-linear all the way to super-polynomial.

We begin our series of lower bound proofs with Lemma \ref{2kcase}. All of the lemmas related to our lower bound rely on our formula for counting $|\mathcal{T}(\alpha,[n])|$ from Lemma \ref{TightBound2}. Thus, the primary purpose of Lemma \ref{2kcase} is to highlight this by applying Lemma \ref{TightBound2} to calculate $|\mathcal{T}(2^{\lfloor k/2\rfloor},[2^{k}])|$. The choice of $n$ and $\alpha$ greatly simplify this lemma, as the choice of $n=2^{k}$ means that their are no negative terms, and our choice of $\alpha=2^{\lfloor k/2\rfloor}$ leads to an opportunity to apply Corollary \ref{abSwap}.
\begin{lemma}\label{2kcase}
For $k\in\mathbb{Z}^{+}$, we have that $|\mathcal{T}(2^{\lfloor k/2\rfloor},[2^{k}])|>2^{0.58(k-2)}$.
\end{lemma}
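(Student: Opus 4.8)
The plan is to apply Lemma \ref{TightBound2} directly to the pair $(\alpha, n) = (2^{\lfloor k/2\rfloor}, 2^{k})$. Since the only prime divisor of $n = 2^{k}$ is $2$, the index set $\mathcal{P}_{(n,k)}$ is empty for $k \geq 2$ and equals $\{2\}$ for $k = 1$, so the inclusion-exclusion sum collapses to a single recursion: $|\mathcal{T}(\alpha, [2^{k}])| = |\mathcal{T}(\alpha, [2^{k-1}])| + |\mathcal{T}(\alpha/2, [2^{k-1}])|$ (with the convention that $\mathcal{T}$ of a non-integral first argument is empty). This is the clean ``no negative terms'' situation advertised in the paragraph preceding the lemma. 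I would iterate this recursion, or better, set up the two-variable table $a_{j,t} = |\mathcal{T}(2^{j}, [2^{t}])|$ and observe that $a_{j,t} = a_{j,t-1} + a_{j-1,t-1}$ with boundary values $a_{0,t} = 1$ (only the trivial tiling for a size-$1$ tile) and, by Corollary \ref{abSwap}, $a_{j,t} = a_{t-j,t}$.

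The heart of the argument is to get a good lower bound on $a_{\lfloor k/2\rfloor, k}$ from this recurrence. First I would peel off one more layer using Corollary \ref{abSwap}: for $k$ even, $a_{k/2,k}$ is the ``central'' entry, and for $k$ odd, $a_{(k-1)/2,k} = a_{(k+1)/2,k}$ are the two central entries, which (by one more step of the recurrence) sum to the next central-type quantity. Iterating $a_{j,t} = a_{j,t-1} + a_{j-1,t-1}$ and comparing with the Pascal recurrence, one sees $a_{j,t} \geq \binom{t}{j}$ would be the naive guess — but that's false for the boundary, so instead I expect the right comparison is that $a_{j,t}$ dominates a shifted/weighted binomial-type sum. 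The cleanest route: prove by induction that $a_{j,t} \geq 2^{c\,t}$ for the diagonal $j = \lfloor t/2\rfloor$ by showing $a_{\lfloor t/2 \rfloor, t} \geq a_{\lfloor (t-2)/2\rfloor, t-2} \cdot (\text{constant} > 1)$, i.e. relate the central entry two rows up to the central entry of the current row. Unfolding two steps of the recurrence, $a_{j,t} = a_{j,t-2} + 2a_{j-1,t-2} + a_{j-2,t-2}$, and for the central diagonal this expresses $a_{\lfloor t/2\rfloor,t}$ as a weighted sum of three nearby entries of row $t-2$, among which the central one $a_{\lfloor (t-2)/2\rfloor, t-2}$ appears with total weight at least $3$ once one uses the symmetry $a_{j,t-2} = a_{t-2-j,t-2}$ to fold the endpoints toward the middle. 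That would give a multiplicative factor of at least $3$ every two steps, i.e. $a_{\lfloor k/2\rfloor,k} \gtrsim 3^{k/2} = 2^{(k\log 3)/2} \geq 2^{0.79 k}$, comfortably beating the claimed $2^{0.58(k-2)}$; the slack in the exponent and the $(k-2)$ shift are there precisely to absorb parity cases and small-$k$ base cases.

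The main obstacle I anticipate is handling the boundary of the triangle honestly: the symmetry $a_{j,t} = a_{t-j,t}$ lets me fold, but near the edges ($j$ close to $0$ or $t$) the entries are small (e.g. $a_{0,t} = 1$, $a_{1,t} = \sigma_0(2^{t-1}) - $ correction $= t$ or so by the $k_s = 1$ analysis), so I must make sure the ``weight $\geq 3$ on the central entry'' claim doesn't secretly lean on an edge entry being large. For small $k$ (say $k \leq 4$) I would just verify the inequality $|\mathcal{T}(2^{\lfloor k/2\rfloor},[2^{k}])| > 2^{0.58(k-2)}$ by hand — e.g. $|\mathcal{T}(2,[4])| = 2 > 2^{-0.58}$, $|\mathcal{T}(2,[8])| = a_{1,3} + a_{0,3}$-type computation, etc. — and then run the two-step induction for $k$ large enough that the central-diagonal entries of rows $k-2$ and $k$ are both safely in the interior. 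A secondary subtlety is the $\lfloor k/2 \rfloor$ vs $\lceil k/2 \rceil$ bookkeeping when $k$ is odd; Corollary \ref{abSwap} makes these two entries equal, so I'd phrase the induction on the quantity $a_{\lfloor k/2\rfloor, k}$ directly and note the odd case only helps (two equal central entries feeding the recurrence). Once the factor-$3$-per-two-steps bound is established, the stated constant $0.58$ follows with room to spare, since $\tfrac12\log_2 3 \approx 0.792 > 0.58$.
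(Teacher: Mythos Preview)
Your setup is exactly the paper's: apply Lemma~\ref{TightBound2} with the single prime $2$ to get the recursion $a_{j,t}:=|\mathcal{T}(2^{j},[2^{t}])|=a_{j,t-1}+a_{j-1,t-1}$, invoke Corollary~\ref{abSwap} for the symmetry $a_{j,t}=a_{t-j,t}$, and induct along the central diagonal. The paper does a one-step parity split rather than your two-step unfolding, but the ingredients are identical.

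Your key step, however, has a gap. The claim that symmetry-folding puts total weight at least $3$ on the central entry of row $t-2$ is correct only when $t$ is odd: for $t=2m+1$ the identity $a_{m,2m-1}=a_{m-1,2m-1}$ indeed merges an outer coefficient into the center, giving weight $1+2=3$. But for even $t=2m$ the symmetry reads $a_{m,2m-2}=a_{m-2,2m-2}$, which identifies the two \emph{outer} terms with each other, not with the center $a_{m-1,2m-2}$. So folding yields only weight $2$ on the center, hence merely $V_{2m}\geq 2\,V_{2m-2}$, i.e.\ $V_k\gtrsim 2^{k/2}$ along even $k$ --- and this falls below $2^{0.58(k-2)}$ once $k\geq 16$. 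The repair is one extra inequality ($a_{m-2,2m-2}\geq\tfrac12 a_{m-1,2m-2}$ for $m\geq2$), but as written the even-$k$ branch does not close.

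More importantly, you talk yourself out of the shortest proof. Your remark that ``$a_{j,t}\geq\binom{t}{j}$ \ldots\ is false for the boundary'' is mistaken: the boundaries are $a_{0,t}=|\mathcal{T}(1,[2^t])|=1=\binom{t}{0}$ and $a_{t,t}=|\mathcal{T}(2^t,[2^t])|=1=\binom{t}{t}$, so the recurrence \emph{is} Pascal's triangle and in fact $a_{j,t}=\binom{t}{j}$ exactly. Then $|\mathcal{T}(2^{\lfloor k/2\rfloor},[2^{k}])|=\binom{k}{\lfloor k/2\rfloor}\geq 2^{k}/(k+1)$, which dispatches the lemma immediately and with room to spare.
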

\begin{proof}
In this case, Lemma \ref{TightBound2} simplifies to $|\mathcal{T}(2^{\lfloor k/2\rfloor},[2^{k}])|=|\mathcal{T}(2^{\lfloor k/2\rfloor},[2^{k-1}])|+|\mathcal{T}(2^{\lfloor k/2\rfloor-1},[2^{k-1}])|$. We proceed by induction on $k$. For $k=1$, we have that $\alpha=1$ which yields a single tiling which is sufficient. If $k+1$ is even, we have 
\begin{align}
\mathcal{T}(2^{(k+1)/2},[2^{k+1}])|=&|\mathcal{T}(2^{(k+1)/2},[2^{k}])|+|\mathcal{T}(2^{(k-1)/2},[2^{k}])|\\
=&|\mathcal{T}(2^{\lceil k/2\rceil},[2^{k}])|+|\mathcal{T}(2^{(\lfloor k/2\rfloor},[2^{k}])|\\
=&2|\mathcal{T}(2^{\lfloor k/2\rfloor},[2^{k}])|,
\end{align}
where the equality between lines $2$ and $3$ is due to Corollary \ref{abSwap}. We can then apply the inductive hypothesis to yield the desired result. If $k+1$ is odd, we have that 
\begin{align*}
\mathcal{T}(2^{\lfloor (k+1)/2\rfloor},[2^{k+1}])|=&|\mathcal{T}(2^{\lfloor (k+1)/2\rfloor},[2^{k}])|+|\mathcal{T}(2^{\lfloor (k+1)/2\rfloor-1},[2^{k}])|\\
=&|\mathcal{T}(2^{k/2},[2^{k}])|+|\mathcal{T}(2^{(k/2)-1},[2^{k}])|\\>&|\mathcal{T}(2^{k/2},[2^{k}])|.\\
\end{align*}
As the number of solutions doubles every other increase of $k$ and never decreases when increasing $k$, the growth rate should be roughly $1.5^{k}$. However, due to the crudeness of our bound in the case of $k+1$ being odd, this growth is ``staggered." Thus, to make sure our lower bound does not fail for some small values of $k$ by exceeding $|\mathcal{T}(\alpha,[2^{k}])|$, we subtract $2$ from the exponent. Thus, in the case of $k=2$, we have $|\mathcal{T}(2,[4])|=2$ and $1.5^{k-2}=1$. This gives the $|\mathcal{T}(2^{\lfloor k/2\rfloor},[2^{k}])|$ a sufficient head start in size so as to not be effected by the lack of growth for odd $k+1$. Thus, it follows that $|\mathcal{T}(2^{\lfloor k/2\rfloor},[2^{k}])|>1.5^{k-2}>2^{0.58(k-2)}$. 
\end{proof}
The next two preliminary lower bound lemmas are Lemma \ref{IE2k9} and Lemma \ref{2k9case}. Lemma \ref{2k9case} extends Lemma \ref{2kcase} by multiplying $n$ by $3^{2}$. While this appears to be a minor change, it serves to illustrate the idea that increasing the number of prime factors of $n$ can have the effect of increasing the number of valid tilings by some constant factor in the exponent. However, now that we have more then one prime factor of $n$ and unlike in Lemma \ref{2k9case}, applying Lemma \ref{TightBound2} results in some positive and some negative terms. Thus, we require an inequality that allows us to remove all of the negative terms at the cost of positive terms of relativity little importance. We provide such an inequality and prove its correctness in Lemma \ref{IE2k9}, then prove Lemma \ref{2k9case} utilizing it.
\begin{lemma}\label{IE2k9}
For $k\geq 1$ and $\alpha$ such that $3|\alpha$ and $3^{2}\nmid \alpha$,
\[|\mathcal{T}(\alpha,[2^{k}3^{2}])|\geq |\mathcal{T}(\alpha,[2^{k-1}3^{2}])|+|\mathcal{T}(\alpha/2,[2^{k-1}3^{2}])|+|\mathcal{T}(\alpha/3,[2^{k}])|.\]
\end{lemma}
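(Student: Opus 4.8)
The plan is to reduce the inequality to two applications of the counting formula of Lemma~\ref{TightBound2} together with two elementary monotonicity facts that are already implicit in its proof. Recall that the proof of Lemma~\ref{TightBound2} constructs, for every prime $v$ dividing $m$, injections $f_{1}\colon \mathcal{T}(\alpha,[m/v])\hookrightarrow\mathcal{T}(\alpha,[m])$ and $f_{2}\colon \mathcal{T}(\alpha/v,[m/v])\hookrightarrow\mathcal{T}(\alpha,[m])$. These yield, for every prime $v\mid m$,
\[
|\mathcal{T}(\alpha,[m/v])|\le|\mathcal{T}(\alpha,[m])|
\qquad\text{and}\qquad
|\mathcal{T}(\alpha/v,[m/v])|\le|\mathcal{T}(\alpha,[m])|,
\]
where, as elsewhere, $|\mathcal{T}(\gamma,[m'])|$ is read as $0$ when $\gamma$ is not a positive integer or $\gamma\nmid m'$. (If one prefers not to appeal to the internals of that proof, the first inequality also follows by appending $v$ translated copies of a tiling of $[m/v]$, and the second by replacing each point of a tiling of $[m/v]$ with $v$ consecutive points while dilating $B$ by $v$; both maps are injective in the same way as the maps of Lemma~\ref{LowerBound}.)

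First I would expand the left-hand side. The prime divisors of $2^{k}3^{2}$ are precisely $2$ and $3$, so Lemma~\ref{TightBound2} gives
\begin{align*}
|\mathcal{T}(\alpha,[2^{k}3^{2}])| ={}& |\mathcal{T}(\alpha,[2^{k-1}3^{2}])|+|\mathcal{T}(\alpha/2,[2^{k-1}3^{2}])| \\
& +|\mathcal{T}(\alpha,[2^{k}3])|+|\mathcal{T}(\alpha/3,[2^{k}3])| -|\mathcal{T}(\alpha,[2^{k-1}3])|-|\mathcal{T}(\alpha/6,[2^{k-1}3])|.
\end{align*}
Applying the second monotonicity fact with $m=2^{k}3$, $v=2$, and tile size $\alpha/3$ shows $|\mathcal{T}(\alpha/3,[2^{k}3])|\ge|\mathcal{T}(\alpha/6,[2^{k-1}3])|$, so those two terms cancel and it suffices to prove
\[
|\mathcal{T}(\alpha,[2^{k}3])|-|\mathcal{T}(\alpha,[2^{k-1}3])|\ \ge\ |\mathcal{T}(\alpha/3,[2^{k}])|.
\]

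To finish, I would expand $|\mathcal{T}(\alpha,[2^{k}3])|$ by Lemma~\ref{TightBound2} once more (the relevant primes are again $2$ and $3$):
\begin{align*}
|\mathcal{T}(\alpha,[2^{k}3])| ={}& |\mathcal{T}(\alpha,[2^{k-1}3])|+|\mathcal{T}(\alpha/2,[2^{k-1}3])|+|\mathcal{T}(\alpha,[2^{k}])| \\
& +|\mathcal{T}(\alpha/3,[2^{k}])|-|\mathcal{T}(\alpha,[2^{k-1}])|-|\mathcal{T}(\alpha/6,[2^{k-1}])|.
\end{align*}
The first monotonicity fact with $m=2^{k}$, $v=2$ gives $|\mathcal{T}(\alpha,[2^{k}])|\ge|\mathcal{T}(\alpha,[2^{k-1}])|$, and the second with $m=2^{k-1}3$, $v=3$, tile size $\alpha/2$ gives $|\mathcal{T}(\alpha/2,[2^{k-1}3])|\ge|\mathcal{T}(\alpha/6,[2^{k-1}])|$; hence the two negative terms are absorbed and $|\mathcal{T}(\alpha,[2^{k}3])|-|\mathcal{T}(\alpha,[2^{k-1}3])|\ge|\mathcal{T}(\alpha/3,[2^{k}])|$, as needed. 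Feeding this back into the displayed inequality above and cancelling the two occurrences of $|\mathcal{T}(\alpha,[2^{k-1}3])|$ yields exactly the assertion of Lemma~\ref{IE2k9}.

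I do not anticipate a genuine obstacle; the argument is bookkeeping on top of Lemma~\ref{TightBound2} and the two embeddings. The only care needed is (i) verifying that the prime supports of $2^{k}3^{2}$, $2^{k}3$, and $2^{k}$ are as claimed so that the inclusion--exclusion sums have precisely the terms written above, including the degenerate case $k=1$ where $2^{k-1}=1$; and (ii) the running convention that tiling counts with non-integral or non-dividing tile size vanish, which is what makes the two monotonicity facts — and hence the whole derivation — valid uniformly regardless of the parity of $\alpha$ or whether $6\mid\alpha$. Note that the hypothesis $3^{2}\nmid\alpha$ plays no role in this particular inequality; it is carried along only because it is needed where Lemma~\ref{IE2k9} is used in the proof of Lemma~\ref{2k9case}.
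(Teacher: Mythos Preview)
Your proof is correct and follows essentially the same route as the paper: two successive applications of Lemma~\ref{TightBound2} followed by pairing each negative term with a larger positive term via the embeddings $f_{1},f_{2}$. The only cosmetic difference is that the paper cancels $|\mathcal{T}(\alpha,[2^{k}])|-|\mathcal{T}(\alpha,[2^{k-1}])|$ by observing both vanish (since $3\mid\alpha$), whereas you use the general monotonicity $|\mathcal{T}(\alpha,[2^{k}])|\ge|\mathcal{T}(\alpha,[2^{k-1}])|$; this makes your argument work without invoking the hypothesis $3\mid\alpha$, confirming your closing remark that the divisibility assumptions on $\alpha$ are not actually needed for the inequality itself.
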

\begin{proof}
Our approach to this proof will be to expand $|\mathcal{T}(\alpha,[2^{k}3^{2}])|$ using Lemma \ref{TightBound2}, isolate the terms in the sum $|\mathcal{T}(\alpha,[2^{k-1}3^{2}])|+|\mathcal{T}(\alpha/2,[2^{k-1}3^{2}])|+|\mathcal{T}(\alpha,[2^{k}])|$ and prove that the sum of the remaining terms from the inclusion-exclusion formula is non-negative. By Lemma \ref{TightBound2}, we have that
\begin{align*}
|\mathcal{T}(\alpha,[2^{k}3^{2}])|=&|\mathcal{T}(\alpha,[2^{k-1}3^{2}])|+|\mathcal{T}(\alpha/2,[2^{k-1}3^{2}])|+|\mathcal{T}(\alpha,[2^{k}3])|+\\
&|\mathcal{T}(\alpha/3,[2^{k}3])|-|\mathcal{T}(\alpha,[2^{k-1}3])|-|\mathcal{T}(\alpha/6,[2^{k}3])|.
\end{align*}
We can now apply Lemma \ref{TightBound2} again, this time to the term $|\mathcal{T}(\alpha,[2^{k}3])|$ to get
\begin{align}
|\mathcal{T}(\alpha,[2^{k}3^{2}])|=&|\mathcal{T}(\alpha,[2^{k-1}3^{2}])|+|\mathcal{T}(\alpha/2,[2^{k-1}3^{2}])|+|\mathcal{T}(\alpha,[2^{k-1}3])|+|\mathcal{T}(\alpha/2,[2^{k-1}3])|+\notag\\
&|\mathcal{T}(\alpha,[2^{k}])|+|\mathcal{T}(\alpha/3,[2^{k}])|-|\mathcal{T}(\alpha,[2^{k-1}])|-|\mathcal{T}(\alpha/6,[2^{k-1}])|\\
&+|\mathcal{T}(\alpha/3,[2^{k}3])|-|\mathcal{T}(\alpha,[2^{k-1}3])|-|\mathcal{T}(\alpha/6,[2^{k}3])|\notag\\
=&|\mathcal{T}(\alpha,[2^{k-1}3^{2}])|+|\mathcal{T}(\alpha/2,[2^{k-1}3^{2}])|+|\mathcal{T}(\alpha,[2^{k-1}3])|+|\mathcal{T}(\alpha/2,[2^{k-1}3])|+\notag\\
&|\mathcal{T}(\alpha/3,[2^{k}])|-|\mathcal{T}(\alpha/6,[2^{k-1}])|+|\mathcal{T}(\alpha/3,[2^{k}3])|-\\
&|\mathcal{T}(\alpha,[2^{k-1}3])|-|\mathcal{T}(\alpha/6,[2^{k}3])|\notag\\
=&|\mathcal{T}(\alpha,[2^{k-1}3^{2}])|+|\mathcal{T}(\alpha/2,[2^{k-1}3^{2}])|+|\mathcal{T}(\alpha/2,[2^{k-1}3])|+|\mathcal{T}(\alpha/3,[2^{k}])|-\\
&|\mathcal{T}(\alpha/6,[2^{k-1}])|+|\mathcal{T}(\alpha/3,[2^{k}3])|-|\mathcal{T}(\alpha/6,[2^{k}3])|\notag
\end{align}
where equation $5$ follows from equation $4$ due to $3|a$ and $3\nmid |C|$. Thus, as $a\nmid |C|$, no valid tilings were dropped between these two lines. We now set aside the terms in $|\mathcal{T}(\alpha,[2^{k-1}3^{2}])|+|\mathcal{T}(\alpha/2,[2^{k-1}3^{2}])|+|\mathcal{T}(\alpha,[2^{k}])|$, leaving us with
\[|\mathcal{T}(\alpha/2,[2^{k-1}3])|-|\mathcal{T}(\alpha/6,[2^{k-1}])|+|\mathcal{T}(\alpha/3,[2^{k}3])|-|\mathcal{T}(\alpha/6,[2^{k}3])|.\]
Notice that
\[|\mathcal{T}(\alpha/2,[2^{k-1}3])|\geq |\mathcal{T}(\alpha/6,[2^{k-1}])|\] and
\[|\mathcal{T}(\alpha/3,[2^{k}3])|\geq |\mathcal{T}(\alpha/6,[2^{k}3])|.\]
Thus, 
\[|\mathcal{T}(\alpha/2,[2^{k-1}3])|-|\mathcal{T}(\alpha/6,[2^{k-1}])|+|\mathcal{T}(\alpha/3,[2^{k}3])|-|\mathcal{T}(\alpha/6,[2^{k}3])|\geq 0\]
as required.
\end{proof}
We now wish to leverage Lemma \ref{IE2k9} to prove an improved lower bound in the case of $|\mathcal{T}(2^{\lfloor k/2\rfloor}\cdot 3,[2^{k}3^{2}])|$. To do this, the main insight that is required is that, if one repeatedly applies Lemma \ref{IE2k9}, first to some given $\mathcal{T}(\alpha,[n])$ and then again to some of the terms that result from applying Lemma \ref{TightBound2} to $\mathcal{T}(\alpha,[n])$, there will begin to be ``collisions" between some of the resulting terms. In Lemma \ref{2k9case}, we are able to create many such collisions and utilize this high multiplicity term to improve our lower bound on $|\mathcal{T}(\alpha,[n])|$ with respect to $n$.
\begin{lemma}\label{2k9case}
For $k\in\mathbb{Z}^{+}$, for $\alpha=2^{\lfloor k/2\rfloor}\cdot 3$, we have that $|\mathcal{T}(\alpha,[2^{k}])|=\omega(2^{0.79k})$.
\end{lemma}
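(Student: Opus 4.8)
The plan is to establish $|\mathcal{T}(2^{\lfloor k/2\rfloor}\cdot 3,[2^{k}3^{2}])|=\omega(2^{0.79k})$ — that is, to lower bound the number of tilings of the finite contiguous set $[2^{k}3^{2}]$ by tiles of size $2^{\lfloor k/2\rfloor}\cdot 3$, which is the quantity described in the paragraph preceding the lemma (with $C=[2^{k}]$ the set $\mathcal{T}$ is empty, since $3\nmid 2^{k}$). The engine is a repeated application of Lemma~\ref{IE2k9}, organised as a ``Pascal's triangle'' recursion. Write $h(a,k):=|\mathcal{T}(2^{a}\cdot 3,[2^{k}3^{2}])|$ for integers $a\ge 0$ and $k\ge 0$, and put $h(a,k):=0$ for $a<0$; the target is $h(\lfloor k/2\rfloor,k)$.

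First I would record the recursion. For every integer $a\ge 0$ and every $k\ge 1$ the tile $\alpha:=2^{a}\cdot 3$ satisfies $3\mid\alpha$ and $3^{2}\nmid\alpha$, so Lemma~\ref{IE2k9} applies and gives
\[
h(a,k)\;\ge\;h(a,k-1)+h(a-1,k-1)+|\mathcal{T}(2^{a},[2^{k}])|\;\ge\;h(a,k-1)+h(a-1,k-1),
\]
where for $a=0$ the term $|\mathcal{T}(\alpha/2,[2^{k-1}3^{2}])|=|\mathcal{T}(3/2,[2^{k-1}3^{2}])|$ is $0$ (there is no integer tile of that size), consistently with $h(-1,k-1)=0$. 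The reason for using tiles of the special form $2^{a}\cdot 3$ is precisely that the two retained terms again involve such a tile (exactly one factor of $3$), so the recursion never leaves this family; the discarded, nonnegative term $|\mathcal{T}(2^{a},[2^{k}])|$ is the only one that loses the factor of $3$. For the base case, $h(a,0)=|\mathcal{T}(2^{a}\cdot 3,[9])|$ equals $0$ when $a\ge 1$ (as $2^{a}\cdot 3\nmid 9$) and is at least $1$ when $a=0$ (take $A=\{1,2,3\}$, $B=\{0,3,6\}$; in fact $|\mathcal{T}(3,[9])|=2$).

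Next I would prove, by induction on $k$, that $h(a,k)\ge\binom{k}{a}$ for all integers $a$, with the usual convention $\binom{k}{a}=0$ unless $0\le a\le k$. The base case $k=0$ is the previous paragraph. For the inductive step ($k\ge 1$), the displayed inequality together with the inductive hypothesis at level $k-1$ yields $h(a,k)\ge\binom{k-1}{a}+\binom{k-1}{a-1}=\binom{k}{a}$ by Pascal's identity; the boundary values $\binom{k-1}{-1}=0$ and $h(-1,k-1)=0$ make the case $a=0$ go through, and for $a>k$ both sides are $0$ since then $2^{a}\cdot 3\nmid 2^{k}3^{2}$. This binomial blow-up is the ``high-multiplicity collision'' referred to before the lemma: the tiling of $[9]$ by $\{1,2,3\}$ is reached along $\binom{k}{a}$ distinct branches of the unfolded recursion, but since I only ever chain the inequality of Lemma~\ref{IE2k9} as a black box, no counting or disjointness argument has to be redone.

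Finally I would substitute $a=\lfloor k/2\rfloor$ (which lies in $[0,k]$): $|\mathcal{T}(2^{\lfloor k/2\rfloor}\cdot 3,[2^{k}3^{2}])|=h(\lfloor k/2\rfloor,k)\ge\binom{k}{\lfloor k/2\rfloor}\ge\dfrac{2^{k}}{k+1}$, the last step because $\binom{k}{0},\dots,\binom{k}{k}$ sum to $2^{k}$ and $\binom{k}{\lfloor k/2\rfloor}$ is the largest of these $k+1$ terms. Since $\dfrac{2^{k}/(k+1)}{2^{0.79k}}=\dfrac{2^{0.21k}}{k+1}\to\infty$, this is $\omega(2^{0.79k})$ (indeed $\Omega(2^{k}/\sqrt{k})$ by Stirling, comfortably more than claimed). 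The step I expect to require the most care is the first one: verifying that the hypotheses of Lemma~\ref{IE2k9} genuinely hold at \emph{every} node of the unfolded recursion — this is exactly why the tile must stay of the form $2^{a}\cdot 3$ — and handling the $a=0$ boundary, where the halved tile has non-integer size and contributes nothing. Everything after that is the routine Pascal induction plus an elementary estimate on the central binomial coefficient; one could optionally retain the discarded terms $|\mathcal{T}(2^{a},[2^{k}])|$ and feed them through Lemma~\ref{2kcase} to sharpen constants, but this is not needed.
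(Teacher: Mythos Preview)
Your proof is correct and takes a genuinely different route from the paper's. Both arguments unfold the inequality of Lemma~\ref{IE2k9} into a Pascal-triangle recursion on the pair $(a,k)$ with $h(a,k)=|\mathcal{T}(2^{a}\cdot 3,[2^{k}3^{2}])|$. The paper, however, stops the expansion at depth $j=\lfloor k/2\rfloor$ and lower-bounds by a single \emph{third term} $\binom{k/2}{k/4}\,|\mathcal{T}(2^{\lceil k/4\rceil},[2^{\lceil k/2\rceil}])|$, invoking Lemma~\ref{2kcase} on that last factor to obtain $\Omega(2^{0.5k})\cdot\Omega(2^{0.29k})=\Omega(2^{0.79k})$. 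You instead discard every third term, push the recursion all the way to $k=0$, and rely only on the base value $h(0,0)=|\mathcal{T}(3,[9])|\ge 1$; the Pascal induction then gives $h(\lfloor k/2\rfloor,k)\ge\binom{k}{\lfloor k/2\rfloor}=\Omega(2^{k}/\sqrt{k})$, which is strictly stronger than the lemma's claim and avoids Lemma~\ref{2kcase} altogether. Your boundary checks (the $a=0$ case where $\alpha/2\notin\mathbb{Z}$, and $a>k$ where both sides vanish) are handled correctly.

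What each approach buys: your argument is shorter, self-contained, and yields a sharper exponent for this particular lemma. The paper's route is weaker here but is deliberately structured so that the \emph{third} term --- a tiling count over a set of the same shape $[2^{k'}\prod p_i^{2}]$ with one fewer prime --- can be fed back into the same machinery; this is exactly the recursion $c\mapsto c(1-w/2)+H(w/2)/2$ exploited in Lemma~\ref{LowerBoundCalculation} to drive the exponent past any constant. Your simplification collapses that nested structure, so while it over-delivers for Lemma~\ref{2k9case}, it would not by itself reproduce the super-polynomial lower bound of the next lemma.
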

\begin{proof}
By Lemma \ref{IE2k9}, we know that 
\[|\mathcal{T}(\alpha,[2^{k}3^{2}])|\geq |\mathcal{T}(\alpha,[2^{k-1}3^{2}])|+|\mathcal{T}(\alpha/2,[2^{k-1}3^{2}])|+|\mathcal{T}(\alpha/3,[2^{k}])|.\]
Consider the terms on the right side of the inequality. Notice that we can also apply Lemma \ref{IE2k9} to the first two terms while leaving the third unchanged. If we continue to reapply Lemma \ref{IE2k9} to the first two of the three terms that result (keeping the third term each time), we will be left with a sum of terms in the form of Pascal's triangle.\footnote{Alternatively known as Pingala's triangle, Yang Hui's triangle, and several other names due to its repeated independent discovery.} More formally, all of the terms in the sum would be of the form 
\[{j\choose i}|\mathcal{T}(\alpha/2^{i}3,[2^{k-j}])|,\] 
where the coefficient of ${j\choose i}$ follows from the observation that, when visualized as Pascal's triangle, the term $|\mathcal{T}(\alpha/2^{i}3,[2^{k-j}])|$ appears in the $j^{\text{th}}$ row and $i^{\text{th}}$ column.

To clarify that this is indeed the case, notice that the first term on the right hand side of Lemma \ref{IE2k9} equals the term from the left hand side with $|\mathcal{C}|$ divided by two. By contrast, the second term on the right hand side of Lemma \ref{IE2k9} equals the term from the left hand side with both $\alpha$ and $|\mathcal{C}|$ divided. Thus, when we consider the number of ways to output the aforementioned third term of 
\[{j\choose i}|\mathcal{T}(\alpha/2^{i}3,[2^{k-j}])|,\] 
it follows that Lemma \ref{IE2k9} was applied $j+1$ times (in $j$ cases, $|\mathcal{C}|$ was divided by $2$, while the final application removed the $3^{2}$ term). Further, the tile size $\alpha$ was divided by $2$ only $i$ times. Thus, there are ${j\choose i}$ copies of this term when utilizing Lemma \ref{IE2k9} to fully expand $|\mathcal{T}(\alpha,[2^{k}3^{2}])|$ (except for the third right hand side terms according to Lemma \ref{IE2k9} which never expanded as mentioned prior).

We now wish to find a specific term from this expansion that balances reasonably high multiplicity with a fairly large set of tilings, as this will maximize the value of their product. This desire for a large number of distinct tilings means we wish to have fairly large $|\mathcal{C}|$, with $\alpha=\sqrt{|\mathcal{C}|}$. When $i=\lfloor k/4\rfloor$ and $j=\lfloor k/2\rfloor$, we have the term
\[{k/2 \choose k/4}|\mathcal{T}(2^{\lceil k/4\rceil},[2^{\lceil k/2\rceil}])|\]
(i.e. the term in the row $\lfloor k/2\rfloor+1$ column $\lfloor k/4\rfloor$ when seen as Pascal's triangle). We know that
\[{k/2 \choose k/4}=\Omega(2^{0.5k})\]
and, using Lemma \ref{2kcase}, we know that
\[|\mathcal{T}(2^{\lceil k/4\rceil},[2^{\lceil k/2\rceil}])|=\Omega(2^{0.58k/2})=\Omega(2^{0.29k}).\]
Taken together, this gives us at least $\Omega(2^{0.79k})$ distinct tilings. As the size of $n$ has only been adjusted by a factor of $9$, but the coefficient in the exponent has increased, this gives us $\omega(n^{0.79})$ distinct tilings.
\end{proof}
Lemma \ref{2k9case} is essentially a finite example of a more general method for improving upon Lemma \ref{2kcase}. As mentioned prior, increasing the number of distinct prime factors in $n$ in the correct manner has the effect of increasing the number of valid tilings by a constant factor in the exponent (i.e. going from at least $2^{k/2}$ tilings to $2^{ck/2}$ for some $c$). Further, as one increases the number of prime factors of $n$ in the appropriate manner, $c$ is unbounded. Thus, $\text{max}_{\alpha}[|\mathcal{T}(\alpha,[n])|]$ grows at a super-polynomial rate for infinitely many $n$. We prove this in Lemma \ref{LowerBoundCalculation}, but before we can do so we highlight two differences this proof will have when compared to Lemma \ref{2k9case}.

First, our selection of a row and column from Pascal's triangle from which to extract a term with high multiplicity must be refined, as choosing a row and column simply based upon $k$ (as we did in Lemma \ref{2k9case}) will be insufficiently precise. Second, we will require a generalized version of Lemma \ref{IE2k9}. Notice that the statement of Lemma \ref{IE2k9} is essentially a finite case (as there are only two prime factors of $n$) and thus, applying Lemma \ref{TightBound2} and matching terms in the correct way was sufficient to prove the lemma. Unfortunately, such an approach is not sufficient for the necessary generalization and the the proof of Lemma \ref{TightBound2PartialLowerBound} has much more in common with the proof of Lemma \ref{TightBound2} then Lemma \ref{2k9case}. We now proceed with the proof Lemma \ref{TightBound2PartialLowerBound}.
\begin{lemma}\label{TightBound2PartialLowerBound}
Let $p_{i}$ be the $i^{\text{th}}$ prime, and let $t$, $t_{p_{m}}$, $t_{1}$, and $t_{2}$ be defined as
\begin{multicols}{2}
\begin{description}[itemsep=0mm]
\item $t\triangleq\mathcal{T}\bigg(\alpha,\Big[2^{k}\prod_{i=2}^{m}p_{i}^{2}\Big]\bigg)$
\item $t_{p_{m}}\triangleq\mathcal{T}\bigg(\frac{\alpha}{p_{m}},\Big[2^{k}\prod_{i=2}^{m-1}p_{i}^{2}\Big]\bigg)$
\item $t_{1}\triangleq\mathcal{T}\bigg(\alpha,\Big[2^{k-1}\prod_{i=2}^{m}p_{i}^{2}\Big]\bigg)$
\item $t_{2}\triangleq\mathcal{T}\bigg(\frac{\alpha}{2},\Big[2^{k-1}\prod_{i=2}^{m}p_{i}^{2}\Big]\bigg)$
\end{description}
\end{multicols}
where we define the products to be equal to $1$ if $m=1$ and $0$ if $m=0$. It follows that
\[|t|\geq |t_{p_{m}}|+|t_{1}|+|t_{2}|.\]
\end{lemma}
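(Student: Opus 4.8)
The plan is to mirror the proof of Lemma \ref{TightBound2} rather than the term-matching manipulation used for Lemma \ref{IE2k9}: I will exhibit three injections into $t=\mathcal{T}(\alpha,[n])$, one from each of $t_{p_m}$, $t_1$, $t_2$, whose images are pairwise disjoint, and conclude $|t|\ge|t_{p_m}|+|t_1|+|t_2|$. Here $n=2^{k}\prod_{i=2}^{m}p_i^{2}$. Recall from the proof of Lemma \ref{TightBound2} that, for any prime $v$ dividing $n$, there are injections $f_1^{(v)}\colon\mathcal{T}(\alpha,[n/v])\to\mathcal{T}(\alpha,[n])$ and $f_2^{(v)}\colon\mathcal{T}(\alpha/v,[n/v])\to\mathcal{T}(\alpha,[n])$, with $f_1^{(v)}$ a bijection onto the tilings satisfying $k_s=1$ and $v\mid(k_s+k_r)$, and $f_2^{(v)}$ a bijection onto the tilings satisfying $v\mid k_s$. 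The one additional structural fact I will extract from the explicit definition of $f_2^{(v)}$ (namely $A_i=\{x:\lfloor x/v\rfloor\in A_j\}$) is that $f_2^{(v)}$ blows up each segment and rift of its input by a factor of exactly $v$; in particular it carries a tiling with $k_s=1$ to a tiling with $k_s=v$.

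I will first dispose of the degenerate cases: if $m\le 1$, or $k=0$, or $p_m\nmid\alpha$, then $t_{p_m}$ is empty (by the stated product convention, or because $\alpha/p_m$ fails to be a positive integer), and the claim collapses to the identity of Lemma \ref{TightBound2} applied at the prime $2$, or is vacuous. So assume $m\ge 2$, $k\ge 1$, and $p_m\mid\alpha$; note $p_m\ge 3$ and $p_m^{2}\mid n$. Apply $f_1^{(2)}$ to $t_1=\mathcal{T}(\alpha,[n/2])$ and $f_2^{(2)}$ to $t_2=\mathcal{T}(\alpha/2,[n/2])$, obtaining injections into $t$ with images $I_1=\{\,(A,B)\in t:k_s=1\,\}$ and $I_2=\{\,(A,B)\in t:2\mid k_s\,\}$. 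For the third map, define $g\triangleq f_2^{(p_m)}\circ f_1^{(p_m)}$, where the inner map runs $\mathcal{T}(\alpha/p_m,[n/p_m^{2}])\to\mathcal{T}(\alpha/p_m,[n/p_m])$ and the outer map runs $\mathcal{T}(\alpha/p_m,[n/p_m])\to t$; the parameters thread correctly because $n/p_m^{2}=(n/p_m)/p_m$ and $(\alpha/p_m)\cdot p_m=\alpha$, and all of $\alpha/p_m$, $n/p_m$, $n/p_m^{2}$ are positive integers under the standing hypotheses. As a composition of injections $g$ is injective, and since $f_1^{(p_m)}$ produces tilings with $k_s=1$ and $f_2^{(p_m)}$ then rescales $k_s$ by $p_m$, the image $I_g$ of $g$ is contained in $\{\,(A,B)\in t:k_s=p_m\,\}$.

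Finally, $I_1$, $I_2$, $I_g$ are pairwise disjoint: the conditions $k_s=1$, $2\mid k_s$, and $k_s=p_m$ (an odd number that is at least $3$) are mutually exclusive. Therefore $|t|\ge|I_1|+|I_2|+|I_g|$, and since $f_1^{(2)}$, $f_2^{(2)}$, $g$ are bijections onto their respective images this equals $|t_1|+|t_2|+|t_{p_m}|$, which is the desired inequality. The step that needs the most care is verifying, from the explicit formulas of Lemma \ref{TightBound2}, that $f_2^{(v)}$ multiplies $k_s$ by exactly $v$ (so that the composite $g$ lands among tilings with $k_s=p_m$, not merely $p_m\mid k_s$); this exact equality is precisely what makes $I_g$ disjoint from $I_2$, and the order of composition matters — composing in the other order ($f_1^{(p_m)}$ outermost) would force $k_s=1$ and collide with $I_1$.
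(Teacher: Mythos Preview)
Your argument is correct and follows the same global strategy as the paper (three injections into $t$ with pairwise disjoint images), and your maps for $t_1$ and $t_2$ coincide with the paper's $f'_1$ and $f'_2$. The genuine difference is in the third injection. The paper's $f_{p_m}$ is defined piecewise on the parity of the input's first-segment length: when $k_{(s,t_{p_m})}$ is even it first applies $f_1$ and then enlarges $A$, landing among tilings with $k_s=1$ and $k_r$ even (hence disjoint from the image of $f'_1$, which the paper pins down as $k_s=1$ with $k_r$ odd); when $k_{(s,t_{p_m})}$ is odd it first applies $f_2$ and then enlarges $B$, landing among tilings with odd $k_s>1$. Your $g=f_2^{(p_m)}\circ f_1^{(p_m)}$ is uniform, avoids the case split entirely, and separates the three images using only the single invariant $k_s\in\{1\}\cup 2\mathbb{Z}\cup\{p_m\}$; this is cleaner than the paper's reliance on the parity of $k_r$ in one branch. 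The price is that you must argue the sharper fact that $f_2^{(v)}$ multiplies $k_s$ by \emph{exactly} $v$ (the paper only records $v\mid k_s$), which you correctly extract from the explicit formula. One small slip: you write $I_1=\{(A,B)\in t:k_s=1\}$, but the image of $f_1^{(2)}$ is only the subset with $2\mid(k_s+k_r)$, as you yourself stated a few lines earlier; since you only need $I_1\subseteq\{k_s=1\}$ for disjointness, this does not affect the proof.
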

\begin{proof}
Throughout this proof, we will use $(A_{t},B_{t})$ to refer to $A$ and $B$ such that $(A,B)\in t$. We prove the lemma via a combinatorial argument in which we prove that $t_{p_{m}}$, $t_{1}$ and $t_{2}$ have size equal to that of three disjoint subsets of $t$. More specifically, we define mappings $f_{p_{m}}$, $f'_{1}$, and $f'_{2}$ from $t_{p_{m}}$, $t_{1}$ and $t_{2}$ respectively to $t$, such that their codomains are disjoint. All three mappings are similar to mappings $f_{1}$ or $f_{2}$ from the proof of Lemma \ref{TightBound2}. This is especially true of $f'_{1}$ and $f'_{2}$, whereas $f_{p_{m}}$ requires a more complex piece-wise definition. After defining a mapping, we briefly analyze several properties of elements of that mappings codomain. By the fact that the elements of the codomain of each mapping have mutually-exclusive properties, we are able to conclude that the codomains of the mappings are disjoint sets as desired.

For convenience, we briefly review the definitions of $f_{1}$ and $f_{2}$ from Lemma \ref{TightBound2} at a high level. For these functions formal definitions, we refer the reader back to Lemma \ref{TightBound2}. $f_{1}(x,y)$ takes a pair as input, where $x$ is a target set of tilings $\mathcal{T}(\alpha,[n])$ and where $y$ is some $(A,B)\in\mathcal{T}(\alpha,[n/v])$ for fixed $v\geq 2$. The output of $f_{1}$ is some $(A',B')\in\mathcal{T}(\alpha,[n])$ for which $k_{s}=1$. To achieve this, when imagined as a process or algorithm, $f_{1}$ essentially ``pushes apart" the elements of $[n/v]$ by adding $v-1$ points between each (following a similar process for $A$), then builds a larger $B'$ from $B$ so as to cover $[n]$ with $A'+B'$. $f_{2}(x,y)$ takes a pair as input, where $x$ is a target set of tilings $\mathcal{T}(\alpha,[n])$ and where $y$ is some $(A,B)\in\mathcal{T}(\alpha/v,[n/v])$ for fixed $v\geq 2$. The output of $f_{2}$ is some $(A',B')\in\mathcal{T}(\alpha,[n])$ for which $k_{s}>1$. $f_{2}$ is defined in much the same as $f_{1}$, except that one adds $v$ contiguous points to $A'$ for each element of $A$.

We begin by defining $f'_{1}$ using the definition of $f_{1}$. For $(A_{t_{1}},B_{t_{1}})\in t_{1}$, let $f'_{1}(A_{t_{1}},B_{t_{1}})\triangleq f_{1}(t,(A_{t_{1}},B_{t_{1}}))$. By Lemma \ref{TightBound2} all elements in the codomain of $f_{1}$ are such that $k_{(s,t)}=1$. Further, notice that $k_{(r,t)}$ is always odd. To see this, if $k_{(s,t_{1})}>1$, then $k_{(r,t)}=1$ by a single point being inserted between the first and second points of the first segment. Thus, suppose $k_{(s,t_{1})}=1$. If $k_{(r,t_{1})}$ is even, then an odd number of points are added to the first rift to get the first rift of the tiling of $t$ and thus, $k_{r,t}$ is odd. Conversely, if $k_{(r,t_{1})}$ is odd, then an even number of points is added to the first rift to get the first rift of the tiling of $t$ and thus, $k_{(r,t)}$ is odd. Thus, in all cases, an element of the codomain of $f_{1}$ has an odd valued $k_{(r,t)}$.

Next we define $f'_{2}$ using the definition of $f_{2}$ from Lemma \ref{TightBound2}. Let $f'_{2}(A_{t_{2}},B_{t_{2}})=f_{2}(t,(A_{t_{2}},B_{t_{2}}))$. This gives us a tiling of $t$ with even $k_{(s,t)}$ and $k_{(r,t)}$. The first of these facts follows from the fact that each segment from the tiling from $t_{2}$ has had its length exactly doubled. The latter observation then follows from $k_{s}|(k_{s}+k_{r})$ and the fact that $k_{(s,t)}$ is even. Just as in Lemma \ref{TightBound2}, as elements of the codomain of $f_{1}$ are tilings such that $k_{(s,t)}=1$ elements of the codomain of $f_{2}$ have even $k_{(s,t)}$, we know that the codomains of $f_{1}$ and $f_{2}$ are disjoint. 

We now give a piece-wise definition of $f_{p_{m}}$, with one definition when $k_{(s,t_{p_{m}})}$ of the input is even and another when $k_{(s,t_{p_{m}})}$ of the input is odd. For some, $(A_{t_{p_{m}}},B_{t_{p_{m}}})\in t_{p_{m}}$, if $k_{(s,t_{p_{m}})}$ is even, we begin by applying $f_{1}(t,(A_{t_{p_{m}}},B_{t_{p_{m}}}))=(A',B')$. This increases the size of $A_{t_{p_{m}}}+B_{t_{p_{m}}}$ by a multiplicative factor of $p_{m}$ and does not change the size of $A_{t_{p_{m}}}$. This would tile exactly the first $p_{m}$ fraction of points of $A_{t}+B_{t}\in t$ (i.e. $A'+B'=[|A_{t}+B_{t}|/p_{m}]$). Thus, we build our final $A''$ from $A'$ by letting $A''=A'+\{x\cdot p_{m}|x\in[0,p_{m}]\}$. This then gives us our desired output of $f_{p_{m}}$ on such inputs, which is $(A'',B')=(A_{t},B_{t})$. As $k_{(s,t_{p_{m}})}$ is even, so is $k_{(r,t_{p_{m}})}$. As the number of points added to the first rift is $p_{m}-1$ (which is even), we have that $k_{(r,t)}$ is even. This makes the codomain of $f_{p_{m}}$ disjoint from the codomain of $f'_{1}$ in this case. Further, it follows from the portion of the definition based on $f_{1}$ that $k_{(s,t)}=1$. This makes the codomain of $f_{p_{m}}$ disjoint from the codomain of $f'_{2}$ in this case.

Now for the case when $k_{(s,t_{p_{m}})}$ of the input to $f_{p_{m}}$ is odd. For some, $(A_{t_{p_{m}}},B_{t_{p_{m}}})\in t_{p_{m}}$, we begin by applying
\[f_{2}(t,(A_{t_{p_{m}}},B_{t_{p_{m}}}))=(A^{*},B^{*}).\]
This would tile exactly the first $p_{m}$ fraction of points of $A_{t}+B_{t}$ (i.e. $A^{*}+B^{*}=[|A_{t}+B_{t}|/p_{m}]$). Thus, we build our final $B^{**}$ from $B^{*}$ by letting $B^{**}=(B^{*}+\{x\cdot p_{m}|x\in[0,p_{m}]\})\cup \{x\cdot p_{m}|x\in[0,p_{m}]\}$ to get us our desired output of $f_{p_{m}}$ on such inputs of $(A^{*},B^{**})=(A_{t},B_{t})$. As $k_{(s,t_{p_{m}})}$ is odd and we are multiplying the size of the first segment by $p_{m}$ (which is also odd), we are left with an odd value for $k_{(s,t)}$. This makes the codomain of $f_{p_{m}}$ disjoint from the codomain of $f'_{2}$ in this case. Further, it follows from the first part of the mapping that $k_{(s,t)}>1$. This makes the codomain of $f_{p_{m}}$ disjoint from the codomain of $f'_{1}$ in this case. As we have shown the codomains of each function to be disjoint, the lemma follows.
\end{proof}
We note that the bound in Lemma \ref{TightBound2PartialLowerBound} is relatively tight as, if one where to substitute in 
\[t'_{p_{m}}=\mathcal{T}\bigg(\frac{\alpha}{p_{m}},\Big[p_{m}2^{k}\prod_{i=2}^{m-1}p_{i}^{2}\Big]\bigg)\]
in place of $t_{p_{m}}$, the inequality would be provably false (where the only difference between the two is a multiplicative factor of $p_{m}$ in the set to be tiled). We now prove that $\underset{\alpha}{\text{max}}[|\mathcal{T}(\alpha,[n])|]$ grows at a super-polynomial rate for infinitely many $n$.
\begin{lemma}\label{LowerBoundCalculation}
For all $c\in\mathbb{R}$ there exists an $m\in\mathbb{Z}^{+}$ such that, for $k\in\mathbb{Z}^{+}$ and $\alpha=2^{\lfloor k/2\rfloor}\prod_{i=2}^{m}p_{i}$, we have that 
\[\bigg|\mathcal{T}\bigg(\alpha,\Big[2^{k}\prod_{i=2}^{m}p_{i}^{2}\Big]\bigg)\bigg|=\omega(2^{ck}).\]
\end{lemma}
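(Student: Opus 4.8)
The plan is to iterate Lemma~\ref{TightBound2PartialLowerBound} in exactly the way Lemma~\ref{2k9case} iterates Lemma~\ref{IE2k9}, but now with $m$ growing, so that the Pascal's-triangle expansion happens in $m-1$ independent ``layers'' and the accumulated multiplicity is a product of binomial coefficients rather than a single one. Concretely, fix $m$ and $k$ and set $\alpha=2^{\lfloor k/2\rfloor}\prod_{i=2}^{m}p_{i}$ and $n=2^{k}\prod_{i=2}^{m}p_{i}^{2}$, so that $\alpha=\sqrt{n}$. I would repeatedly apply Lemma~\ref{TightBound2PartialLowerBound}: each application of the lemma to a term $\mathcal{T}(\alpha',[2^{k'}\prod_{i=2}^{m'}p_i^2])$ replaces it by the three terms $|t_{p_{m'}}|+|t_1|+|t_2|$. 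Keeping the ``$t_1,t_2$'' branch (which only halves $k$ and the tile size by a factor of at most $2$) produces a Pascal triangle in the variable pair $(\text{number of halvings of }k,\ \text{number of halvings of }\alpha)$, exactly as in Lemma~\ref{2k9case}; and the ``$t_{p_{m'}}$'' branch strips off one prime $p_{m'}$ (dividing both $\alpha$ and $n$ by $p_{m'}$, and $n$ by $p_{m'}$ a second time). After fully expanding, I would track the coefficient of a carefully chosen surviving term: one in which, for each $i=2,\dots,m$, the prime $p_i$ has been stripped after roughly half of the available halvings of $k$ at that stage, and in which the tile size is kept near the square root of the remaining set size. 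The multiplicity of such a term is a product of $m-1$ binomial coefficients each of the form $\binom{\ell_i/2}{\ell_i/4}=\Omega(2^{\ell_i/2})$ for appropriate stage-lengths $\ell_i$, and the surviving $|\mathcal{T}(2^{a},[2^{2a}])|$ factor is $\Omega(2^{0.58a})$ by Lemma~\ref{2kcase}.

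The bookkeeping step I would carry out next is to choose how to split $k$ among the $m-1$ prime-stripping stages. The cleanest choice is to give each of the $m-1$ primes an equal share: between consecutive prime-strippings, perform about $k/(2(m-1))$ halvings of $k$ in the $t_2$ direction (collecting a $\binom{k/(2(m-1))}{k/(4(m-1))}=\Omega(2^{k/(2(m-1))})$ factor each time from the Pascal layer, via the standard $\binom{2N}{N}=\Theta(4^N/\sqrt N)$ estimate), and then apply the $t_{p_m}$ branch. Doing this $m-1$ times and then invoking Lemma~\ref{2kcase} on the leftover $\mathcal{T}(2^{\Theta(k)},[2^{\Theta(k)}])$ term yields a lower bound of the shape $2^{(m-1)\cdot k/(2(m-1))}\cdot 2^{0.58\cdot\Theta(k)}$ — wait, that particular split makes the binomial contribution only $2^{k/2}$ total, so instead I would \emph{not} split $k$ but rather keep $k$ essentially intact and add the prime-stripping as extra depth: each prime $p_i$ contributes its own full Pascal triangle of height $\Theta(k)$ before being stripped, because the ``$t_1,t_2$'' recursion on $2^{k}\prod_{i=2}^{m}p_i^2$ can be run down to $k=O(1)$ and then the prime $p_m$ stripped, then $k$ is \emph{restored} conceptually by noticing the remaining set still has a large power of $2$ — actually the correct mechanism, mirroring Lemma~\ref{2k9case} precisely, is that stripping $p_m$ via the $t_{p_m}$ term lands on $\mathcal{T}(\alpha/p_m,[2^{k}\prod_{i=2}^{m-1}p_i^2])$ with $k$ \emph{unchanged}, so one genuinely gets $m-1$ nested Pascal triangles each of height $\Theta(k)$, contributing $\Omega(2^{ck})$ with $c$ growing linearly in $m$. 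I would pick the surviving row/column in the $i$-th triangle to be (row $\approx k/2$, column $\approx k/4$) as in Lemma~\ref{2k9case}, giving a $\binom{k/2}{k/4}=\Omega(2^{k/2})$ factor per prime, hence total multiplicity $\Omega(2^{(m-1)k/2})$, times the final $\Omega(2^{0.29k})$ from Lemma~\ref{2kcase}.

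Putting the pieces together: the chosen term has multiplicity $\Omega(2^{(m-1)k/2})$ and the tiling-count factor $\Omega(2^{0.29k})$, so $|\mathcal{T}(\alpha,[n])|=\Omega\!\big(2^{((m-1)/2+0.29)k}\big)$. Given any $c\in\mathbb{R}$, choosing $m$ with $(m-1)/2+0.29>c$ makes this $\omega(2^{ck})$, which is the claim. The main obstacle I expect is the \textbf{precision of the row/column selection across multiple layers}: in Lemma~\ref{2k9case} one could afford the crude choice $i=\lfloor k/4\rfloor$, $j=\lfloor k/2\rfloor$ because there was a single binomial coefficient, but with $m-1$ layers the stage-lengths in $k$ shrink or shift slightly at each prime-stripping (floors, parity of $k$, the off-by-one terms $(1,\cdot)$ hidden inside $t_{p_m}$), and one must verify that at each layer the remaining ``budget'' in $k$ is still $\Theta(k)$ — i.e. that the constant hidden in $\Theta(k)$ does not decay to zero as $m$ grows for fixed-but-large $k$. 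Making $k$ large relative to $m$ (which is legitimate since $m$ is chosen first, then the statement quantifies over all $k\in\mathbb{Z}^+$, and only the asymptotics in $k$ matter) resolves this: for $k\gg m$ every stage-length is $\Theta(k)$ with a uniform constant, the binomial estimates apply, and the product telescopes cleanly. I would also need a short induction, analogous to the induction on $k$ in Lemma~\ref{2kcase}, to handle small $k$ and the staggering caused by parity, absorbing it into the constant just as the ``$-2$ in the exponent'' trick does there.
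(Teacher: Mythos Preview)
There is a genuine gap in your accounting. You correctly observe that the $t_{p_m}$ branch of Lemma~\ref{TightBound2PartialLowerBound} leaves the exponent $k$ unchanged, but this is only true when $t_{p_m}$ is invoked \emph{before} any $t_1,t_2$ branching. To collect a factor of $\binom{k/2}{k/4}$ you must first run the $t_1,t_2$ recursion $k/2$ times, and each such step decrements the exponent of $2$ in the interval by one. When you then strip $p_m$, the surviving term is $\mathcal{T}\!\big(\alpha/(2^{k/4}p_m),\big[2^{k/2}\prod_{i=2}^{m-1}p_i^2\big]\big)$, with the power of $2$ halved, not restored. The $m-1$ Pascal triangles therefore share a single budget of $k$ in total depth; running each to row $k/2$ as you propose is impossible once $m>2$. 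If instead you split the budget evenly, each triangle contributes roughly $\binom{k/(m-1)}{k/(2(m-1))}\approx 2^{k/(m-1)}$ and the product of all $m-1$ of them is $\approx 2^{k}$, not $2^{(m-1)k/2}$. A direct lattice-path count through the three-branch recursion in fact gives only $\binom{k+m-1}{m-1}\binom{k}{k/2}$, which for fixed $m$ is $2^{k+(m-3/2)\log k+O(1)}$---exponent $1+o(1)$, not growing with $m$.

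The paper's proof explicitly acknowledges this ceiling: it first computes that iterating the Lemma~\ref{2k9case} argument verbatim drives the exponent toward $1$ and no further. To break past $1$ it introduces a second parameter $w$ governing how far down the triangle one descends at each stage, and argues that the fixed point of the resulting recursion is $H(w/2)/w$, which diverges as $w\to 0$. Your proposal is missing exactly this ingredient: without some mechanism beyond stacking middle-column binomials, the shared $k$-budget caps the achievable exponent at $1$, and the conclusion $\omega(2^{ck})$ for arbitrary $c$ does not follow.
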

\begin{proof}
First, observe that we can now use Lemma \ref{2k9case} in conjunction with Lemma \ref{TightBound2PartialLowerBound} to get arbitrarily close to $2^{k}$. If we rework the statement and proof of Lemma \ref{2k9case}, multiplying $n$ by $5^{2}$ and $a$ by $5$, notice that we can repeat all of the arguments of the proof, except that the line 
\[|\mathcal{T}(2^{\lceil k/4\rceil},[2^{\lceil k/2\rceil}])|>\Omega(2^{0.58k/2})=\Omega(2^{0.29k})\]
is replaced by
\[|\mathcal{T}(2^{\lceil k/4\rceil}3,[2^{\lceil k/2\rceil}3^{2}])|>\Omega(2^{0.79k/2})>\Omega(2^{0.38k}).\]
This process of increasing the $m$ (i.e. the number of primes squared in $n$) can be repeated any number of times. Each time, the growth of the choose function is multiplied by the previous tiling growth rate with an additional $1/2$ multiplicative factor in the exponent. More specifically, suppose for some $m$ that the number of tilings is $2^{xk}$ for $x=1-y$ and $0<y<1$. Then for $m+1$, the number of sumset tilings would be $2^{(x/2+0.5)k}=2^{(1-(y/2))k}$. Thus, increasing the value of $m$ by $1$ increases $c$ by $(1-c)/2$ and the limit of $c$ as $m$ goes to infinity is $1$.

In the proof of Lemma \ref{2k9case}, we chose to count the number of tilings based on the middle column of the bottom row of the version of Pascal's triangle produced utilizing Lemma \ref{IE2k9}. In fact, we could have chosen the middle column of any row some fraction of the way down the triangle from which to count tilings from instead. Notice that the value of ${k/2w\choose k/4w}$ for $0<w\leq 1$ is roughly equal to $2^{kH(w/2)/2}$, where $H$ is the binary entropy function and 
\[H(w)=-w\log w-(1-w)\log(1-w).\]
Further, notice that $1-(w/2)$ is the amount we multiply $k$ by in $n$. Thus, adjusting $w$ increases the number of tilings of the $\mathcal{T}$ instance in question, but reduces the coefficient derived from the binomial coefficient. Similarly to the prior case of $w=1$, suppose for some $m$ that the number of tilings is $2^{xk}$ for $x=(H(w/2)/w)-y$ and $0<y<H(w/2)/w$. Then for $m+1$, the number of tilings would be
\[2^{(x(1-(w/2))+H(w/2)/2)k}=2^{((H(w/2)/w)-y+(w/2)y-H(w/2)/2+H(w/2)/2)k}=2^{(H(w/2)/w)+y((w/2)-1)k}.\]
Given this, the max value of $c$ we can achieve for a fixed $w$ is $\lim_{m\to\infty}c=H(w/2)/w$. Notice that $w$ decreases asymptotically faster than $H(w/2)$ and that, for sufficiently large $k$, we can choose an arbitrarily small $w$. Thus, the max value of $c$ tends to infinity as $w$ tends to $0$ and $k$ tends to infinity.
\end{proof}

We note, while one could expand upon the infinite families of values of $n$ for which Lemma \ref{LowerBoundCalculation} holds, one cannot prove a super-polynomial bound for all $n$. This follows immediately from the fact that, if $n$ is prime, the only tile sizes that tile $n$ are when $\alpha\in\{1,n\}$ and in each case, there is only a single distinct tiling of $[n]$.

\section{Extending to $\mathbb{Z}^{d}$}
We now extend the above results for finite subsets of $\mathbb{Z}$ to results for $\mathbb{Z}^{d}$ for arbitrary $d\in\mathbb{Z}^{+}$. In their conclusion, Bodini and Rivals \cite{BR06} correctly claim that their results about tiling finite intervals of the discrete line can be extended to $\mathbb{Z}^{d}$ and briefly mention what the characterization of the tilings in this setting would be. As we work with the additional restriction of fixed tile size we adjust our claim appropriately, but we note that our core idea appears to align witht heir own. Further, our proof verifies their claim, as one can sum over all tile sizes to recover their claim. Once we have extended our structural results to $\mathbb{Z}^{d}$, we use this to show that our upper and lower bounds from the previous section still hold in $\mathbb{Z}^{d}$.\footnote{The fact that the lower bound still holds follows immediately from the fact that one can set $d=1$ and use the same case and argument as in Lemma \ref{TightBound2PartialLowerBound} and Lemma \ref{LowerBoundCalculation}.} The extension of our structural results appears as Lemma \ref{d-dim} and proceeds by induction however, the base case of the induction is somewhat non-trivial in that it requires a fact about the distribution of the elements of $A$ with respect to $B$ for any $(A,B)$ that is valid tiling of $[n]$. For the sake of clarity, we separate out this base case structural result and present it as Lemma \ref{Overlap} before proving Lemma \ref{d-dim}.
\begin{lemma}\label{Overlap}
Let $C=[n]$. For $m\in\mathbb{Z}$, either $|A\cap(B+m)|=1$ or $|C\cap(B+m)|<|B+m|$.
\end{lemma}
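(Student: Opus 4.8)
The plan is to prove the equivalent statement: \emph{if $B+m\subseteq C$, then $|A\cap(B+m)|=1$}. Since $|B+m|=|B|$ for every $m$, the failure of $B+m\subseteq C$ is exactly $|C\cap(B+m)|<|B+m|$, so the two alternatives in the lemma cover all cases once this implication is established. Throughout I would use that $(A,B)$ being a valid tiling means $A+B=C=[n]$ together with $|C|=|A||B|$; since $|A+B|\le |A||B|$ always, with equality only when all the sums $a+b$ are distinct, the decomposition here is \emph{unique}: every $c\in C$ is $a+b$ for exactly one pair $(a,b)\in A\times B$. I will write $c=a_c+b_c$ for this unique decomposition, and I recall that the canonical representative has $0\in B$ and (from $\min[A]+\min[B]=1$) that $A\subseteq[n]$.

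Now fix $m$ with $B+m\subseteq C$ (otherwise there is nothing to prove). Under this assumption $a_{b+m}$ and $b_{b+m}$ are defined for every $b\in B$. Since $b\mapsto b+m$ is injective, $|A\cap(B+m)|=|\{b\in B:\ b+m\in A\}|$; and $b+m\in A$ if and only if $b_{b+m}=0$ — the ``if'' direction because $0\in B$ makes $(b+m)+0$ a valid $A+B$-decomposition, and the ``only if'' direction because that same decomposition must then be the unique one. Define $\phi\colon B\to B$ by $\phi(b)=b_{b+m}$. The whole proof then reduces to showing that $\phi$ is a bijection, since in that case $|\phi^{-1}(0)|=1$, which is precisely $|A\cap(B+m)|=1$.

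To see that $\phi$ is injective — hence, $B$ being finite, bijective — suppose $\phi(b_1)=\phi(b_2)=b^{*}$. Writing $a_i:=a_{b_i+m}\in A$, we have $a_i+b^{*}=b_i+m$, i.e.\ $a_i=b_i+m-b^{*}$ for $i=1,2$. The key observation is the symmetric identity
\[
a_1+b_2=(b_1+m-b^{*})+b_2=(b_2+m-b^{*})+b_1=a_2+b_1,
\]
and this common element lies in $A+B=C$ because $a_1\in A$ and $b_2\in B$ — note no appeal to $B+m\subseteq C$ is needed for this step. Thus this element of $C$ has the two $A+B$-decompositions $a_1+b_2$ and $a_2+b_1$, so uniqueness forces $a_1=a_2$ and $b_1=b_2$. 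This establishes injectivity and finishes the argument.

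Every step is short; the only real care is the bookkeeping of which hypothesis each step relies on. The one load-bearing idea — and the place where a careless argument would slip — is the identity $a_1+b_2=a_2+b_1$ together with the observation that this element automatically lies in $C$ because $A+B=C$ \emph{exactly}. That is what lets uniqueness of the tiling decomposition do all the work, with the hypothesis $B+m\subseteq C$ entering only to make $\phi$ well-defined.
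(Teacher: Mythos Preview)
Your proof is correct and takes a genuinely different, more elementary route than the paper. The paper argues by induction on $\Omega^{*}(n)$, the total number of prime factors of $n$: in the base case one of $A,B$ equals $C$, and in the inductive step the paper invokes the segment/meta-point structure of tilings of $[n]$ developed earlier (Lemma~\ref{SegmentRiftSize}, Definition~\ref{meta-points}) to pass from $n$ to $n/w$. Your argument instead uses nothing about $[n]$ beyond the two defining facts of a valid tiling --- $A+B=C$ and $|C|=|A||B|$ --- together with $0\in B$. The map $\phi(b)=b_{b+m}$ and the exchange identity $a_1+b_2=a_2+b_1$ reduce the claim to a pure unique-decomposition statement, so no structural lemmas or induction are needed.

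What this buys you: your proof is shorter, self-contained, and in fact proves the statement for an arbitrary finite $C\subset\mathbb{Z}^{d}$, not just $C=[n]$. That is a real bonus here, since the paper's only use of Lemma~\ref{Overlap} is inside the $d$-dimensional induction of Lemma~\ref{d-dim}, where having the conclusion for more general $C$ (and stated cleanly) would streamline the argument. The paper's approach, by contrast, leans on and reinforces the segment/rift machinery, which has value elsewhere in the paper but is heavier than necessary for this particular lemma.
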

\begin{proof}
We proceed by induction on the $\Omega^{*}(n)$. For the base case of $\Omega^{*}(n)=1$, we have that exactly one of $|A|$ and $|B|$ is $n$. Assume without loss of generality that $|A|=n$, then $A=C$ and $B+m$ is either in $C$ and has a unique intersection with $A$ or is not in $C$. Thus, the base case holds. Suppose the lemma holds for $\Omega^{*}(n)=k-1$ and we prove that it holds for $\Omega^{*}(n)=k$. Notice that, as $k>1$, either $A$ or $B$ has segments of size greater than $1$. Suppose without loss of generality that $A$ has segments of size greater than $1$. Thus, we can group $C$ into meta-points such that, for each meta-point, the consecutive elements it encompasses are either all in $A$ or all not in $A$. Further, by the definition of $B$, its segment size is $1$ and its minimum rift size is the segment size of $A$ minus $1$. Thus, it follows that each meta-point of $C$ is such that either its first element is in $B$ or it has no elements in $B$. Let $w>1$ be the number of points per meta-point and suppose $m$ is such that $|C\cap(B+m)|=|(B+m)|$ (if not, we are done). As meta-points have size greater then $1$, we can reconsider the intersection of $|A\cap(B+m)|=1$ with respect $n/w$ points and $\lfloor m/w\rfloor$ and apply our inductive hypothesis to say that exactly one meta-point of $C$ that is entirely elements of $A$ intersects a meta-point of $B+\lfloor m/w\rfloor$. This gives us an intersection of $A\cap(B+\lfloor m/w\rfloor)$ at the first point of a meta point. If we replace $\lfloor m/w\rfloor$ by $m$ as required, this increases the shift by at most $w-1$. As the intersection between $A$ and $B+\lfloor m/w\rfloor$ is in the first point of a meta-point and the meta-point in question contains $w$ consecutive elements, $|A\cap(B+m)|=1$ still holds and the lemma follows.
\end{proof}
We can now prove our main lemma of the section.
\begin{lemma}\label{d-dim}
Let $C=C_{1}\times C_{2}\times\ldots\times C_{d}$ for $C_{i}$ such that $C_{i}=[n_{i}]$ for some $n_{i}\in\mathbb{Z}^{+}$. We have that
\[|\mathcal{T}((\alpha_{1},\alpha_{2},\ldots,\alpha_{d}),C)|=|\mathcal{T}(\alpha_{1},C_{1})|\cdot|\mathcal{T}(\alpha_{2},C_{2})|\cdot\ldots\cdot|\mathcal{T}(\alpha_{d},C_{d})|.\]
More specifically, $(A,B)$ is an element of valid tilings of $\mathcal{T}((\alpha_{1},\alpha_{2},\ldots,\alpha_{d}),C)$ if and only if\\$A=\{(x_{1},x_{2},\ldots,x_{d}):x_{i}\in A_{i}\}$ and $B=\{(y_{1},y_{2},\ldots,y_{d}):y_{i}\in B_{i}\}$ where $(A_{i},B_{i})\in\mathcal{T}(\alpha_{i},C_{i})$. Further, for $m\in\mathbb{Z}^{d}$, either $|A\cap(B+m)|=1$ or $|C\cap(B+m)|<|B+m|$.
\end{lemma}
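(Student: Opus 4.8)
The plan is to induct on the dimension $d$. The base case $d=1$ is immediate: $\mathcal{T}((\alpha_1),C_1)=\mathcal{T}(\alpha_1,C_1)$, so the product formula is trivial, and the overlap statement is exactly Lemma \ref{Overlap}. For the inductive step, assume the full lemma (product structure, counting identity, and overlap property) in dimension $d-1$, and write $C=C_1\times C'$ with $C'=C_2\times\ldots\times C_d$.

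I would dispatch the easy direction first. Given $(A_i,B_i)\in\mathcal{T}(\alpha_i,C_i)$ for each $i$, the sets $A=A_1\times\ldots\times A_d$ and $B=B_1\times\ldots\times B_d$ satisfy $A+B=(A_1+B_1)\times\ldots\times(A_d+B_d)=C$ since the Minkowski sum distributes over Cartesian products; because $\text{proj}_i(A)=A_i$ we get $|\text{proj}_i(A)|=\alpha_i$, and $(0,\ldots,0)\in B$ with $\min[\text{proj}_i(B)]=\min[B_i]\geq 0$, so $(A,B)$ is the canonical representative of a valid tiling in $\mathcal{T}((\alpha_1,\ldots,\alpha_d),C)$. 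This gives $|\mathcal{T}((\alpha_1,\ldots,\alpha_d),C)|\geq\prod_i|\mathcal{T}(\alpha_i,C_i)|$, and the assignment $((A_i,B_i))_i\mapsto(\prod_iA_i,\prod_iB_i)$ is injective since one recovers $A_i=\text{proj}_i(A)$.

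The hard direction is to show every $(A,B)\in\mathcal{T}((\alpha_1,\ldots,\alpha_d),C)$ arises this way, and the step I expect to be the main obstacle is a factorization claim: such a tiling splits as $A=A_1\times\bar A$, $B=B_1\times\bar B$ with $(A_1,B_1)\in\mathcal{T}(\alpha_1,C_1)$ and $(\bar A,\bar B)\in\mathcal{T}((\alpha_2,\ldots,\alpha_d),C')$. To prove it I would slice along the first coordinate: for $j\in[n_1]$ the layer $\{j\}\times C'$ is partitioned by the nonempty sets $(A+b)\cap(\{j\}\times C')=\{j\}\times(\bar A^{(j-b_1)}+\bar b)$, where $\bar A^{(u)}\subseteq C'$ is the slice $\{\bar a:(u,\bar a)\in A\}$ and $b=(b_1,\bar b)$. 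Thus for every $j$ the set $C'$ is exactly tiled by a selection of translates of the slices $\bar A^{(u)}$; showing layer by layer (using the first-coordinate relation $A_1+B_1=[n_1]$ together with Lemma \ref{SegmentRiftSize} and Lemma \ref{Overlap}) that all nonempty $\bar A^{(u)}$ coincide with a single set $\bar A$ and that $B$ factors accordingly is the technical heart, and is essentially the argument sketched by Bodini and Rivals \cite{BR06}. Granting the factorization, the inductive hypothesis applied to $(\bar A,\bar B)$ gives $\bar A=A_2\times\ldots\times A_d$, hence $A=A_1\times\ldots\times A_d$; combined with the injectivity above, this yields the counting identity.

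Finally, the overlap property in $\mathbb{Z}^d$ follows from the established product structure: for $m=(m_1,\ldots,m_d)$ we have $C\cap(B+m)=\prod_i(C_i\cap(B_i+m_i))$ and $A\cap(B+m)=\prod_i(A_i\cap(B_i+m_i))$. If $|C\cap(B+m)|=|B+m|=\prod_i|B_i|$, then since each factor satisfies $|C_i\cap(B_i+m_i)|\leq|B_i|$, equality forces $|C_i\cap(B_i+m_i)|=|B_i|$ for every $i$; Lemma \ref{Overlap} applied in each coordinate gives $|A_i\cap(B_i+m_i)|=1$, so $|A\cap(B+m)|=1$, completing the induction.
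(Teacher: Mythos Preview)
Your plan is correct and follows essentially the same route as the paper: induct on $d$, invoke Lemma~\ref{Overlap} for $d=1$, and for the inductive step peel off one coordinate to show the tiling factors as a product. The differences are organizational rather than substantive. You peel off the \emph{first} coordinate and phrase the key step as ``all nonempty slices $\bar A^{(u)}$ coincide''; the paper peels off the \emph{last} coordinate and carries out this step via a nested induction on the layer index $z\in[n_{d+1}]$, showing that each layer $C_1\times\cdots\times C_d\times\{z\}$ is covered by a single translate of the bottom-layer copy of $A$. That nested induction is exactly the content you label ``the technical heart'' and leave as a sketch, so if you want a complete proof you should supply it (the paper's use of the $d$-dimensional overlap hypothesis inside this induction is what forces the slices to agree). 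Conversely, your final paragraph deriving the overlap property in $\mathbb{Z}^d$ directly from the established product structure and a coordinatewise application of Lemma~\ref{Overlap} is cleaner than the paper's treatment, which threads this statement through the nested induction rather than isolating it.
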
  
\begin{proof}
To tile the elements of $C_{1}\times\text{min}[C_{2}]\times\ldots\times \text{min}[C_{d}]$, the only elements of $A$ that can be utilized are those in $C_{1}\times \text{min}[C_{2}]\times\ldots\times \text{min}[C_{d}]$. Thus, the elements of $A$ in $C_{1}\times \text{min}[C_{2}]\times\ldots\times \text{min}[C_{d}]$ and the elements of $B$ in $\mathbb{Z}^{+}\times \{0\}\times\ldots\times \{0\}$ exactly correspond to the one-dimensional tiling of $C_{1}$ with $|A|=\alpha_{1}$.

We prove the lemma via two nested inductive arguments. We begin with induction on the dimension $d$. For $d=1$, this follows by definition and Lemma \ref{Overlap}. Thus, suppose this is the case for some $d$ and we wish to prove that the lemma still holds for $d+1$. To do this, we prove by induction on $z$ that, for all $z\in[n_{d+1}]$ there exists a $k\in[z]$ such that, $C_{1}\times C_{2}\times\ldots\times C_{d}\times z$ is tiled by
\[(A\cap(C_{1}\times C_{2}\times\ldots\times C_{d}\times k))+\{b\in B:\text{proj}_{d+1}=z-k\}.\]
For the case of $z=1$ it follows from our inductive hypothesis relative to $d$ that $C_{1}\times\ldots\times C_{d}\times 1$ can only be tilings as described in the lemma. More specifically, one takes the $d$ dimensional tiling, then includes $1$ and $0$ as the $(d+1)^{\text{th}}$ element in the tuples of each element of $A$ and $B$ respectively. Now, for our inductive hypothesis relative to $z$, suppose the claim holds for the subsets of $A$ and $B$ that tile exactly the elements of $C_{1}\times C_{2}\times\ldots\times C_{d}\times [z-1]$ for $z\leq n_{d+1}$. We wish to prove this holds for $C_{1}\times C_{2}\times\ldots\times C_{d}\times z$.

Let $A'$ be the elements of $A$ in $C_{1}\times C_{2}\times\ldots\times C_{d}\times 1$. As $(1,\dots,1,z)$ must be tiled, this must either be because $(1,\dots,1,z)\in A$ or $(1,\dots,1,z)$ is covered by a translation of an element of $A$ in $C_{1}\times C_{2}\times\ldots\times C_{d}\times [z-1]$. First, suppose $(1,\dots,1,z)\in A$. Let $B^{*}\subset B$ be such that 
\[A'+B^{*}=C_{1}\times C_{2}\times\ldots\times C_{d}\times 1\]
and let $b'=(y_{1},\ldots,y_{d+1})$ for $y_{d+1}\in[z-1]$ be an element of $B$ that translates an element of $A\cap(C_{1}\times C_{2}\times\ldots\times C_{d}\times [z-1])$ to $C_{1}\times C_{2}\times\ldots\times C_{d}\times z$. For now, let us assume that $y_{d+1}=z-1$. By our inductive hypothesis on $d$, we have that either $|A\cap(B+m)|=1$ or $|C\cap(B+m)|<|B+m|$ for $m\in\mathbb{Z}^{d}$. Thus, it follows that $|((1,\dots,1,z)+B^{*})\cap(A'+b')|=1$ or $|C\cap A'+b'|<|A'+b'|$ which implies that $C_{1}\times C_{2}\times\ldots\times C_{d}\times z$ cannot be tiled by elements of $A'$ if it $(1,\dots,1,z)\in A$. Notice then that this together with our inductive hypothesis on $z$ implies that, for $k\in[z-1]$, $A\cap(C_{1}\times C_{2}\times\ldots\times C_{d}\times k)=\emptyset$, or that $A\cap(C_{1}\times C_{2}\times\ldots\times C_{d}\times k)=A'$. This observation then allows us to circle back and inductively remove the restriction that $y_{d+1}=z-1$ from the prior argument. Taken together, these restrictions turn the case of tiling $C_{1}\times C_{2}\times\ldots\times C_{d}\times z$ when $(1,\dots,1,z)\in A$ into tiling a $d$-dimensional space with fixed $B^{*}$, at which point we can apply the inductive hypothesis $d$ to conclude that the claim holds.

The argument for the case of $(1,\dots,1,z)\not\in A$ proceeds similarly, so we merely outline the differences at a high level. To cover $(1,\dots,1,z)$, it follows that there exists a $b'$ such that $(A\cap(C_{1}\times C_{2}\times\ldots\times C_{d}\times k))+b'=(A'+(0,\ldots,0,k-1))+b'$ covers $(1,\dots,1,z)$. Adding any element $a'\in C_{1}\times C_{2}\times\ldots\times C_{d}\times z$ to $A$ then creates an issue, as $a'=(1,\dots,1,z)+m$ for some $m$ such that $\text{proj}_{d+1}(m)=0$. At this point we treat $(A\cap(C_{1}\times C_{2}\times\ldots\times C_{d}\times k))+b'$ and $a'+B^{*}$ as $A$ and $B$ respectively from the statement that, for $m\in\mathbb{Z}^{d}$, either $|A\cap(B+m)|=1$ or $|C\cap(B+m)|<|B+m|$.
\end{proof}
Lemma \ref{d-dim} can then be leveraged to prove that $\underset{\alpha,C}{\text{max}}[|\mathcal{T}(\alpha,C)|]$ is maximal when $d=1$
\begin{lemma}\label{dcount}
Let $C=[x_{1}]\times[x_{2}]\times\ldots [x_{d}]$ for $x_{1},\ldots,x_{d}\in\mathbb{Z}^{+}$ and $C'=[n]$ for $n=|C|$. It follows that
\[\underset{\alpha,C}{\textup{max}}[|\mathcal{T}(\alpha,C)|]\leq \underset{\alpha',C'}{\textup{max}}[|\mathcal{T}(\alpha',C')|].\]
\end{lemma}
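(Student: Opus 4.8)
The plan is to combine Lemma \ref{d-dim} with a ``flattening'' construction that turns a product of one-dimensional tilings into a single one-dimensional tiling of an interval of the same total size. Fix a contiguous $C=[x_1]\times\cdots\times[x_d]$ and a tile size $\alpha$, and set $n=|C|=x_1\cdots x_d$. By Lemma \ref{d-dim}, $\mathcal{T}(\alpha,C)$ is the disjoint union, over all factorizations $\alpha=\alpha_1\cdots\alpha_d$ with $\alpha_i\in\mathbb{Z}^+$, of the sets $\mathcal{T}((\alpha_1,\ldots,\alpha_d),C)$ (the $\alpha_i$ are recovered from $A$ as $|\textup{proj}_i(A)|$), and each such set is in bijection with $\mathcal{T}(\alpha_1,[x_1])\times\cdots\times\mathcal{T}(\alpha_d,[x_d])$. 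So it suffices to produce an injection from this whole collection of product sets into $\mathcal{T}(\alpha,[n])$: then $|\mathcal{T}(\alpha,C)|\le|\mathcal{T}(\alpha,[n])|\le\max_{\alpha'}|\mathcal{T}(\alpha',[n])|$, and taking the maximum over $\alpha$ and over all contiguous $C$ gives the lemma, since $C'=[|C|]$ is always an admissible choice on the right-hand side.

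The core step is the following claim, which I would prove for two factors and then iterate: for all $m,k\in\mathbb{Z}^+$ and all $\alpha_1,\alpha_2$ there is an injection $\mathcal{T}(\alpha_1,[m])\times\mathcal{T}(\alpha_2,[k])\hookrightarrow\mathcal{T}(\alpha_1\alpha_2,[mk])$ whose images, as $(\alpha_1,\alpha_2)$ ranges over the factorizations of a fixed product, are pairwise disjoint. Given canonical representatives $(A_1,B_1)\in\mathcal{T}(\alpha_1,[m])$ and $(A_2,B_2)\in\mathcal{T}(\alpha_2,[k])$ (so $0\in B_1\cap B_2$, $B_1\subseteq\{0,\ldots,m-1\}$, and $\min A_1=\min A_2=1$), I would send them to $(A,B)$ with $A=(A_1-1)+m\cdot(A_2-1)+1$ and $B=B_1+m\cdot B_2$. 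Using $A_1+B_1=[m]$, $A_2+B_2=[k]$, and the injectivity of $(u,v)\mapsto u+mv$ on $\{0,\ldots,m-1\}\times\mathbb{Z}$, a short computation shows $A+B=[mk]$, that the sizes multiply (so the sum is direct and $|A|=\alpha_1\alpha_2$), and that $(A,B)$ is a canonical representative; hence $(A,B)\in\mathcal{T}(\alpha_1\alpha_2,[mk])$. Injectivity and disjointness hold because $(A_1,B_1,A_2,B_2)$, and in particular $\alpha_1$, can be read back off from $(A,B)$ together with the fixed modulus $m$: one recovers $A_1-1$ as $\{(a-1)\bmod m:a\in A\}$, $A_2$ from the quotients $\lfloor(a-1)/m\rfloor$, $B_1$ as $B\cap\{0,\ldots,m-1\}$, and $B_2$ from the quotients $\lfloor b/m\rfloor$.

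Iterating this claim coordinate by coordinate — first combining $[x_1]$ with $[x_2]$, then the result with $[x_3]$, and so on, using modulus $x_1\cdots x_{j-1}$ at stage $j$ — yields, for each tuple, an injection $\mathcal{T}(\alpha_1,[x_1])\times\cdots\times\mathcal{T}(\alpha_d,[x_d])\hookrightarrow\mathcal{T}(\alpha_1\cdots\alpha_d,[n])$. Since each stage uses a modulus that does not depend on the factorization, the readback property at every stage guarantees that all of these injections taken together are still injective into $\mathcal{T}(\alpha,[n])$: from the final pair and the fixed moduli $x_1,x_1x_2,\ldots$ one recovers, stage by stage, the entire tuple of sizes and all the constituent one-dimensional tilings. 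Combined with the decomposition of the first paragraph, this gives $|\mathcal{T}(\alpha,C)|\le|\mathcal{T}(\alpha,[n])|$ and hence the lemma. I expect the main obstacle to be the bookkeeping in this last step: making the flattening construction's invariants (that it lands among canonical representatives, that the readback of each intermediate size is unambiguous) precise enough that the induction on $d$ closes and no two images from different factorizations of $\alpha$ can collide. Each individual verification is routine, but organizing them so the induction goes through cleanly is where the care is needed.
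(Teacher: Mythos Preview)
Your proposal is correct and follows essentially the same approach as the paper: both invoke Lemma \ref{d-dim} to decompose a $d$-dimensional tiling of $C$ into a product of one-dimensional tilings of the $[x_i]$, and then inject these into $\mathcal{T}(\alpha,[n])$ via the mixed-radix (lexicographic) flattening of $[x_1]\times\cdots\times[x_d]$ onto $[n]$. Your write-up is more explicit---giving the closed-form maps $A=(A_1-1)+m(A_2-1)+1$, $B=B_1+mB_2$ and a readback argument for injectivity across factorizations---while the paper builds $A'$ inductively on the coordinate index, but the underlying construction is the same.
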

\begin{proof}
Recall that we use $[a]$ for the set of integers $1$ through $a$ and $[a,b]$ for the set of integers from $a$ to $b$ (inclusive of $a$ and $b$). We give a mapping from any $(A,B)\in\mathcal{T}(\alpha,C)$ to a distinct $(A',B')\in\mathcal{T}(\alpha',C')$. We know by Lemma \ref{d-dim} that $\prod_{i\in[d]}|\text{proj}_{i}(A)|=|A|$. We define $A'$ in terms of $A$ by specifying the elements of 
\[A'\cap\Big[\prod\limits_{i\in[k]}x_{i}\Big]\]
inductively with respect to $k$ with $k\leq d$. For $k=1$, we let $A'\cap[x_{1}]=\text{proj}_{i}(A)$. Suppose that the elements of
\[A'\cap\Big[\prod\limits_{i\in [k']}x_{i}\Big]\]
are defined for all $k'<k$ and that $1<k$. Then we define the elements of 
\[A'\cap\Big[\prod\limits_{i\in [k-1]}x_{i},\prod\limits_{i\in[k]}x_{i}\Big]\]
by
\[\bigcup\limits_{z\in\text{proj}_{k}(A)}\Bigg(\bigg(A'\cap\Big[\prod\limits_{i\in k-1}x_{i}\Big]\bigg)+(z-1)\Bigg).\]
We can prove by induction on $d$ that there exists a $B'$ such $(A',B')$. For $d=1$, we have that $C=C'$ and $(A,B)=(A',B')$. Suppose the lemma holds up to $d-1$. Then in the case of $d$, our definition of $A'$ from $A$ as well as the fact that
\[\prod\limits_{i\in[d]}|\text{proj}_{i}(A)|=|A|,\]
we have that $|A|=|A'|$. Consider $C'$ condensed into $x_{d}$ meta-points of size
\[\prod\limits_{i\in[d-1]}x_{i}\]
each. We know by the inductive hypothesis that there exists a $B^{*}\subset B'$ such that $A'+B^{*}$ tiles exactly each meta-point with at least one element of $A'$. This tiles exactly the meta-points indexed by elements of $\text{proj}_{d}(A)$. By Lemma \ref{d-dim}, $\text{proj}_{d}(A)$ is such that $(\text{proj}_{d}(A),B')\in\mathcal{T}(|\text{proj}_{d}(A)|,[x_{i}])$ for some $B'$ and thus, the lemma follows.
\end{proof}
Note that one cannot hope to achieve an improvement of Lemma \ref{dcount} that yields equality for all $\alpha$, $n$ and $d$, as evidenced by the following counterexample.
\begin{cor}\label{nodeq}
Let $C=[x_{1}]\times[x_{2}]\times\ldots [x_{d}]$ for $x_{1},\ldots,x_{d}\in\mathbb{Z}^{+}$ and $C'=[n]$ for $n=|C|$. $\exists \alpha,n,d\in\mathbb{Z}^{+}$ such that
\[\underset{\alpha,C}{\textup{max}}[|\mathcal{T}(\alpha,C)|]<\underset{\alpha',C'}{\textup{max}}[|\mathcal{T}(\alpha',C')|].\]
\end{cor}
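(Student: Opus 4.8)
The plan is to produce a single explicit witness rather than argue abstractly. Take $d=2$, $n=6$, and the (non-degenerate) box $C=[2]\times[3]$. I claim that $\max_{\alpha}|\mathcal{T}(\alpha,C)|=1$ while $\max_{\alpha'}|\mathcal{T}(\alpha',[6])|\geq 2$, which yields the strict inequality; in fact already the single tile size $\alpha=2$ works, since $|\mathcal{T}(2,C)|<|\mathcal{T}(2,[6])|$.

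The first step is to evaluate $|\mathcal{T}(\alpha,[2]\times[3])|$ for every $\alpha$ using Lemma \ref{d-dim}. That lemma says every tiling of a box splits as a product of one-dimensional tilings, so $|\mathcal{T}(\alpha,[2]\times[3])|=\sum_{\alpha_1\alpha_2=\alpha}|\mathcal{T}(\alpha_1,[2])|\cdot|\mathcal{T}(\alpha_2,[3])|$, with the sum over ordered factorizations of $\alpha$. Since $[2]$ and $[3]$ have prime length, the only admissible tile sizes are $\alpha_1\in\{1,2\}$ and $\alpha_2\in\{1,3\}$, and for each admissible size a prime-length interval has exactly one tiling (immediate from Lemma \ref{TightBound}/Lemma \ref{NonRecurUpperBound} since such an interval forces $k_r=0$, or by direct inspection). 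Hence every factor in the sum is $0$ or $1$, and checking $\alpha\in\{1,2,3,6\}$ shows exactly one ordered factorization contributes in each case, so $|\mathcal{T}(\alpha,[2]\times[3])|=1$ for $\alpha\in\{1,2,3,6\}$ and $0$ otherwise; thus $\max_\alpha|\mathcal{T}(\alpha,[2]\times[3])|=1$.

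The second step is to exhibit two distinct canonical tilings of $[6]$ with a tile of size $2$, namely $(\{1,2\},\{0,2,4\})$ and $(\{1,4\},\{0,1,2\})$. In both cases $A+B=[6]$, $|A|=2$, $0\in B$, and the translations are nonnegative, so both lie in $\mathcal{T}(2,[6])$, giving $|\mathcal{T}(2,[6])|\geq 2$. Combining the two steps yields $\max_{\alpha'}|\mathcal{T}(\alpha',[6])|\geq 2>1=\max_\alpha|\mathcal{T}(\alpha,[2]\times[3])|$, proving the corollary with $(\alpha,n,d)=(2,6,2)$.

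There is no real obstacle here; the proof is a short computation. The only point needing a moment's care is confirming that the maximum over $\alpha$ on the box side cannot secretly exceed $1$ because of some tile size not of the form considered — and this is closed off entirely by the product structure of Lemma \ref{d-dim}, which forces $|A|=|\text{proj}_1(A)|\cdot|\text{proj}_2(A)|$ with each projection size dividing the corresponding side length, together with the uniqueness of a tiling of a prime-length interval for each admissible tile size.
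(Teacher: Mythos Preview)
Your proof is correct and takes essentially the same approach as the paper: the same witness $C=[2]\times[3]$, $C'=[6]$, and the same two tilings $(\{1,2\},\{0,2,4\})$ and $(\{1,4\},\{0,1,2\})$ of $[6]$ with $\alpha'=2$. If anything, you are more thorough than the paper, which simply asserts that for $\alpha=2$ the box has a unique tiling, whereas you invoke Lemma~\ref{d-dim} to verify that $\max_\alpha|\mathcal{T}(\alpha,[2]\times[3])|=1$ across \emph{all} tile sizes.
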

\begin{proof}
Let $\alpha=2$ and $C=[3]\times[2]$. The only valid tiling of $C'$ is 
\[(\{(1,1),(2,1)\},\{(0,0),(0,1),(0,2)\})\]
where as $C'=[6]$ has the tilings $T_{1}=(\{1,2\},\{0,2,4\})$ and $T_{2}=(\{1,4\},\{0,1,2\})$.
\end{proof} 

\section{Tilings for Non-Contiguous $C$}

For all $\alpha$ and $n$, we hypothesis that $C=[n]$ has at least as many tilings as any $\alpha'$ and $C'$ such that $|C'|=n$. More formally, we claim the following
\begin{conj}\label{[n]isBest}
\[\forall \alpha,n,d\in\mathbb{Z}^{+}, \forall C\subset\mathbb{Z}^{d}\Big[(|C|=n)\implies|\mathcal{T}(\alpha,C)|\leq|\mathcal{T}(\alpha,[n])|\Big].\]
\end{conj}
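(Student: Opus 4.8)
\emph{The plan} is to translate the statement into a question about factorizations of $\{0,1\}$-polynomials, to strip away the dimension $d$ by a Kronecker-type substitution, and then to attack the one-dimensional case by an induction mirroring the recursions of Lemma~\ref{TightBound} and Lemma~\ref{TightBound2}. For the reformulation: given a finite $C\subset\mathbb{Z}^{d}$ write $\chi_{C}(x_{1},\dots,x_{d})=\sum_{c\in C}x^{c}$. If $A+B=C$ and $|A||B|=|C|$ then $\chi_{A}\chi_{B}=\chi_{C}$, and conversely any factorization $\chi_{C}=PQ$ into polynomials with coefficients in $\{0,1\}$ has all product coefficients positive and mass $P(1)Q(1)=\chi_{C}(1)=n$, hence no overlaps, so $(\operatorname{supp}P,\operatorname{supp}Q)$ is a valid tiling. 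Thus, up to the normalization of Definition~\ref{CongClass},
\[
|\mathcal{T}(\alpha,C)|=\big|\{(P,Q):P,Q\ \text{have coefficients in}\ \{0,1\},\ PQ=\chi_{C},\ P(1)=\alpha\}\big|,
\]
and Conjecture~\ref{[n]isBest} becomes the claim that, among all $\{0,1\}$-polynomials with exactly $n$ monomials, $1+x+\cdots+x^{n-1}$ maximizes the number of such factorizations for every fixed $\alpha$.

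\emph{Step 1: reduction to $d=1$.} Translate $C$ so that $\min[\text{proj}_{i}(C)]=0$ for every $i$, and set $D_{i}=\max[\text{proj}_{i}(C)]$. For any $(A,B)\in\mathcal{T}(\alpha,C)$ we have $0\in B$, so $A\subseteq A+B=C$, and for $a\in A$ we have $B\subseteq C-a$, so together with $\text{proj}_{i}(B)\geq 0$ both $A$ and $B$ lie inside the box $\prod_{i}[0,D_{i}]$. The substitution $\phi:x_{i}\mapsto y^{N_{i}}$ with $N_{1}=1$ and $N_{i}=\prod_{j<i}(D_{j}+1)$ is a ring homomorphism that is injective on the monomials of this box, hence sends $\chi_{C}$ to a $\{0,1\}$-polynomial with $n$ monomials and sends distinct factorizations of $\chi_{C}$ to distinct factorizations of $\phi(\chi_{C})$. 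This yields an injection $\mathcal{T}(\alpha,C)\hookrightarrow\mathcal{T}(\alpha,C')$ for $C'=\phi(C)\subset\mathbb{Z}$ with $|C'|=n$, so it suffices to prove the conjecture for $d=1$; in particular the one-dimensional case would imply the conjecture in every dimension.

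\emph{Step 2: the one-dimensional induction.} We would induct on $n$, the case of $n$ prime being immediate since then $\alpha\in\{1,n\}$ and each gives a single canonical tiling of any $C$. For composite $n$ the goal is a descent statement: for a suitable prime $p\mid n$, every $(A,B)\in\mathcal{T}(\alpha,C)$ should be obtained, via the maps of Lemma~\ref{LowerBound} and of $f_{1},f_{2}$ from Lemma~\ref{TightBound2} read backwards, from a tiling of a set of size $n/p$ by a tile of size $\alpha$ or $\alpha/p$. Carrying this out across the prime divisors of $n$ with inclusion--exclusion and applying the induction hypothesis to each smaller set would bound $|\mathcal{T}(\alpha,C)|$ by exactly the quantity that Lemma~\ref{TightBound2} shows equals $|\mathcal{T}(\alpha,[n])|$. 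The engine of the descent is the meta-point analysis inside the proofs of Lemma~\ref{SegmentRiftSize} and Lemma~\ref{RecurUpperBound}, which would have to be re-established for an arbitrary $C$: that $A$ splits into blocks of a common size $k_{s}\mid\alpha$ with $k_{s}\mid k_{r}$ and $k_{s}+k_{r}\mid k_{s}\beta$, so that $C$ is organized into meta-points of size $k_{s}+k_{r}$ that are tiled or untiled as blocks.

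\textbf{Main obstacle.} Contiguity is used precisely in that last point: when $C$ itself has holes, a block of $A$ need not consist of consecutive integers and a rift of $C$ need not be forced to be covered only by translates of the first block, so the uniform meta-point length can genuinely fail. The plan therefore hinges on a gap-aware replacement for ``segment'': one would show that the pattern of holes of $C$ is itself periodic with period dividing $k_{s}+k_{r}$ --- because $\chi_{A}$ together with the forced initial portion of $\chi_{B}$ already determines $C$ on an initial block, and the remaining translates must reproduce $C$'s holes --- so that the quotient set of size $n/p$ is well defined and the descent maps stay injective. Making this rigorous, and checking that no tiling of $C$ is lost or double-counted as $p$ ranges over the prime divisors of $n$ so that the inclusion--exclusion of Lemma~\ref{TightBound2} still furnishes an upper bound (and not merely an identity for $[n]$), is the crux, and is the reason the statement is posed here as a conjecture rather than proved.
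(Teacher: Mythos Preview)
The statement is Conjecture~\ref{[n]isBest}, which the paper explicitly leaves open; there is no proof in the paper to compare your attempt against. The paper establishes only the special case in Lemma~\ref{TileTilings} (extended to $\mathbb{Z}^{d}$ in Corollary~\ref{dTileTiling}) where the set to be tiled is itself a tile of some interval.

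Your Step~1 is correct and is sharper than the paper's own dimension reduction: the Kronecker substitution reduces the full conjecture in $\mathbb{Z}^{d}$ to the one-dimensional case for \emph{arbitrary} finite $C$, whereas the paper's Lemma~\ref{dcount} only compares contiguous $C$ across dimensions. This is a worthwhile stand-alone observation that the paper does not contain.

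Your Step~2, as you yourself acknowledge, has a genuine gap. The segment--rift machinery of Lemmas~\ref{SegmentRiftSize} and~\ref{RecurUpperBound} leans on contiguity in an essential way: the argument that $r_{1}$ can only be covered by translates of $s_{1}$ breaks once $C$ has holes, since later pieces of $A$ may thread through those holes, and there is no evident reason the hole pattern of a general $C$ should be periodic with a period compatible with $k_{s}+k_{r}$. The paper's partial result does not try to salvage the recursion of Lemma~\ref{TightBound2} for non-contiguous $C$; instead Lemma~\ref{TileTilings} takes a different route, defining explicit compression maps $g_{1},g_{2},g_{3}$ that exploit the inherited segment structure any tile of an interval must carry (via Corollaries~\ref{MinRift} and~\ref{InitialOrNone}) to inject $\mathcal{T}(\alpha',A)$ into $\mathcal{T}(\alpha',[\alpha])$. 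That technique is tied to $A$ already being a tile of an interval and does not extend to arbitrary $C$, which is exactly why the full statement remains a conjecture.
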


While we are unable to resolve this conjecture, we make some progress towards this by proving that Conjecture \ref{[n]isBest} holds when $C$ can tile $[n]$ for some $n$. We begin by proving this only for the case of $d=1$ in Lemma \ref{TileTilings}, but we briefly show how to extend this to any $d$ in Corollary \ref{dTileTiling}. Before we proceed with Lemma \ref{TileTilings}, we make two useful structural claims. Corollary \ref{MinRift} clarifies that the first rift in any tiling is the smallest. Corollary \ref{InitialOrNone} is a structural result about valid tilings of $[n]$ that shows the segments only ever appear as the first $k_{s}$ elements of a meta-point (where meta-points are as defined in Definition \ref{meta-points}).

\begin{cor}\label{MinRift}
For any $(A,B)\in\mathcal{T}(\alpha,[n])$ with $j$ rifts we have that $\forall i\in[1,j](|r_{1}|\leq r_{i})$.
\end{cor}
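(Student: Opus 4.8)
\medskip

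The plan is to argue by contradiction using the structural facts already established in Lemma \ref{SegmentRiftSize} together with the necessity arguments from Lemma \ref{RecurUpperBound}. Recall from Lemma \ref{SegmentRiftSize} that every segment of a valid tiling of $[n]$ has size exactly $k_s = |s_1|$, and that the length of every rift is divisible by $k_s$. Recall also from the proof of Lemma \ref{SegmentRiftSize} (and from Lemma \ref{RecurUpperBound}) that once $k_s$ and $k_r = |r_1|$ are fixed, the set $B^{*} = \{x\cdot k_s : x\in[0,k_r/k_s]\}$ must be a subset of $B$, since this is the only way to tile the first rift $r_1$ with translates of $s_1$. The key observation is that $B^{*}$ forces a rigid "meta-point" structure on all of $[n]$: the first $k_s + k_r$ elements form the first meta-point, and the analysis in Lemma \ref{RecurUpperBound} shows that every subsequent block of $k_s + k_r$ consecutive elements is either entirely covered as a meta-point in a meta-segment or is a meta-rift, with no intermediate possibility.

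\medskip

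First I would suppose, for contradiction, that there is some rift $r_i$ with $i > 1$ and $|r_i| < |r_1| = k_r$. By Lemma \ref{SegmentRiftSize}, $k_s \mid |r_i|$, so $|r_i|$ is a positive multiple of $k_s$ strictly less than $k_r$; in particular $|r_i| \le k_r - k_s$. Now consider the boundary between $s_i$ and $s_{i+1}$. Since $|s_i| = k_s$, the set $s_i + k_s$ (a translate of $s_i$ by the element $k_s \in B^{*}$) must lie inside $A+B$, and it occupies the $k_s$ positions immediately following $\max[s_i]$ together with part of $r_i$. Because $|r_i| < k_r$, the translate $s_i + k_s$ overshoots $r_i$ and collides with $s_{i+1}$: concretely, $\min[s_{i+1}] \le \max[s_i] + k_s$, so $(s_i + k_s) \cap s_{i+1} \ne \emptyset$, which would force two distinct translates of the tile to overlap — contradicting that $(A,B)$ is a valid tiling. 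I would phrase this overlap argument in exactly the language used in the second half of the proof of Lemma \ref{SegmentRiftSize}, where the analogous collision is derived for segments of the wrong size.

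\medskip

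The main obstacle I anticipate is making the collision argument fully rigorous without simply re-deriving the entire meta-point decomposition: one must be careful that $s_i + k_s$ really does extend past all of $r_i$ when $|r_i| < k_r$, i.e. that the "gap" after $s_i$ is genuinely $r_i$ and not something larger, and that $k_s \in B$ is indeed available to produce the offending translate regardless of how far along $[n]$ the rift $r_i$ sits. Both of these follow from the fact that $B^{*} \subseteq B$ and from the meta-point rigidity of Lemma \ref{RecurUpperBound} (every segment begins at the start of a meta-point and has length exactly $k_s$), so I would invoke those results rather than reproving them. An alternative, cleaner route — which I would mention as a remark — is to appeal directly to Lemma \ref{RecurUpperBound}: there it is shown that a valid tiling of $[n]$ with first segment/rift sizes $(k_s, k_r)$ corresponds to a valid tiling of $[n/(k_s+k_r)]$ in which meta-points play the role of points, and in that correspondence every rift of the original tiling is $k_s$ times (the analogue of) a rift of the smaller tiling plus the fixed "$+k_r/k_s$" contribution that every rift inherits; since rift sizes in the smaller tiling are nonnegative, the first rift (size exactly $k_r$, corresponding to the minimal contribution) is the smallest. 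Either way the corollary follows, and I expect the contradiction-via-overlap version to be the shortest to write out.
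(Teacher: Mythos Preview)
Your overall strategy---contradiction via a forced overlap, using that $B^{*}\subseteq B$---is exactly the paper's approach. However, your specific collision argument has a gap. You translate $s_i$ by $k_s$ and claim that $s_i+k_s$ overshoots $r_i$ whenever $|r_i|<k_r$. But $s_i+k_s$ only occupies the $k_s$ positions $[\max[s_i]+1,\max[s_i]+k_s]$, so it overshoots $r_i$ only when $|r_i|<k_s$, not when $|r_i|<k_r$. For instance, if $k_r=2k_s$ and $|r_i|=k_s$ (which is compatible with $k_s\mid|r_i|$ and $|r_i|<k_r$), then $s_i+k_s$ lands entirely inside $r_i$ and never touches $s_{i+1}$.

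The fix is to use a larger element of $B^{*}$---indeed the paper uses the full $B^{*}$ and argues $(s_i+B^{*})\cap s_{i+1}\neq\emptyset$. Concretely, $k_r=(k_r/k_s)\cdot k_s\in B^{*}$, and $s_i+k_r$ occupies $[\max[s_i]+k_r-k_s+1,\max[s_i]+k_r]$; since $\min[s_{i+1}]=\max[s_i]+|r_i|+1\leq\max[s_i]+k_r$, this translate does meet $s_{i+1}$, producing the desired double-cover. Your alternative route via the meta-point correspondence of Lemma~\ref{RecurUpperBound} is correct in spirit and would also work (every segment starts at the first position of a meta-point, so consecutive segments are at least $k_s+k_r$ apart, forcing $|r_i|\geq k_r$), but as you note, it requires invoking more machinery than the direct overlap argument.
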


\begin{proof}
Suppose not, then consider $B^{*}\subset B$ such that $B^{*}$ is the minimum size subset set of $B$ for which $[1,k_{s}+k_{r}]\subset A+B^{*}$ holds. We know from Lemma \ref{RecurUpperBound} that elements in $[1,k_{s}+k_{r}]$ must be covered by shifts of elements in $s_{1}$ by elements in $B^{*}$. Let $r_{i}$ be such that $|r_{i}|<|r_{1}|$, then $s_{i}+B^{*}\cap s_{i+1}\neq\emptyset$, which contradicts our assumption that $(A,B)\in\mathcal{T}(\alpha,[n])$.
\end{proof}

\begin{cor}\label{InitialOrNone}
For all $(A,B)\in\mathcal{T}(\alpha,[n])$, we have that 
\[\exists! M\subset\bigg[0,\frac{n}{(k_{r}/k_{s})+1}-1\bigg]\bigg(A=\bigcup\limits_{m\in M}([1,k_{s}]+\{m(k_{s}+k_{r})\})\bigg).\]
Put another way, there is a unique subset of the meta-points of $C$ with respect to $A$ such that the elements of $A$ are the first $k_{s}$ elements of these meta-points.
\end{cor}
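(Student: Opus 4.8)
The plan is to show that the structure of $A$ is completely forced once one fixes which meta-points of $C$ are ``occupied,'' and that this forcing is exactly the content of Lemma~\ref{SegmentRiftSize} together with the analysis in Lemma~\ref{RecurUpperBound}. First I would recall from Lemma~\ref{SegmentRiftSize} that every segment of $(A,B)$ has size exactly $k_s$ and every rift has size divisible by $k_s$, and moreover (from the proof of Lemma~\ref{RecurUpperBound}) that $B^{*}=\{x\cdot k_s : x\in[0,k_r/k_s]\}\subseteq B$, so that $(s_1\cup s_2)+B^{*}$ occupies exactly $[1,2(k_s+k_r)]$ whenever there is a second segment. The key observation is that $B^{*}$ forces a rigid periodic structure of period $k_s+k_r$: a segment $s_i$, once it appears, must begin at a position congruent to $1 \bmod (k_s+k_r)$, because otherwise a translate $s_1 + b$ with $b\in B^{*}$ would either collide with $s_i$ or fail to leave room for it, contradicting that we are looking at a valid tiling (this is precisely the case analysis of ``$2(k_s+k_r)+1 \in s_1+b_i$ vs.\ $\in s_3$'' from Lemma~\ref{RecurUpperBound}, iterated).

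The main step, then, is: for each $m \in \big[0, \tfrac{n}{(k_r/k_s)+1} - 1\big]$ — i.e.\ for the $m^{\text{th}}$ meta-point $m^{*} = [m(k_s+k_r)+1, m(k_s+k_r)+k_s+k_r]$ — define $m \in M$ if and only if $m(k_s+k_r)+1 \in A$. I would argue that with this definition, $A \cap m^{*}$ equals exactly $[1,k_s] + \{m(k_s+k_r)\}$ when $m \in M$ and is empty when $m \notin M$. The ``when $m\in M$'' direction follows because the first element of $m^{*}$ being in $A$ means it starts a segment (it cannot be in the interior of one, since segments have length $k_s$ and by induction all earlier segments start at positions $\equiv 1 \bmod (k_s+k_r)$ and have already terminated by the end of the previous meta-point, the rift having length a multiple of $k_s$ that, combined with the periodicity forced by $B^{*}$, fills out exactly to $m(k_s+k_r)$); that segment has length exactly $k_s$ by Lemma~\ref{SegmentRiftSize}, so it is $[m(k_s+k_r)+1, m(k_s+k_r)+k_s]$. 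The ``when $m\notin M$'' direction: if $m(k_s+k_r)+1 \notin A$ but some later element of $m^{*}$ were in $A$, that element would start a segment at a position not $\equiv 1 \bmod (k_s+k_r)$, contradicting the periodicity; so $A\cap m^{*} = \emptyset$. Uniqueness of $M$ is then immediate: $M$ is determined by $A$ via the stated membership rule, and conversely $A$ is determined by $M$ via the displayed union, so the correspondence is a bijection and in particular $M$ is unique.

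I expect the main obstacle to be cleanly establishing the periodicity claim — that every segment begins at a position $\equiv 1 \bmod (k_s+k_r)$ — with full rigor rather than by appeal to the informal ``meta-point'' discussion. The cleanest route is induction on meta-point index: assuming $A$ restricted to $[1, m(k_s+k_r)]$ is a union of blocks $[1,k_s]+\{m'(k_s+k_r)\}$ over some $M' \subseteq [0,m-1]$, and that $B^{*} \subseteq B$, one shows that the tiling of position $m(k_s+k_r)+1$ leaves only the two options ``it lies in a new segment'' or ``it is covered by $s_1 + b$ for some $b \in B$'' — and the latter, by the length-counting argument in Lemma~\ref{RecurUpperBound} (a translate $s_1+b$ together with $B^{*}$ occupies a run of $k_s+k_r$ positions, so $b$ itself must be $\equiv 0 \bmod (k_s+k_r)$), again preserves the invariant. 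Care is needed at the boundary of the range of $m$ (the upper limit $\tfrac{n}{(k_r/k_s)+1}-1$ is just $\tfrac{n}{k_s+k_r}\cdot\tfrac{k_s}{\,?\,}$ rewritten; I would double-check that $\tfrac{n}{(k_r/k_s)+1} = \tfrac{n k_s}{k_s+k_r}$ equals $\beta_{\text{eff}}\cdot k_s$, the number of meta-points, matching the count used implicitly in Lemma~\ref{RecurUpperBound}), but this is routine arithmetic once the structural claim is in hand. The existence of $M$ is essentially free given Lemma~\ref{d-dim}'s one-dimensional base case and Lemma~\ref{RecurUpperBound}; the real work is the ``only the first $k_s$ elements'' rigidity, and uniqueness follows formally from the bijection.
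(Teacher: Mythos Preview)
Your proposal is correct and follows essentially the same approach as the paper: both appeal to the periodicity established in the earlier lemmas (the paper cites Lemma~\ref{LowerBound}, you cite Lemma~\ref{SegmentRiftSize} and Lemma~\ref{RecurUpperBound}) to conclude that every segment of $A$ begins at a position $\equiv 1 \bmod (k_s+k_r)$, then read off $M$ from which meta-points are occupied. The paper's proof is a three-sentence sketch, whereas you spell out the induction on meta-point index and the uniqueness bijection explicitly; your added rigor is welcome but not a different route. Your hesitation about the upper endpoint is well-placed: $\tfrac{n}{(k_r/k_s)+1}=\tfrac{nk_s}{k_s+k_r}$, while the number of meta-points is $\tfrac{n}{k_s+k_r}$, so the stated interval appears to be off by a factor of $k_s$ in the paper's formulation---this is an issue with the statement rather than with your argument.
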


\begin{proof}
Notice that, by the definition of $k_{s}$, it is certainly the case that $[1,k_{s}]\subset A$. As is shown in the arguments of Lemma \ref{LowerBound}, the remaining segments of $A$ outside of $s_{1}$ must begin at intervals of $k_{s}+k_{r}$. The upper bound on the interval of which $M$ is a subset then comes from the fact that an element of $A$ beyond this point would be translated by an element of $B$ to a value greater than $n$, making the tiling invalid.
\end{proof}
With this, we can now proceed with proving the main lemma of the section.
\begin{lemma}\label{TileTilings}  
For any $\alpha,\alpha', n\in\mathbb{Z}^{+}$ and any $(A,B)\in\mathcal{T}(\alpha,[n])$, we have that $|\mathcal{T}(\alpha',A)|\leq|\mathcal{T}(\alpha',[\alpha])|$.
\end{lemma}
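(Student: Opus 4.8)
The plan is to show that any tiling of $A$ by a tile of size $\alpha'$ can be ``collapsed'' to a tiling of $[\alpha]$ by a tile of size $\alpha'$, and that distinct tilings of $A$ collapse to distinct tilings of $[\alpha]$. The key structural input is Corollary~\ref{InitialOrNone}: as a subset of $[n]$, the tile $A$ consists precisely of the first $k_s$ elements of each meta-point (of size $k_s+k_r$) indexed by some set $M$. Intuitively, $A$ is an ``inflated'' copy of a contiguous interval $[\alpha/k_s]$ where each point has been expanded into a block of $k_s$ consecutive points, and separated by gaps of size $k_r$. The map I would build sends a tiling $(A'',B'')$ of $A$ to the tiling of $[\alpha]$ obtained by deleting the gaps (the $k_r$ missing elements of each meta-point) and re-indexing the surviving $\alpha$ elements of $[n]$ as $1,\dots,\alpha$ in order; $A''$ and $B''$ are pushed through this re-indexing. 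Call this relabeling $\phi\colon A\to[\alpha]$, the unique order-preserving bijection.

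\textbf{Key steps, in order.} First I would record the precise form of $A$ from Corollary~\ref{InitialOrNone} and define $\phi$. Second, I would show that if $(A'',B'')\in\mathcal{T}(\alpha',A)$ then $(\phi(A''),\phi(B''))$ — more carefully, $\phi$ applied to $A''$ and the induced shift set on $B''$ — is a valid tiling of $[\alpha]$: since $A''+B''=A$ exactly (each element of $A$ covered once) and $\phi$ is an order-isomorphism between $A$ and $[\alpha]$, the sumset structure is preserved, i.e. $\phi(A'')+B''_\phi=[\alpha]$ with $|\phi(A'')|=\alpha'$ and $|[\alpha]|=\alpha'|B''_\phi|$. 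The point is that Minkowski-sum incidences only depend on which differences $a''-a''{}'$ coincide with which shift differences, and on the ambient set being tiled, all of which transfer under an order-isomorphism of the \emph{covered} set even though $\phi$ is not globally affine. Third, I would check injectivity: distinct canonical representatives $(A''_1,B''_1)\neq(A''_2,B''_2)$ have $A''_1\neq A''_2$ (a tile determines its translation set), hence $\phi(A''_1)\neq\phi(A''_2)$ since $\phi$ is a bijection, so the images are distinct canonical tilings of $[\alpha]$. Fourth, handle the canonical-representative bookkeeping: after applying $\phi$ one must re-translate $B''_\phi$ so that $0\in B''_\phi$ and all coordinates are nonnegative, which does not affect the count.

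\textbf{Main obstacle.} The delicate point is Step~2: verifying that applying the non-affine relabeling $\phi$ to $A''$ genuinely produces a set whose Minkowski sum with the correspondingly relabeled shift set is all of $[\alpha]$. The worry is that two elements $a,b\in A''$ with $a-b$ equal to some $b''-b''{}'$ could, after relabeling, fail to line up, or that a shift could push part of $A''$ outside $A$ into a gap. I expect the clean way around this is to argue at the level of the shift set $B''$ directly: every shift in $B''$ maps $A''$ into $A$, and because segments of $A$ have length exactly $k_s$ and gaps length $k_r$ with the meta-point structure rigid (Corollary~\ref{InitialOrNone}, together with Lemma~\ref{SegmentRiftSize} applied to $A$ viewed as tiling $[n]$), any translate of the block-structured tile $A''$ that lands inside $A$ must respect the block structure — it cannot straddle a gap. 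Hence $\phi$ carries each such translate to a translate inside $[\alpha]$, and collapsing all translates simultaneously shows $\phi(A'')+B''_\phi=[\alpha]$. I would also note the degenerate cases ($\alpha'\nmid\alpha$ gives $|\mathcal{T}(\alpha',[\alpha])|\geq 0$ and the left side is $0$ as well; $k_r=0$ makes $A=[\alpha]$ and the statement is trivial) to make sure the argument is complete. Finally, a remark that the same collapsing map, applied dimension-by-dimension using Lemma~\ref{d-dim}, yields the promised Corollary~\ref{dTileTiling} for general $d$.
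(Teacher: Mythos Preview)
Your map $\phi$ is the same object as the paper's: the paper builds a compression $g_2$ that removes the smallest rift size of $A$ and then iterates it $\ell$ times (once per distinct rift length) to get $g_3=g_2^{\ell}$, and this composite is exactly your order-preserving bijection $A\to[\alpha]$. So the two proofs aim to show the same injection $\mathcal{T}(\alpha',A)\hookrightarrow\mathcal{T}(\alpha',[\alpha])$; the difference is how correctness of that injection is argued.

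The gap is in your Step~2. Your proposed resolution (``a translate of $A''$ inside $A$ cannot straddle a gap, hence $\phi$ carries translates to translates'') only addresses the \emph{finest} level of $A$'s structure, the $k_s$-blocks separated by the first rift size $k_r$. But $A$ in general has a nested hierarchy of rift lengths (e.g.\ $A=\{1,2,5,6,17,18,21,22\}$ has rifts of sizes $2,10,2$), and what you need is that for every $b\in B''$ and every $a_1,a_2\in A''$ one has $|A\cap(a_1,a_1+b]|=|A\cap(a_2,a_2+b]|$ (this is what ``$\phi(A''+b)$ is a single translate of $\phi(A'')$'' unpacks to). The ``no straddling at the $k_s$ level'' observation does not give this; the equality depends on the full recursive structure of $A$, not just the bottom layer. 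So as written, your argument does not establish that $(\phi(A''),B''_\phi)$ is a valid tiling of $[\alpha]$.

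The paper handles exactly this by proving the one-level statement (your ``cannot straddle a gap'' at the $k_s$ scale) and then \emph{iterating}: each application of $g_2$ collapses one rift length and lands you in $\mathcal{T}(\alpha',A')$ for a strictly more contiguous $A'$; after $\ell$ steps you reach $[\alpha]$. In other words, the inductive/iterative framing is not a stylistic choice but the mechanism that propagates your segment-level observation through all the nested meta-point scales. If you want a one-shot proof, you would need to prove directly the interval-count identity above, and the natural way to do that is again by induction on the number of distinct rift lengths --- which is the paper's argument.
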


\begin{proof}
We prove the lemma via the following steps:
\begin{enumerate}
\item We define a compression function $f$ for taking certain well structured sets of integers and outputting a new set of integers with larger groups of contiguous elements (i.e. larger segment size).
\item We define an injective function $g_{1}$ that maps any $(A,B)\in\mathcal{T}(\alpha,[n])$ such that $A$ has at least one rift, to some $(A',B')\in\mathcal{T}(\alpha,[n/(k_{r}/k_{s})+1)])$.
\item We define an injective function $g_{2}$ that maps any $(\mathcal{A},\mathcal{B})\in\mathcal{T}(\alpha',A)$ to some $(\mathcal{A}',\mathcal{B}')\in\mathcal{T}(\alpha',[A'])$.
\item We an injective function $g_{3}$ thats maps any $(\mathcal{A},\mathcal{B})\in(\mathcal{T}(\alpha',A)$ to an element of $\mathcal{T}(\alpha',[\alpha])$, where $g_{3}$ simply applies $g_{2}$ to $(\mathcal{A},\mathcal{B})$ a number of times based upon $g(A,B)$ for $(A,B)\in\mathcal{T}(\alpha,[n])$.
\end{enumerate}
Define the function $f(A,C)$, where there exists a $B$ such that $(A,B)\in\mathcal{T}(\alpha,C)$, to be such that
\[f(A,C)=\{a-k_{s}\lfloor a/(k_{s}+k_{r})\rfloor:a\in A)|\}.\]
Note that, while $C$ is not explicitly mentioned on the right hand side of the equality, it is implicit by the use $k_{s}$ and $k_{r}$ and thus, $C$ is in fact required in the input for the function to be well defined. Let $C$ be a finite contiguous set of integers. By corollary \ref{MinRift}, the intuition as to the effect of $f$ is that it takes the first $k_{s}$ elements of $A$ from each meta-point of $C$ with respect to $A$ and makes them contiguous by removing elements besides the first $k_{s}$ of each meta-point, followed by ``shifting" those remaining appropriately (which formalize in a moment).

Before defining $g_{1}$ and $g_{2}$, we define $\tilde{B}$ to be such that
\[\tilde{B}=\Big\{b_{i}\in B:i=1\text{ mod } \frac{k_{r}}{k_{s}}+1\Big\}.\]
By corollary \ref{InitialOrNone}, $B$ is defined in such a way that the elements of $A+\tilde{B}$ cover exactly the first $k_{s}$ elements of each meta-point of $[n]$ with respect to $A$. More formally, we have that 
\[(A,\tilde{B})\in\mathcal{T}\Bigg(\alpha,\bigcup_{m\in [0,\frac{n}{(k_{r}/k_{s})+1}-1]}([1,k_{s}]+\{m(k_{s}+k_{r})\})\Bigg).\]
We can now define both $g_{1}$ and $g_{2}$. Let $g_{1}(A,B)=(f(A,[n]),f(\tilde{B},[0,n-1])=(A',B')$. By Corollary \ref{InitialOrNone}, this transformation is such that $|A'|=|A|$ and $|B'|=|\tilde{B}|=k_{s}|B|/k_{r}$. This compression of the element of both $A$ and $\tilde{B}$ leads to $A'$ and $B'$ such that $(A',B')\in\mathcal{T}(\alpha,[n/(k_{r}/k_{s})+1)])$. Similarly we define $g'(\mathcal{A},\mathcal{B})=(f(\mathcal{A},[n]),f(\mathcal{B},[0,n-1])=(\mathcal{A}',\mathcal{B}')$. In the case of $g_{2}$, we now have that $|\mathcal{A}'|=|\mathcal{A}|$ and $|\mathcal{B}'|=|\mathcal{B}|$. Further, We have that $(\mathcal{A}',\mathcal{B}')\in\mathcal{T}(\alpha',A')$ where $\alpha'=|\mathcal{A}|$.

We now define $g_{3}$ from $g_{2}$, using $g_{1}$ to show its correctness. Suppose $A$ has $\ell$ distinct lengths of rift. Certainly, $\ell$ is finite and less than or equal to the total number of rifts in $A$. Notice though that applying $g_{1}$ to $(A,B)$ results in an $(A',B')$ with one less distinct rift length. Thus, it follows that $g_{1}^{\ell}(A,B)=([\alpha],\{0\})$. Knowing this, observe that we can then apply $g_{2}$ the same number of times (i.e. $\ell$ times) to $(\mathcal{A},\mathcal{B})\in\mathcal{T}(\alpha',A)$, giving us $g_{2}^{\ell}(\mathcal{A},\mathcal{B})=(\mathcal{A}',\mathcal{B}')$ such that $\mathcal{A}'+\mathcal{B}'=[\alpha]$. Thus, we define $g_{3}$ to be $g_{2}^{\ell}(\mathcal{A},\mathcal{B})$ where $\ell$ is the number of rifts in $\mathcal{A}+\mathcal{B}$. As $g_{2}$ is injective, it follows that distinct tilings $(\mathcal{A},\mathcal{B})\in\mathcal{T}(\alpha',A)$ map to distinct elements of $\mathcal{T}(\alpha',[\alpha])$ by $g_{3}$. Thus, the lemma follows.
\end{proof}
We now generalize Lemma \ref{TileTilings} to arbitrary dimension.
\begin{cor}\label{dTileTiling}
Let $C=C_{1}\times C_{2}\times\ldots\times C_{d}$ for $C_{i}$ such that $C_{i}=[n_{i}]$ for some $n_{i}\in\mathbb{Z}^{+}$. For all $(A,B)\in|\mathcal{T}((\alpha_{1},\alpha_{2},\ldots,\alpha_{d}),C)|$ and any set of $\alpha_{i}'\in\mathbb{Z}^{+}$ for all $i$, we have that 
\[|\mathcal{T}((\alpha'_{1},\alpha'_{2},\ldots,\alpha'_{d}),A)|\leq|\mathcal{T}((\alpha'_{1},\alpha'_{2},\ldots,\alpha'_{d}),[\alpha_{1}]\times[\alpha_{2}]\times\ldots\times[\alpha_{d})])|.\]
\end{cor}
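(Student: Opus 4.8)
The plan is to reduce the $d$-dimensional statement to $d$ separate applications of Lemma \ref{TileTilings}, using Lemma \ref{d-dim} to pass between product sets and their one-dimensional factors. First, apply Lemma \ref{d-dim} to $C = C_1 \times \cdots \times C_d$: since $(A,B) \in \mathcal{T}((\alpha_1,\ldots,\alpha_d),C)$, we obtain $A = A_1 \times \cdots \times A_d$ with $(A_i,B_i) \in \mathcal{T}(\alpha_i,[n_i])$ for each $i$ (so in particular $|A_i| = \alpha_i$). Applying Lemma \ref{d-dim} a second time, now to the contiguous set $[\alpha_1] \times \cdots \times [\alpha_d]$, gives $|\mathcal{T}((\alpha_1',\ldots,\alpha_d'),[\alpha_1]\times\cdots\times[\alpha_d])| = \prod_{i=1}^{d} |\mathcal{T}(\alpha_i',[\alpha_i])|$.

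The core step is to show that tilings of the product $A = A_1 \times \cdots \times A_d$ factor in the same way: every $(\mathcal{A},\mathcal{B}) \in \mathcal{T}((\alpha_1',\ldots,\alpha_d'),A)$ satisfies $\mathcal{A} = \mathcal{A}_1 \times \cdots \times \mathcal{A}_d$ and $\mathcal{B} = \mathcal{B}_1 \times \cdots \times \mathcal{B}_d$ with $(\mathcal{A}_i,\mathcal{B}_i) \in \mathcal{T}(\alpha_i',A_i)$, and conversely any such product is a valid tiling; after the routine normalization to canonical representatives this yields $|\mathcal{T}((\alpha_1',\ldots,\alpha_d'),A)| = \prod_{i=1}^{d} |\mathcal{T}(\alpha_i',A_i)|$. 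Granting this, the corollary follows by chaining $|\mathcal{T}((\alpha_1',\ldots,\alpha_d'),A)| = \prod_i |\mathcal{T}(\alpha_i',A_i)| \le \prod_i |\mathcal{T}(\alpha_i',[\alpha_i])| = |\mathcal{T}((\alpha_1',\ldots,\alpha_d'),[\alpha_1]\times\cdots\times[\alpha_d])|$, where the inequality is Lemma \ref{TileTilings} applied coordinate by coordinate (using $|A_i| = \alpha_i$) and the two equalities are the invocations of Lemma \ref{d-dim} above.

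The main obstacle is exactly this factorization, since Lemma \ref{d-dim} as stated requires each factor of the tiled set to be a contiguous interval, whereas the $A_i$ here are only tiles of intervals. I would obtain it by re-running the induction in the proof of Lemma \ref{d-dim}, whose inductive step uses only the one-dimensional overlap clause: for a valid tiling $(\mathcal{A}_i,\mathcal{B}_i)$ of $A_i$ and every $m \in \mathbb{Z}$, either $|\mathcal{A}_i \cap (\mathcal{B}_i+m)| = 1$ or $|A_i \cap (\mathcal{B}_i+m)| < |\mathcal{B}_i+m|$. This is the analogue of Lemma \ref{Overlap} with $[n]$ replaced by $A_i$; because $A_i$ is a tile of $[n_i]$, Corollary \ref{InitialOrNone} shows that $A_i$ has precisely the meta-point structure exploited in the proof of Lemma \ref{Overlap}, so that proof's induction on $\Omega^{*}$ carries over with only cosmetic changes (alternatively, one can transport the overlap property of Lemma \ref{Overlap} back along the compression map $g_1$ from the proof of Lemma \ref{TileTilings}, which sends a tiling of $A_i$ to a tiling of a genuine interval). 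Once the overlap clause is available for each $A_i$, the argument of Lemma \ref{d-dim} applies verbatim to $A = A_1 \times \cdots \times A_d$, producing the factorization and hence the corollary.
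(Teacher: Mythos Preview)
Your argument is correct but takes a more elaborate route than the paper. The paper's one-line proof simply applies the compression map $g_{3}$ from the proof of Lemma~\ref{TileTilings} coordinate by coordinate: since $A=A_{1}\times\cdots\times A_{d}$ by Lemma~\ref{d-dim}, one squeezes each $A_{i}$ down to $[\alpha_{i}]$ in turn, producing directly an injection from $\mathcal{T}((\alpha'_{1},\ldots,\alpha'_{d}),A)$ into $\mathcal{T}((\alpha'_{1},\ldots,\alpha'_{d}),[\alpha_{1}]\times\cdots\times[\alpha_{d}])$, without ever needing to factor a tiling of the non-contiguous set $A$. You instead aim for the stronger intermediate equality $|\mathcal{T}((\alpha'_{1},\ldots,\alpha'_{d}),A)|=\prod_{i}|\mathcal{T}(\alpha'_{i},A_{i})|$, which forces you to extend Lemma~\ref{d-dim} (and its supporting overlap Lemma~\ref{Overlap}) to products whose factors are merely tiles of intervals rather than intervals themselves, and only then invoke Lemma~\ref{TileTilings} in each coordinate. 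This buys you a structural fact the paper never states---tilings of such $A$ themselves factor---at the price of rerunning the induction of Lemma~\ref{d-dim} over a non-contiguous index set $A_{d+1}$, which is more than ``cosmetic'' even if ultimately routine. The paper's route sidesteps all of this because an injection suffices for the inequality, and $g_{3}$ already furnishes one.
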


\begin{proof}
Applying the function $g_{3}$ as in Lemma \ref{TileTilings} to each dimension one at a time injectivley maps any element of $\mathcal{T}((\alpha'_{1},\alpha'_{2},\ldots,\alpha'_{d}),A)$ to an element of $\mathcal{T}((\alpha'_{1},\alpha'_{2},\ldots,\alpha'_{d}),[\alpha_{1}]\times[\alpha_{2}]\times\ldots\times[\alpha_{d})])$.
\end{proof}

\section{Acknowledgments}
I want to thank Neng Huang for numerous discussions on the above results, during which he caught several errors and gave recommendations as to how to improve clarity. I also wish to thank David Cash and Andy Drucker for reading several drafts of this work and providing valuable feedback. Further, I wish to thank the anonymous reviewer who's excellent feedback helped me improve this work in several ways, including the clarification of this works connections to prior results on tilings.

\end{document}